\newcommand{\AC}{\mathsf{AC}}			
\newcommand{\AComega}{{\mathsf{AC}}_\omega}	
\newcommand{\DC}{\mathsf{DC}}			
\newcommand{\forcing}[1]{\mathbf{ #1}} 	
\DeclareMathOperator{\Cl}{Cl} 		
\newcommand{\Nbhd}{{\boldsymbol N}\!} 
\newcommand{\FORALL}[1]{\forall {#1} \, }
\newcommand{\EXISTS}[1]{\exists {#1} \, }
\newcommand{\Q}{\mathbb{Q}}	
\newcommand{\R}{\mathbb{R}}	
\newcommand{\ZF}{\mathsf{ZF}} 		
\newcommand{\Pow}{\mathscr{P}}		
\newcommand{\IMPLIES}{\Rightarrow }
\newcommand{\IFF}{\Leftrightarrow}
\newcommand{\AND}{\mathbin{\, \wedge \,}}
\newcommand{\markdef}[1]{\textbf{#1}} 
\newcommand{\Mid}{\boldsymbol\mid}			
\newcommand{\setof}[2]{\mathopen \{{#1}\Mid{#2} \mathclose\}} 
\newcommand{\setofLR}[2]{\left \{{#1} \Mid {#2} \right\}} 
\newcommand{\set}[1]{\mathopen \{ {#1} \mathclose \}} 
\newcommand{\setLR}[1]{\left \{ {#1} \right \}} 
\newcommand{\seqof}[2]{\mathopen \langle #1 \Mid #2 \mathclose \rangle} 
\newcommand{\seq}[1]{\mathopen \langle #1 \mathclose \rangle}	
\newcommand{\seqLR}[1]{\left \langle #1 \right \rangle}	
\DeclareMathOperator{\lh}{lh}				
\newcommand{\conc}{{}^\smallfrown}		
\DeclareMathOperator{\dom}{dom} 		
\DeclareMathOperator{\ran}{ran} 		
\newcommand{\Vv}{\mathord{\mathrm{V}}}		
\newcommand{\Ll}{\mathord{\mathrm{L}}}		
\newcommand{\Gdelta}{\mathbf{G}_\delta}	
\newcommand{\Fsigma}{\mathbf{F}_\sigma}	
\newcommand{\Gdeltadelta}{\mathbf{G}_{\delta\delta}}	
\newcommand{\Fsigmasigma}{\mathbf{F}_{\sigma\sigma}}	
\newcommand{\bSigma}{\boldsymbol{\Sigma}}
\newcommand{\bPi}{\boldsymbol{\Pi}}
\newcommand{\bDelta}{\boldsymbol{\Delta}}
\DeclareMathOperator{\Aut}{Aut} 
\newcommand{\forces}{\Vdash}
\newcommand{\HS}{\mathsf{HS}} 
\newcommand{\FL}{\mathsf{FL}} 
\DeclareMathOperator{\Fix}{Fix}
\newcommand{\Models}{\vDash}
\renewcommand{\restriction}{\mathop{\upharpoonright}}
\newenvironment{enumerate-(a)}{\begin{enumerate}[label={\upshape (\alph*)}]}{\end{enumerate}}
\theoremstyle{plain}
\newtheorem{theorem}{Theorem}[section]
\newtheorem{question}[theorem]{Question}
\newtheorem{proposition}[theorem]{Proposition}
\newtheorem{lemma}[theorem]{Lemma}
\newtheorem{corollary}[theorem]{Corollary}
\newtheorem{claim}{Claim}[theorem]
\newtheorem{subclaim}{Subclaim}[claim]
\theoremstyle{definition}
\newtheorem{definition}[theorem]{Definition}
\theoremstyle{remark}
\newtheorem{remark}[theorem]{Remark}
\newtheorem*{remarks*}{Remarks}
\begin{document}

\title{Does \( \DC \) imply \( \AComega \), uniformly?}
\author{Alessandro Andretta}
\address{Università degli Studi di Torino, Dipartimento di Matematica ``G. Peano", Via Carlo Alberto 10, 10123 Torino, Italy}
\curraddr{}
\email{alessandro.andretta@unito.it}
\author{Lorenzo Notaro}
\address{Università degli Studi di Torino, Dipartimento di Matematica ``G. Peano", Via Carlo Alberto 10, 10123 Torino, Italy}
\curraddr{}
\email{lorenzo.notaro@unito.it}

\thanks{The authors are grateful to the anonymous referee for useful comments and suggestions. 
This research was supported by the project PRIN 2017 ``Mathematical Logic: models, sets, computability", prot. 2017NWTM8R. 
The second author would also like to acknowledge INdAM for the financial support.}

\subjclass[2020]{Primary 03E25, Secondary 03E35, 03E40}
\keywords{symmetric extension, iterated symmetric extension, axiom of choice, dependent choice, countable choice}

\begin{abstract}

The Axiom of Dependent Choice \( \DC \) and the Axiom of Countable Choice \( \AComega \) are two weak forms of the Axiom of Choice that can be stated for a specific set: \( \DC ( X ) \) asserts that any total binary relation on \( X \) has an infinite chain, while \( \AComega ( X ) \) asserts that any countable collection of nonempty subsets of \( X \) has a choice function.
It is well-known that \( \DC \IMPLIES \AComega \). 
We study for which sets and under which hypotheses \( \DC ( X ) \IMPLIES \AComega ( X ) \), and then we show it is consistent with \( \ZF \) that there is a set \( A \subseteq \R \) for which \( \DC ( A ) \) holds, but \( \AComega ( A ) \) fails.
\end{abstract}
\maketitle

\section{Introduction}
The \markdef{Axiom of Choice} \( \AC \) is the statement \( \FORALL{X} \AC ( X ) \), where 
\begin{equation}
 X \neq \emptyset \IMPLIES \EXISTS{f \colon \Pow ( X ) \to X}\, \FORALL{ A \subseteq X} ( A \neq \emptyset \IMPLIES F ( A ) \in A ) . \tag{\( \AC ( X ) \)}
\end{equation}
The function \( f \) is a choice function for \( X \).
Observe that \( \AC ( X ) \) if and only if ``\( X \) can be well-ordered''.

By restricting the choice function we have that \( \AC ( X ) \IMPLIES \AC_I ( X ) \), where 
\begin{equation*}
\parbox[c]{0.8\textwidth}{For any sequence \( ( A_i )_{ i \in I} \) of nonempty subsets of \( X \) there is \( ( a_i )_{ i \in I} \) such that \( \FORALL{ i \in I } ( a_i \in A_i ) \).} \tag{\( \AC_I ( X ) \)}
\end{equation*}
Of particular interest is the case when \( I = \omega \): the \markdef{Axiom of Countable Choice} \( \AComega \) is \( \FORALL{X} \AComega ( X ) \).
(In the literature \( \mathsf{CC} \) is another name for this axiom.)

Let \( R \) be a binary relation on a set \( X \).
\begin{itemize}
\item
An \( R \)\markdef{-chain} is a sequence \( ( x_n )_{ n \in \omega } \) of elements of \( X \) such that \( x_i \mathrel{R} x_{ i + 1} \) for all \( i \in \omega \).
The element \( x_0 \) is the starting point of the chain.
\item
An \( R \)\markdef{-cycle} is a finite string \( x_0 , \dots , x_n \) of elements of \( X \) such that \( x_i \mathrel{R} x_{ i + 1} \) for all \( i < n \) and \( x_n \mathrel{R} x_0 \).
\item
\( R \) is \markdef{total on \( X \)} if \( \FORALL{x \in X } \EXISTS{y \in X} x \mathrel{R} y \).
\end{itemize}
Any \( R \)-cycle yields an \( R \)-chain.

The \markdef{Axiom of Dependent Choice} \( \DC \) is \( \FORALL{X} \DC ( X ) \), where
\begin{equation*}
\parbox[c]{0.8\textwidth}{For any nonempty, total \( R \subseteq X^2 \) there is \( ( x_n )_{n \in \omega} \) such that \( \forall n \in \omega \left ( x_n \mathrel{R} x_{n + 1} \right ) \).} \tag{\( \DC ( X ) \)}
\end{equation*}

The axioms \( \DC \) and \( \AComega \) are ubiquitous in set theory and figure prominently in many areas of mathematics, including analysis and topology.
They are probably the most popular weak forms of the axiom of choice, since they are powerful enough to enable standard mathematical constructions, yet they are weak enough to avoid the pathologies given by \( \AC \).

It is well-known that \( \DC \IMPLIES \AComega \) (Theorem~\ref{th:folklore}), so one may ask if this result holds uniformly, that is: does \( \DC ( X ) \IMPLIES \AComega ( X ) \) for all \( X \)?
This implication holds for many \( X \)s, but in order to prove it in general, \( \AComega ( \R ) \) must be assumed (Theorem~\ref{th:Alessandro}).
In Section~\ref{sec:main}, we will show that the assumption \( \AComega ( \R ) \) cannot be dropped, as it is consistent with \( \ZF \) that there is a set \( A \subseteq \R \) for which \( \DC ( A ) \) holds, but \( \AComega ( A ) \) fails (Theorem~\ref{th:Lorenzo}). 
In Section~\ref{sec:complementary}, we discuss some complementary results along with the question on the definability of the set constructed in Section~\ref{sec:main}.

\subsection*{Notation}
Our notation is standard---see e.g.~\cite{Jech:2003pd}.
We write \( X \precsim Y \) to say that there is an injection from \( X \) into \( Y \), and \( X \approx Y \) to say that \( X \) and \( Y \) are in bijection.
Ordered pairs are denoted by \( ( a , b ) \), finite sequences are denoted by \( \seq{ a_0 , \dots , a_n } \) or by \( ( a_0 , \dots , a_n ) \), countable sequences are denoted by \( \seqof{ a_n }{ n \in \omega } \) or by \( ( a_n )_{ n \in \omega } \).
The \markdef{concatenation} of a finite sequence \( s \) with a finite/countable sequence \( t \) is the finite/countable sequence \( s \conc t \) obtained by listing all elements of \( s \) and then all elements of \( t \).
The set of all finite (countable) sequences from \( X \) is \( {}^{< \omega } X \) (respectively: \( {}^ \omega X \)).
The collection of all finite subsets of a set \( X \) is \( [ X ]^{< \omega } \).

If \( Y \) is a subset of a topological space \( X \), then \( \Cl ( Y ) \) is its closure, and \( \Cl_A ( Y ) \coloneqq \Cl ( Y ) \cap A \) is the closure of \( Y \cap A \) with respect to \( A \subseteq X \).

Following set-theoretic practice, we refer to members of \( {}^\omega \omega \) or \( \Pow ( \omega ) \) as ``reals", and we identify \( \R \) with the Baire space \({}^{\omega} \omega\) or with the Cantor space \( {}^ \omega 2 \), depending on what is most convenient for the argument at hand.

\section{Basic constructions}
For the reader's convenience, let us recall a few notions and results that will be used throughout the paper.

A set \( X \) is \markdef{finite} if \( X \approx n \) for some \( n \in \omega \); otherwise it is \markdef{infinite}.
A set \( X \) is \markdef{Dedekind-infinite} if \( \omega \precsim X \); otherwise it is \markdef{Dedekind-finite} or simply \markdef{D-finite}.
Every finite set is D-finite, and assuming \( \AComega \) the converse holds.

It is consistent with \( \ZF \) that infinite D-finite sets exist (see Section~\ref{subsec:firstCohen}).
By~\cite{KaragilaarXiv}, it is even consistent that every set is the surjective image of a D-finite set.

Let \( R \) be a binary relation.
With abuse of notation, we write
\[
R ( x ) \coloneqq \setof{ y }{ x \mathrel{R} y }
\]
for the set of all \( y \)s that are related to \( x \), and 
\[
 R \restriction A \coloneqq R \cap ( A \times A ) 
\]
for the restriction of \( R \) to the set \( A \).
The \markdef{transitive closure} of \( R \) 
\[
R^+ \coloneqq \setof{ ( x , y ) }{ \EXISTS{\seq{ y_0 , \dots , y_n } } ( x \mathrel{R} y_0 \mathrel{R} y_1 \mathrel{R} \cdots \mathrel{R} y_n \mathrel{R} y ) }
\]
is the smallest transitive relation containing \( R \).

The next few results are folklore.

\begin{proposition}\label{prop:basicpropertiesofDC&AComega}
Let \( X \) be a set.
\begin{enumerate-(a)}
\item \label{prop:basicpropertiesofDC&AComega-a} 
If \( Y \) is the surjective image of \( X \), then \( \DC ( X ) \IMPLIES \DC ( Y ) \).
\item \label{prop:basicpropertiesofDC&AComega-b} 
\( \DC ( X ) \) is equivalent to the seemingly stronger statement:
For any total \( R \subseteq X \times X \) and for any \( a \in X \), there is an \( R \)-chain starting from \( a \).
\item \label{prop:basicpropertiesofDC&AComega-d} 
If \( \emptyset \neq A_n \subseteq X \) and \( A_n \cap A_m = \emptyset \), then \( \DC ( X ) \) implies that there is a choice function for the \( A_n \)'s.
\item \label{prop:basicpropertiesofDC&AComega-e} 
\( \DC ( X \times \omega ) \IMPLIES \AComega ( X ) \).
\end{enumerate-(a)}
\end{proposition}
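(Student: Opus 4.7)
The plan is to handle the four parts in order, with part (b) carrying the main content and the others largely following as corollaries.

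For (a), given a surjection $\pi \colon X \to Y$ and a nonempty total $R \subseteq Y^2$, pull $R$ back along $\pi$ by setting $x \mathrel{S} x'$ iff $\pi(x) \mathrel{R} \pi(x')$. Totality of $R$ together with surjectivity of $\pi$ makes $S$ total on $X$, and applying $\DC(X)$ to $S$ yields an $S$-chain whose pointwise image under $\pi$ is the required $R$-chain on $Y$.

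For (b), I would first record the auxiliary observation that $\DC(X) \IMPLIES \DC(C)$ for every nonempty $C \subseteq X$: given a total $T$ on $C$ and a fixed $c_0 \in C$, extend $T$ to $T'$ on $X$ by declaring $x \mathrel{T'} c_0$ whenever $x \in X \setminus C$ while keeping $T' = T$ on $C$. Then $T'$ is total on $X$, and any $T'$-chain obtained from $\DC(X)$ either starts in $C$ and remains there, or starts outside $C$, jumps to $c_0 \in C$ at step $1$, and stays inside $C$ thereafter; in either case, dropping the initial segment (if needed) yields a $T$-chain in $C$. Now let $R$ be total on $X$, $a \in X$, and set
\[ B \coloneqq \setof{x \in X}{\text{no } R\text{-chain starts at } x}. \]
If $b \in B$ and $b \mathrel{R} y$, then $y \in B$ as well (otherwise prepending $b$ to a chain from $y$ would contradict $b \in B$), so $R \restriction B$ is total on $B$. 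Were $B$ nonempty, the auxiliary observation would give $\DC(B)$, hence an $R$-chain inside $B$; but such a chain starts at some $b \in B$, contradicting $b \in B$. Therefore $B = \emptyset$, and in particular there is an $R$-chain starting from $a$.

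Part (c) then follows: given pairwise disjoint nonempty $A_n \subseteq X$, define $R$ on $A \coloneqq \bigcup_n A_n$ by $x \mathrel{R} y$ iff $x \in A_n$ and $y \in A_{n+1}$ for some $n$, which is total since each $A_{n+1}$ is nonempty. Since $A \subseteq X$, (b) applied inside $A$ produces an $R$-chain starting at any chosen $a_0 \in A_0$, whose $n$-th term lies in $A_n$ and thus gives a choice function. Part (d) is immediate from (c): the sets $A_n \times \{n\}$ are pairwise disjoint nonempty subsets of $X \times \omega$ whenever $(A_n)_{n \in \omega}$ is a sequence of nonempty subsets of $X$, so $\DC(X \times \omega)$ combined with (c) yields the required choice function for the $A_n$'s.

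The main obstacle is (b): $\DC(X)$ as stated does not let one prescribe the starting point of the chain, so the argument has to proceed indirectly by isolating the ``dead-end'' set $B$ and deriving a contradiction from $B \neq \emptyset$. The auxiliary descent $\DC(X) \IMPLIES \DC(C)$ for $C \subseteq X$ is what makes the indirect argument usable inside $B$ rather than in all of $X$.
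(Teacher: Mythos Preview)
Your proofs of (a), (c), and (d) match the paper's arguments essentially verbatim. The one substantive difference is in (b).

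The paper's approach to (b) is direct rather than by contradiction: given total $R$ and $a \in X$, it restricts attention to the forward-reachability set $R^+(a)$, observes that $R \restriction R^+(a)$ is total there, invokes part~(a) to get $\DC(R^+(a))$, obtains a chain $(y_n)_{n\in\omega}$ inside $R^+(a)$, and then prepends a finite $R$-path from $a$ to $y_0$ (which exists by definition of $R^+(a)$) to produce the desired chain from $a$. Your route instead isolates the set $B$ of points from which no chain starts, shows $R \restriction B$ is total, and derives a contradiction from $B \neq \emptyset$ via $\DC(B)$. Both arguments rely on the same descent $\DC(X) \Rightarrow \DC(C)$ for $C \subseteq X$; the paper obtains it as a special case of (a) (any subset is a surjective image), while you prove it separately by extending the relation. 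Your argument is clean and arguably more uniform, since it avoids the concatenation step; the paper's is slightly more explicit in that it actually exhibits the chain rather than ruling out its nonexistence.
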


\begin{proof}
\ref{prop:basicpropertiesofDC&AComega-a}
Assume \( \DC ( X ) \) and let \( R \) be a total relation on \( Y \) and let \( F \colon X \to Y \) be a surjection.
The relation \( S = \setof{ ( x , x' ) \in X^2 }{ ( F ( x ) , F ( x ' ) ) \in R } \) is total on \( X \), so by assumption there is an \( S \)-chain \( ( x_n )_{ n \in \omega } \).
Then \( ( F ( x_n ) )_{ n \in \omega } \) is an \( R \)-chain.

\smallskip

\ref{prop:basicpropertiesofDC&AComega-b}
Suppose \( R \subseteq X^2 \) is total and let \( a \in X \).
Observe that \( S = R \restriction R^+ ( a ) \) is total on \( R^+ ( a ) \).
By part~\ref{prop:basicpropertiesofDC&AComega-a} \( \DC ( R^+ ( a ) ) \) holds, hence there is an \( S \)-chain \( ( y_n )_{ n \in \omega } \).
Let \( ( x_0 , \dots , x_{ k + 1} ) \) witness that \( y_0 \in R^+ ( a ) \), i.e.~\(x_0 = a\), \( x_{k+1} = y_0 \) and \(x_i \mathrel{R} x_{i+1}\) for all \(i \le k\): then \( ( x_0 , \dots , x_k ) \conc ( y_n )_{ n \in \omega } \) is an \( R \)-chain starting from \( a \). 

\smallskip

\ref{prop:basicpropertiesofDC&AComega-d}
Let \( R \) be the relation on \( \bigcup_n A_n \subseteq X \) defined by 
\[
x \mathrel{R} y \IFF \EXISTS{ n \in \omega } \left ( x \in A_n \wedge y \in A_{n + 1} \right )
\]
By part~\ref{prop:basicpropertiesofDC&AComega-a} \( \DC ( \bigcup_n A_n ) \) holds, hence by part~\ref{prop:basicpropertiesofDC&AComega-b} there is an \( R \)-chain \( ( a_n )_{ n \in \omega } \) in \( \bigcup_n A_n \) starting from any \( a_0 \in A_0 \).
Observe that any \( R \)-chain \( ( a_n )_{n \in \omega } \) with \(a_0 \in A_0\) is such that \( a_n \in A_n \) for all \( n \in \omega \). 

\smallskip

\ref{prop:basicpropertiesofDC&AComega-e}
Given \( \emptyset \neq A_n \subseteq X \), let \( \bar{A}_n = A_n \times \setLR{n} \subseteq X \times \omega \).
By hypothesis and part \ref{prop:basicpropertiesofDC&AComega-d}, there is a sequence \( ( a_n , n)_{n \in \omega} \) such that \( (a_n , n ) \in \bar{A}_n \), hence \( a_n \in A_n \).
\end{proof}

The gist of part~\ref{prop:basicpropertiesofDC&AComega-d} of Proposition~\ref{prop:basicpropertiesofDC&AComega} is that we can use dependent choice rather than countable choice whenever the sets we choose from are disjoint.
Here is an example of such an application.

\begin{lemma}\label{lem:firstcountable}
Suppose \( X \) is a first countable \( \mathrm{T}_1\) space and \( a \in \Cl ( A ) \setminus A \) where \( A \subseteq X \).
Assume \( \DC ( A ) \) holds.
Then there are distinct \( a_n \in A \) such that \( a_n \to a \).
In particular \( \omega \precsim A \).
\end{lemma}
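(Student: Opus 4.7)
The plan is to reduce the assertion to a single application of $\DC(A)$ via a total binary relation on $A$, in the spirit of Proposition~\ref{prop:basicpropertiesofDC&AComega}\ref{prop:basicpropertiesofDC&AComega-d}: the naive approach would be to fix a decreasing neighborhood base $(U_n)_{n\in\omega}$ at $a$ and then pick $a_n \in A\cap U_n$ for each $n$, but this uses $\AComega(A)$ rather than $\DC(A)$. Instead I assign to each point of $A$ a ``level'' measuring how close it sits to $a$, and choose by dependent choice a sequence of strictly increasing level.

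First I fix a countable neighborhood base $(U_n)_{n\in\omega}$ at $a$ with $U_0 \supseteq U_1 \supseteq \cdots$. The $\mathrm{T}_1$ hypothesis is used here in a critical way: for every $b \neq a$, the set $\{b\}$ is closed, so $X \setminus \{b\}$ is a neighborhood of $a$, and therefore some $U_n$ misses $b$. In particular $\bigcap_n U_n = \{a\}$, and since $a \notin A$ every $x \in A$ admits a well-defined natural number
\[
n(x) \coloneqq \min\setof{n\in\omega}{x \notin U_n}.
\]
Equivalently, because $(U_n)$ is decreasing, $x \in U_m$ if and only if $m < n(x)$.

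Second, I define the relation $R$ on $A$ by $x \mathrel{R} y \iff n(y) > n(x)$, or equivalently $y \in A \cap U_{n(x)}$. Since $a \in \Cl(A)$ each $U_m$ meets $A$, so for any $x \in A$ there is $y \in A \cap U_{n(x)}$, which by the above has $n(y) > n(x)$; hence $R$ is total on $A$. Applying $\DC(A)$ yields an $R$-chain $(a_k)_{k\in\omega}$ in $A$. Along the chain, $n(a_0) < n(a_1) < n(a_2) < \cdots$, so the $a_k$ are pairwise distinct (giving $\omega \precsim A$), and for any neighborhood $V$ of $a$ we pick $m$ with $U_m \subseteq V$: once $k$ is large enough that $n(a_k) > m$, we have $a_k \in U_m \subseteq V$, so $a_k \to a$.

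The only nontrivial step is ensuring that $n(x)$ is defined for every $x \in A$, which is exactly where $\mathrm{T}_1$ enters; everything else is a straightforward verification that $R$ is total and that an $R$-chain produces the desired sequence, so no serious obstacle is expected.
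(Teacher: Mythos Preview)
Your proof is correct and is essentially the paper's argument with Proposition~\ref{prop:basicpropertiesofDC&AComega}\ref{prop:basicpropertiesofDC&AComega-d} inlined: the paper forms the pairwise disjoint annuli \(A_n=(U_n\setminus U_{n+1})\cap A\) (thinning the base so these are nonempty) and then invokes that proposition, whose proof builds exactly the ``go to a higher level'' relation you wrote down. Your use of the strict inequality \(n(y)>n(x)\) neatly avoids the thinning step, but otherwise the two arguments coincide.
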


\begin{proof}
Let \( \setof{ U_n }{ n \in \omega } \) be a neighborhood base for \( a \), with \(U_{n+1}\subseteq U_n\) for every \(n\). 
Given that \( X \) is \( \mathrm{T}_1 \), we can assume, by passing to a subsequence if needed, that \( A_n = ( U_n \setminus U_{ n + 1 } ) \cap A\) is nonempty for every \( n \). 
Since the \( A_n \)s are pairwise disjoint and nonempty, by Proposition~\ref{prop:basicpropertiesofDC&AComega}\ref{prop:basicpropertiesofDC&AComega-d} there is a sequence of \( ( a_n ) _{ n \in \omega } \) of distinct elements of \( A \) such that \( a_n \in A_n \) for every \( n \). 
\end{proof}

\begin{lemma}\label{lem:sums}
Let \( X \) be a set.
\begin{enumerate-(a)}
\item\label{lem:sums-a}
\( X \times 2 \precsim X \IMPLIES X \times \omega \precsim X \).
\item\label{lem:sums-b}
If \( X \neq \emptyset \), then \( {}^{< \omega } ( {}^{< \omega } X ) \precsim {}^{< \omega } X \), so \( {}^{< \omega } X \times 2 \precsim {}^{< \omega } X \).
\item\label{lem:sums-c}
\( \FORALL{X} \EXISTS{Y} ( X \subseteq Y \wedge {}^{< \omega } Y \precsim Y ) \).
\end{enumerate-(a)}
\end{lemma}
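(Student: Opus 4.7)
For part (a), the plan is to turn the injection $f \colon X \times 2 \to X$ into two ``shifted copies'' of $X$ inside itself. Set $g(x) = f(x,0)$ and $h(x) = f(x,1)$; injectivity of $f$ says precisely that $g$ and $h$ are injective and $\ran(g) \cap \ran(h) = \emptyset$. Define $\phi \colon X \times \omega \to X$ by $\phi(x,n) = g^n(h(x))$, i.e.\ iterate $g$ on $h(x)$. To check injectivity of $\phi$: if $g^n(h(x)) = g^m(h(y))$ with $n \le m$, cancelling the injection $g^n$ gives $h(x) = g^{m-n}(h(y))$; the case $m > n$ would force $h(x) \in \ran(g)$, contradicting disjointness, so $m = n$, and then $x = y$ by injectivity of $h$.

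For part (b), I will build the injection ${}^{<\omega}({}^{<\omega}X) \to {}^{<\omega}X$ in two cases. If $X = \set{x_*}$ is a singleton, then ${}^{<\omega}X$ is in bijection with $\omega$ via length, so ${}^{<\omega}({}^{<\omega}X) \approx {}^{<\omega}\omega$, and the latter injects into $\omega \approx {}^{<\omega}X$ by any standard G\"odel pairing. If $X$ contains two distinct elements $x_*, y_*$, the encoding
\[
(s_0, \dots, s_{k-1}) \mapsto (x_*)^{n_0} \conc (y_*) \conc s_0 \conc (x_*)^{n_1} \conc (y_*) \conc s_1 \conc \cdots \conc (x_*)^{n_{k-1}} \conc (y_*) \conc s_{k-1},
\]
with $n_i = \lh(s_i)$, is injective: from the image one reads the leading run of $x_*$'s up to the first $y_*$ to recover $n_0$, then takes the next $n_0$ entries as $s_0$, and iterates. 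The corollary ${}^{<\omega}X \times 2 \precsim {}^{<\omega}X$ then follows by composing with the obvious injection $(s,0) \mapsto \seq{s}$, $(s,1) \mapsto \seq{s,s}$ of ${}^{<\omega}X \times 2$ into ${}^{<\omega}({}^{<\omega}X)$.

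For part (c), I close $X$ under the finite-sequence operation by countable recursion: let $Y_0 = X$, $Y_{n+1} = Y_n \cup {}^{<\omega}Y_n$, and $Y = \bigcup_{n \in \omega} Y_n$. Then $X = Y_0 \subseteq Y$. Any $t = (y_0, \dots, y_{k-1}) \in {}^{<\omega}Y$ has all its entries in some common $Y_N$ (take $N$ the maximum of the finitely many stages at which the $y_i$ first appear), so $t \in {}^{<\omega}Y_N \subseteq Y_{N+1} \subseteq Y$. Hence ${}^{<\omega}Y \subseteq Y$, and the inclusion map is the required injection.

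The one mildly delicate step is the encoding in part (b): a second element of $X$ is needed as a separator, which is why the singleton case has to be handled on its own using arithmetic in $\omega$. Everything else is bookkeeping.
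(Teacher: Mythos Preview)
Your proof is correct. Part (a) is exactly the paper's construction (with the roles of the two injections swapped), and part (b) follows the same case split as the paper; where the paper simply cites \cite{Andretta:2022aa}*{Proposition 2.1} for the non-singleton case, you spell out an explicit length-prefixed encoding, which is fine and makes the argument self-contained.

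The only genuine difference is in part (c). The paper takes \( Y = \Vv_\lambda \) for a sufficiently large limit ordinal \( \lambda \), using that \( \Vv_\lambda \) is closed under finite sequences. Your inductive closure \( Y = \bigcup_n Y_n \) with \( Y_{n+1} = Y_n \cup {}^{<\omega} Y_n \) achieves the same conclusion more parsimoniously: it produces a set that is literally closed under the finite-sequence operator (so the required injection is the identity), and it avoids appealing to the cumulative hierarchy. The paper's choice is shorter to state; yours is more explicit and yields the smallest such \( Y \) containing \( X \). Either route is perfectly adequate for the downstream applications (Theorem~\ref{th:folklore}). One tiny remark: in your argument for (c), the empty sequence needs a word, since then there are no \( y_i \) and the maximum is over the empty set; but of course \( \emptyset \in {}^{<\omega} Y_0 \subseteq Y_1 \).
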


\begin{proof}
\ref{lem:sums-a}
If \( f_0 , f_1 \colon X \to X \) are injections with \( \ran ( f_0 ) \cap \ran ( f_1 ) = \emptyset \), then define an injection \( F \colon X \times \omega \to X \) as follows:
\[
F ( x , 0 ) = f_0 ( x ), \qquad F ( x , n + 1 ) = \underbrace{f_1\circ \dots \circ f_1}_{n+1 \text{ times}} \circ f_0 ( x ) . 
\]

\ref{lem:sums-b}
If \( X \) is a singleton, then \( {}^{< \omega } X \approx \omega \), and the result follows at once.
If \( X \) has at least two elements, the result follows from~\cite{Andretta:2022aa}*{Proposition 2.1}.

\ref{lem:sums-c}
Given \( X \) take \( Y = \Vv_ \lambda \) with sufficiently large limit \( \lambda \).
\end{proof}
 
From Lemma~\ref{lem:sums} and Proposition~\ref{prop:basicpropertiesofDC&AComega}\ref{prop:basicpropertiesofDC&AComega-e} we obtain at once:
 
\begin{theorem}\label{th:folklore}
\begin{enumerate-(a)}
\item\label{th:folklore-a}
If \( X \times 2 \precsim X \), then \( \DC ( X ) \IMPLIES \AComega ( X ) \).
In particular: \( \DC ( \R ) \IMPLIES \AComega ( \R ) \).
\item\label{th:folklore-b}
\( \FORALL{X} \EXISTS{Y} ( X \subseteq Y \wedge ( \DC ( Y ) \IMPLIES \AComega ( Y ) ) \). 
\item\label{th:folklore-c}
\( \DC \IMPLIES \AComega \). 
\end{enumerate-(a)}
\end{theorem}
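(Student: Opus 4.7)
The plan is to assemble the three parts directly from Proposition~\ref{prop:basicpropertiesofDC&AComega}\ref{prop:basicpropertiesofDC&AComega-e} and the three clauses of Lemma~\ref{lem:sums}, essentially just ``plumbing''. The central recurring move is: whenever \( X \times \omega \precsim X \) (with \( X \neq \emptyset \)), one can turn an injection \( X \times \omega \hookrightarrow X \) into a surjection \( X \twoheadrightarrow X \times \omega \) by sending everything outside the image to a fixed value, and then Proposition~\ref{prop:basicpropertiesofDC&AComega}\ref{prop:basicpropertiesofDC&AComega-a} transfers \( \DC \).

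For part~\ref{th:folklore-a}, I would assume \( X \times 2 \precsim X \), apply Lemma~\ref{lem:sums}\ref{lem:sums-a} to get \( X \times \omega \precsim X \), and (after dispatching \( X = \emptyset \), where \( \AComega(X) \) is vacuous) fix the above surjection \( X \twoheadrightarrow X \times \omega \). Then from \( \DC(X) \) I would deduce \( \DC(X \times \omega) \) via Proposition~\ref{prop:basicpropertiesofDC&AComega}\ref{prop:basicpropertiesofDC&AComega-a}, and finally \( \AComega(X) \) via Proposition~\ref{prop:basicpropertiesofDC&AComega}\ref{prop:basicpropertiesofDC&AComega-e}. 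The particular case of \( \R \) follows because \( \R \times 2 \precsim \R \) (e.g.\ using \( \R \approx {}^\omega 2 \) and prepending a bit).

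For part~\ref{th:folklore-b}, given \( X \), I would use Lemma~\ref{lem:sums}\ref{lem:sums-c} to obtain some \( Y \supseteq X \) with \( {}^{< \omega } Y \precsim Y \). Since also \( Y \precsim {}^{< \omega } Y \) via \( y \mapsto \seq{y} \), Schröder–Bernstein gives \( Y \approx {}^{<\omega} Y \); combined with Lemma~\ref{lem:sums}\ref{lem:sums-b} this yields \( Y \times 2 \precsim {}^{< \omega } Y \times 2 \precsim {}^{< \omega } Y \approx Y \), so part~\ref{th:folklore-a} applies and \( \DC(Y) \IMPLIES \AComega(Y) \).

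For part~\ref{th:folklore-c}, assuming \( \DC \), I would fix any set \( X \) and apply part~\ref{th:folklore-b} to obtain \( Y \supseteq X \) with \( \DC(Y) \IMPLIES \AComega(Y) \); from \( \DC \) I get \( \DC(Y) \), hence \( \AComega(Y) \), and since any countable family of nonempty subsets of \( X \) is also such a family in \( Y \), this gives \( \AComega(X) \). There is no real obstacle here—the only point requiring any vigilance is the empty-set edge case in part~\ref{th:folklore-a} and the invocation of Schröder–Bernstein (which is available in \( \ZF \)) when upgrading \( {}^{<\omega} Y \precsim Y \) to a bijection in part~\ref{th:folklore-b}.
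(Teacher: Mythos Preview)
Your proposal is correct and matches the paper's approach exactly: the paper simply states that the theorem follows ``at once'' from Lemma~\ref{lem:sums} together with Proposition~\ref{prop:basicpropertiesofDC&AComega}\ref{prop:basicpropertiesofDC&AComega-e}, and you have spelled out precisely that derivation. One small simplification: in part~\ref{th:folklore-b} you do not need Schr\"oder--Bernstein, since the chain \( Y \times 2 \precsim {}^{<\omega}Y \times 2 \precsim {}^{<\omega}Y \precsim Y \) already gives \( Y \times 2 \precsim Y \) directly from the conclusion of Lemma~\ref{lem:sums}\ref{lem:sums-c}.
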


\begin{lemma}\label{lem:propertiesofA}
\begin{enumerate-(a)}
\item\label{lem:propertiesofA-a}
Let \( A \subseteq \R \).
Then \( \AComega ( A ) \IMPLIES A \) is separable.
\item\label{lem:propertiesofA-b}
\( \AComega ( \R ) \IFF \FORALL{A \subseteq \R} ( A \text{ is separable} ) \).
\item\label{lem:propertiesofA-c}
Suppose \( A \subseteq \R \) contains a nonempty perfect set, and assume \( \DC ( A ) \).
Then \( \DC ( \R ) \) holds, and hence \( \AComega ( A ) \) holds.
\end{enumerate-(a)}
\end{lemma}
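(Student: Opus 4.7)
For part~\ref{lem:propertiesofA-a}, the plan is the standard argument: fix the canonical enumeration \( (B_n)_{n\in\omega} \) of open intervals with rational endpoints. Assuming \( A \neq \emptyset \) (the empty case is trivial), pick any \( a^* \in A \) and set \( C_n \coloneqq B_n \cap A \) when this is nonempty and \( C_n \coloneqq \{a^*\} \) otherwise. Then \( (C_n)_{n \in \omega} \) is a sequence of nonempty subsets of \( A \), so \( \AComega(A) \) produces \( (a_n)_{n\in\omega} \) with \( a_n \in C_n \), and \( \{a_n : n \in \omega\} \) is a countable dense subset of \(A\).

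For part~\ref{lem:propertiesofA-b}, the forward direction combines \ref{lem:propertiesofA-a} with the immediate observation that \( \AComega(\R) \IMPLIES \AComega(A) \) for every \( A \subseteq \R \). For the converse, given nonempty \( A_n \subseteq \R \), I would transport \( B \coloneqq \bigcup_n A_n \times \{n\} \) into \(\R\) using an explicit homeomorphism from \( \R \times \omega \) (with \( \omega \) discrete) onto an open subset of \( \R \), such as \( (x,n) \mapsto n + \tfrac{1}{\pi}\arctan(x) \). Each slice corresponding to \( A_n \) becomes relatively clopen in the image \( B' \subseteq \R \). By hypothesis \(B'\) has a countable dense subset \(D\), which necessarily meets every slice; picking any enumeration \( (d_k)_{k\in\omega} \) of \(D\) and setting \( k_n \coloneqq \min\{k : d_k \text{ is in the $n$-th slice}\} \) yields a choice function for \( (A_n) \).

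For part~\ref{lem:propertiesofA-c}, the key step is to prove \( \R \precsim A \); once this is in hand, the rest is formal. Let \( P \subseteq A \) be a nonempty perfect set. I would build a Cantor scheme inside \(P\) in a choice-free manner, using the canonical enumeration \( (U_n)_{n\in\omega} \) of rational-endpoint open intervals: recursively, for each \( s \in 2^{<\omega} \), let \( V_{s \conc 0} \) and \( V_{s \conc 1} \) be the components of the \emph{first} pair from this enumeration whose elements are disjoint subsets of \( V_s \), of diameter \( < 2^{-(|s|+1)} \), and each meeting \( P \); such a pair exists because every point of \( V_s \cap P \) is a limit point of \(P\). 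For every \( \alpha \in 2^\omega \), the nested intersection \( \bigcap_n \Cl(V_{\alpha \restriction n}) \) is a singleton lying in \(P\) (by compactness and closedness of \(P\)), so \( \alpha \mapsto \) this singleton is an injection \( 2^\omega \hookrightarrow P \subseteq A \). Hence \(A\) surjects onto \(\R\), and Proposition~\ref{prop:basicpropertiesofDC&AComega}\ref{prop:basicpropertiesofDC&AComega-a} gives \( \DC(A) \IMPLIES \DC(\R) \); Theorem~\ref{th:folklore}\ref{th:folklore-a} together with \( A \subseteq \R \) then yield \( \AComega(A) \).

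The main obstacle is in part~\ref{lem:propertiesofA-c}: one must verify that the Cantor-scheme recursion is truly \emph{definable} from the enumeration of rational intervals and from \(P\), so that no hidden appeal to choice enters in producing the injection \( 2^\omega \hookrightarrow P \). Everything else is either bookkeeping (parts \ref{lem:propertiesofA-a} and \ref{lem:propertiesofA-b}) or a direct invocation of results already established in the paper.
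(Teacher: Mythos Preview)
Your proof is correct and follows essentially the same approach as the paper: part~\ref{lem:propertiesofA-a} is the standard second-countable argument; for~\ref{lem:propertiesofA-b} the paper works in Baire space via \( A = \setof{\seq{n}\conc x}{x \in A_n} \) rather than your arctan embedding, but the idea is identical; and for~\ref{lem:propertiesofA-c} the paper simply asserts \( P \approx \R \) without spelling out the Cantor scheme you (correctly and choice-freely) construct. Your concern about definability in~\ref{lem:propertiesofA-c} is well-placed but resolved exactly as you describe, so there is no gap.
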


\begin{proof}
As \( A \) is second countable, part~\ref{lem:propertiesofA-a} of Lemma~\ref{lem:propertiesofA} follows.

\ref{lem:propertiesofA-b}
The direction \( ( \IMPLIES ) \) is a direct consequence of part~\ref{lem:propertiesofA-a}. 
For the other direction, fix a sequence \( ( A_n )_{ n \in \omega } \) of nonempty subsets of \( {}^ \omega \omega \) and consider the set \( A = \setof{ \seq{ n } \conc x }{ n \in \omega \text{ and } x \in A_n } \). 
From an enumeration of a dense subset of \( A \) (which exists by assumption), we can extract a choice function for \( ( A_n )_{ n \in \omega } \).

\ref{lem:propertiesofA-c}
If \( P \subseteq A \) is perfect, then \( P \approx \R \), and since \( A \) surjects onto \( P \), then \( \DC ( \R ) \) holds, and hence \( \AComega ( \R ) \) holds.
\end{proof}

Note that the implication in part~\ref{lem:propertiesofA-a} of Lemma~\ref{lem:propertiesofA} cannot be reversed: if \( A \subseteq \R \) is a witness of the failure of countable choice, then the same is true of the separable set \( A \cup \Q \).

\subsection{\( \AComega ( X ) \) follows from \( \DC ( X ) \) together with \( \AComega ( \R ) \).}
Let us start with the following combinatorial result that might be of independent interest.
It is stated for families of sets indexed by an arbitrary set \( I \), but when \( I = \omega \) the assumption \( \AC_I ( \Pow ( I ) ) \) becomes \( \AComega ( \R ) \).

\begin{lemma}\label{lem:combinatorial}
Let \( ( X_i )_{ i \in I } \) be nonempty sets, and assume \( \AC_I ( \Pow ( I ) ) \).
Then there are \( ( Y_i )_{ i \in I } \) such that \( \emptyset \neq Y_i \subseteq X_i \) and for all \( i , j \in I \) either \( Y_i = Y_j \) or else \( Y_i \cap Y_j = \emptyset \).
\end{lemma}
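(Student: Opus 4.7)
The plan is to exploit the hypothesis \( \AC_I(\Pow(I)) \) to assign a coherent ``signature'' to each index, and then to take \( Y_i \) to be the fiber of that signature inside \( X_i \).

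For every \( x \in \bigcup_{i \in I} X_i \) I would define its signature
\[
S_x \coloneqq \setof{ j \in I }{ x \in X_j } \in \Pow(I).
\]
For each \( i \in I \) the collection \( \mathcal{S}_i \coloneqq \setof{ S_x }{ x \in X_i } \) is a nonempty subset of \( \Pow(I) \), and the indexed family \( ( \mathcal{S}_i )_{ i \in I } \) is obtained from \( ( X_i )_{ i \in I } \) by Replacement, with no choice needed. Applying \( \AC_I(\Pow(I)) \) to this family yields a sequence \( ( T_i )_{ i \in I } \) with \( T_i \in \mathcal{S}_i \); by construction, for each \( i \) there is \( x_i \in X_i \) with \( T_i = S_{x_i} \).

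Then I would set \( Y_i \coloneqq \setof{ x \in X_i }{ S_x = T_i } \), which is a subset of \( X_i \) containing \( x_i \), hence nonempty. The decisive check is the disjoint-or-equal property. Suppose \( x \in Y_i \cap Y_j \). Then \( T_i = S_x = T_j \), and since \( x \in X_j \) we have \( j \in S_x = T_i \). For any \( y \in Y_i \) we have \( S_y = T_i \), so \( j \in S_y \), i.e.~\( y \in X_j \); together with \( S_y = T_i = T_j \) this gives \( y \in Y_j \). Thus \( Y_i \subseteq Y_j \), and by symmetry \( Y_i = Y_j \).

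I do not foresee any genuine obstacle. The whole argument hinges on the observation that the natural ``type'' of an element \( x \) with respect to the family \( ( X_i )_{ i \in I } \) is an element of \( \Pow(I) \), which makes \( \AC_I(\Pow(I)) \) the tailor-made hypothesis for choosing one such type per index; the combinatorial verification afterwards is choice-free and follows at once from the definition of \( S_x \).
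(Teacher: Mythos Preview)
Your proof is correct and follows essentially the same approach as the paper: your signature map \(x\mapsto S_x\) is exactly the paper's \(F\), your \(\mathcal{S}_i\) coincides with the paper's \(A_i=\setof{a\in\ran(F)}{i\in a}\), and your \(Y_i\) is the fiber \(F^{-1}(\{T_i\})\). The only cosmetic difference is that the paper defines \(Y_i\) as the full fiber and then observes \(Y_i\subseteq X_i\) because \(i\in T_i\), whereas you restrict to \(X_i\) from the start; the two definitions agree, and your disjoint-or-equal verification is just a slightly more explicit rendering of the paper's one-line remark that distinct fibers are disjoint.
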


\begin{proof}
Let \( F \colon \bigcup_{i \in I} X_i \to \Pow ( I ) \), \( F ( x ) = \setof{ i \in I }{ x \in X_i } \) and let \( A_i = \setof{ a \in \ran ( F ) }{ i \in a } \). 
Observe that for all \( x \in \bigcup_{i \in I} X_i \) and all \( i \in I \)
\begin{equation}\label{eq:lem:combinatorial}
x \in X_i \IFF F ( x ) \in A_i . 
\end{equation}
In particular, \( \emptyset \neq A_i \subseteq \Pow ( I ) \) for all \( i \in I \). 
By \( \AC_I ( \Pow ( I ) ) \) pick \( a_i \in A_i \), and let \( Y_i = F^{-1} ( \setLR{a_i} ) \subseteq \bigcup_{i \in I} X_i \).
Then 
\[ 
Y_i = \setofLR{ x }{ F ( x ) = a_i } = \setof{ x }{ \setof{ j }{ x \in X_j} = a_i },
\]
and since \( i \in a_i \), then \( Y_i \subseteq X_i \). 
The sets \( Y_i \) need not be distinct as the \( a_i \)s need not be distinct, but if \( a_i \neq a_ j \), then \( Y_i \cap Y_j = \emptyset \).
\end{proof}

By~\eqref{eq:lem:combinatorial} if the \( X_i \)s are finite, then so are the \( A_i \)s. 
If \( \Pow ( I ) \) is linearly orderable (e.g.~when \( I \) is well-orderable), then the \( a_i \)s can be chosen without appealing to any axiom.
Therefore:

\begin{corollary}\label{cor:anteAlessandro}
If \( \Pow ( I ) \) is linearly orderable and \( ( X_i )_{ i \in I } \) are finite, nonempty sets, then there are \( \emptyset \neq Y_i \subseteq X_i \) such that for all \( i , j \in I \) either \( Y_i = Y_j \) or else \( Y_i \cap Y_j = \emptyset \).
\end{corollary}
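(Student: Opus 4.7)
The plan is to mimic the proof of Lemma~\ref{lem:combinatorial} and verify that the two hypotheses of the corollary combine to eliminate the appeal to \( \AC_I ( \Pow ( I ) ) \). First, define \( F \colon \bigcup_{i \in I} X_i \to \Pow ( I ) \) by \( F ( x ) = \setof{ i \in I }{ x \in X_i } \) and \( A_i = \setof{ a \in \ran ( F ) }{ i \in a } \) exactly as before, so that equivalence~\eqref{eq:lem:combinatorial} continues to hold.

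The key observation is that~\eqref{eq:lem:combinatorial} yields \( A_i = F [ X_i ] \): the forward direction of~\eqref{eq:lem:combinatorial} gives \( A_i \subseteq F [ X_i ] \), while the converse direction gives \( F [ X_i ] \subseteq A_i \). Since \( X_i \) is finite, so is \( A_i \). Now fix a linear order \( < \) on \( \Pow ( I ) \), whose existence is guaranteed by hypothesis; since each \( A_i \) is a finite nonempty subset of \( \Pow ( I ) \), its \( < \)-least element \( a_i \) is well-defined and depends only on the parameters \( ( X_i )_{ i \in I } \) and \( < \), with no choice principle invoked.

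Finally, set \( Y_i = F^{-1} ( \setLR{ a_i } ) \) as in Lemma~\ref{lem:combinatorial}: nonemptiness of \( Y_i \) follows from \( a_i \in \ran ( F ) \), the relation \( i \in a_i \) forces \( Y_i \subseteq X_i \), and whenever \( a_i \neq a_j \) the preimages \( Y_i \) and \( Y_j \) are automatically disjoint. There is no genuine obstacle here: the substantive content is simply that a linear order canonically selects a representative from each finite nonempty set, which is exactly what Lemma~\ref{lem:combinatorial} required \( \AC_I ( \Pow ( I ) ) \) for in the general case.
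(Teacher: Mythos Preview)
Your proof is correct and follows exactly the approach the paper indicates in the remark preceding the corollary: repeat the construction of Lemma~\ref{lem:combinatorial}, use~\eqref{eq:lem:combinatorial} to see that each \(A_i\) is finite when \(X_i\) is, and then replace the appeal to \(\AC_I(\Pow(I))\) by taking the \(<\)-least element under the given linear order. Your explicit observation that \(A_i = F[X_i]\) is a helpful justification of the finiteness step that the paper leaves implicit.
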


\begin{theorem}\label{th:Alessandro}
Assume \( \AComega ( \R ) \), then \( \FORALL{X} ( \DC ( X ) \IMPLIES \AComega ( X ) ) \).
\end{theorem}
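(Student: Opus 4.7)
The plan is to reduce the problem to one where Proposition~\ref{prop:basicpropertiesofDC&AComega}\ref{prop:basicpropertiesofDC&AComega-d} applies, using Lemma~\ref{lem:combinatorial} as the bridge. The hypothesis $\AComega(\R)$ is precisely $\AC_\omega(\Pow(\omega))$ via the bijection $\Pow(\omega) \approx \R$, so Lemma~\ref{lem:combinatorial} is available with $I = \omega$. The strategy: start from an arbitrary countable family in $X$, thin it down to a family that is \emph{pairwise disjoint or identical}, enumerate the distinct values without appealing to choice, and finally invoke $\DC(X)$ on the resulting pairwise disjoint subfamily.

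In detail, suppose $\DC(X)$ holds and let $(A_n)_{n \in \omega}$ be nonempty subsets of $X$. First, apply Lemma~\ref{lem:combinatorial} to obtain nonempty $Y_n \subseteq A_n$ such that for all $n, m \in \omega$ either $Y_n = Y_m$ or $Y_n \cap Y_m = \emptyset$. Next, enumerate the distinct values among the $Y_n$'s without any further choice: let $n_0 = 0$ and recursively let $n_{k+1}$ be the least $n$ with $Y_n \notin \set{Y_{n_0}, \dots, Y_{n_k}}$, if such an $n$ exists; set $B_k = Y_{n_k}$. The $B_k$'s then form a (finite or countable) family of pairwise disjoint nonempty subsets of $X$.

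If only finitely many $B_k$'s exist, a choice function $(b_k)$ for them is obtained by a straightforward induction in $\ZF$. Otherwise, Proposition~\ref{prop:basicpropertiesofDC&AComega}\ref{prop:basicpropertiesofDC&AComega-d} applied to $\DC(X)$ produces a sequence $(b_k)_{k \in \omega}$ with $b_k \in B_k$. Finally, for each $n$ there is a unique $k(n)$ such that $Y_n = B_{k(n)}$, and setting $a_n = b_{k(n)}$ gives the desired $a_n \in Y_n \subseteq A_n$.

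There is no real obstacle: once Lemma~\ref{lem:combinatorial} and Proposition~\ref{prop:basicpropertiesofDC&AComega}\ref{prop:basicpropertiesofDC&AComega-d} are in hand, the argument is essentially bookkeeping. The conceptual heart was already isolated in Lemma~\ref{lem:combinatorial}, whose role is to manufacture pairwise disjointness so that $\DC(X)$ — which normally only sees disjoint families, by the remark following Proposition~\ref{prop:basicpropertiesofDC&AComega} — can be leveraged as if it were full countable choice on $X$.
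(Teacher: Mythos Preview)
Your proof is correct and follows essentially the same approach as the paper: apply Lemma~\ref{lem:combinatorial} to refine the family to one that is pairwise disjoint-or-equal, extract an enumeration of the distinct values, and then split into the finite and infinite cases, invoking Proposition~\ref{prop:basicpropertiesofDC&AComega}\ref{prop:basicpropertiesofDC&AComega-d} in the latter. The only difference is cosmetic---you spell out the recursive construction of the indices $n_k$, whereas the paper simply asserts the existence of a suitable index set $I \subseteq \omega$.
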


\begin{proof}
Assume \( \DC ( X ) \) and let \( \emptyset \neq X_n \subseteq X \) for \( n \in \omega \).
By Lemma~\ref{lem:combinatorial}, there are \( \emptyset \neq Y_n \subseteq X_n \) such that for all \( n , m \in \omega \) either \( Y_n = Y_m \) or else \( Y_n \cap Y_m = \emptyset \).
Let \( I \subseteq \omega \) be such that \( \setof{ Y_i }{ i \in I } = \setof{ Y_n }{ n \in \omega } \) and \(Y_i \cap Y_j = \emptyset \) for every distinct \( i , j \in I \).
If we can find \( y_i \in Y_i \) for all \( i \in I \), then we can extend this to a choice sequence \( y_n \in Y_n \subseteq X_n \) for all \( n \in \omega \) as required.
If \( I \) is finite, then the \( y_i \)s can be found without any appeal to choice.
If \( I \) is infinite, then \( I \approx \omega \) so we can find the \( y_i \)s by Proposition~\ref{prop:basicpropertiesofDC&AComega}\ref{prop:basicpropertiesofDC&AComega-d}.
\end{proof}

The following result follows from the argument of Theorem~\ref{th:Alessandro} together with Corollary~\ref{cor:anteAlessandro}.
\begin{corollary}
\( \FORALL{X} ( \DC ( X ) \IMPLIES \AComega^{< \omega } ( X )) \), where \( \AComega^{< \omega } ( X ) \) asserts that every countable collection of nonempty finite subsets of \(X\) has a choice function.
\end{corollary}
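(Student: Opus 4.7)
The plan is to mirror the proof of Theorem~\ref{th:Alessandro}, substituting Corollary~\ref{cor:anteAlessandro} for Lemma~\ref{lem:combinatorial} in the disjointification step. Given \(\DC(X)\) and nonempty finite \(X_n \subseteq X\) for \(n \in \omega\), my first move is to apply Corollary~\ref{cor:anteAlessandro} with \(I = \omega\). This is legitimate because \(\Pow(\omega)\) is linearly orderable in \(\ZF\) without any appeal to choice: I would identify it with \({}^\omega 2\) and take the lexicographic order on characteristic functions. Applying the corollary yields \(\emptyset \neq Y_n \subseteq X_n\) such that any two \(Y_n, Y_m\) are either equal or disjoint.

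From here the argument should proceed exactly as in Theorem~\ref{th:Alessandro}. I would let \(I \coloneqq \setof{n \in \omega}{\FORALL{m < n} Y_m \neq Y_n}\), so that \((Y_i)_{i \in I}\) is a pairwise disjoint reindexing of \((Y_n)_{n \in \omega}\). If \(I\) is finite, I would pick an element from each \(Y_i\) using only finite choice. If \(I\) is infinite, then \(I \approx \omega\), and Proposition~\ref{prop:basicpropertiesofDC&AComega}\ref{prop:basicpropertiesofDC&AComega-d}, applied inside \(X\) to the disjoint family \((Y_i)_{i \in I}\), should furnish \(y_i \in Y_i\) for every \(i \in I\). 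Either way, for each \(n \in \omega\) I would set \(x_n \coloneqq y_{i(n)}\), where \(i(n)\) is the unique \(i \in I\) with \(Y_n = Y_i\); then \(x_n \in X_n\), producing the desired choice function.

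I do not foresee any real obstacle. The only subtlety will be to notice that the hypothesis ``\(\Pow(I)\) linearly orderable'' in Corollary~\ref{cor:anteAlessandro} is automatically satisfied when \(I = \omega\), which is precisely the instance we need. The finiteness of the \(X_n\)'s is exactly what will replace the use of \(\AComega(\R)\) in Theorem~\ref{th:Alessandro}: it guarantees that the sets \(A_n\) produced in the proof of Lemma~\ref{lem:combinatorial} are finite, so a linear order on \(\Pow(\omega)\) suffices to choose from them canonically.
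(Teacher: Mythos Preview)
Your proposal is correct and follows precisely the route the paper indicates: it reruns the argument of Theorem~\ref{th:Alessandro}, replacing the appeal to Lemma~\ref{lem:combinatorial} (which needed \(\AComega(\R)\)) by Corollary~\ref{cor:anteAlessandro}, whose hypotheses are met since \(I=\omega\) makes \(\Pow(I)\) linearly orderable and the \(X_n\) are finite. The remaining steps---extracting a disjoint subfamily indexed by \(I\), handling the finite and infinite cases for \(I\), and invoking Proposition~\ref{prop:basicpropertiesofDC&AComega}\ref{prop:basicpropertiesofDC&AComega-d}---match the paper's intended argument exactly.
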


\subsection{Does \( \DC ( X ) \) imply \( \AComega ( X ) \)?}\label{sec:DCIAC}
By Theorem~\ref{th:folklore} and Theorem~\ref{th:Alessandro} 
\begin{equation}\label{eq:mainquestion}
\FORALL{X} ( \DC ( X ) \IMPLIES \AComega ( X ) ) 
\end{equation}
follows from either one of the following assumptions:
\begin{itemize}
\item
\( X \times 2 \precsim X \) for all infinite \( X \), 
\item
\( \AComega ( \R ) \).
\end{itemize}
Sageev in~\cite{Sageev:1975ff} proved that ``\( X \times 2 \precsim X \) for all infinite \( X \)'' does not imply \( \AComega ( \R ) \), while Monro in~\cite{Monro:1974aa} proved that \( \DC \) (and hence the weaker \( \AComega ( \R ) \)) does not imply ``\( X \times 2 \precsim X \) for all infinite \( X \)''.
So neither assumption implies the other.

The obvious question is if~\eqref{eq:mainquestion} is a theorem of \( \ZF \).
Suppose that there is a set \( X \) such that \( \DC ( X ) \wedge \neg \AComega ( X ) \).
By the proof of Lemma~\ref{lem:combinatorial} the set \( A \coloneqq F [ X ] \subseteq \Pow ( \omega ) \) is such that \( \DC ( A ) \) holds, as \( A \) is the surjective image of \( X \), and \( \AComega ( A ) \) fails, as otherwise, arguing as in Theorem~\ref{th:Alessandro}, \( \AComega ( X ) \) would hold.
Therefore if~\eqref{eq:mainquestion} fails, then the witness of this failure can be taken to be a subset of \( \R \).
In Section~\ref{sec:main} we construct a model of \( \ZF \) in which
\begin{equation}\label{eq:mainquestion2}
 \EXISTS{ A \subseteq \R} \left ( \DC ( A ) \wedge \neg \AComega ( A ) \right )
\end{equation}
showing that~\eqref{eq:mainquestion} is not a theorem of \( \ZF \).

By Lemma~\ref{lem:propertiesofA}\ref{lem:propertiesofA-c}, any \( A \) as in~\eqref{eq:mainquestion2} cannot contain a nonempty perfect set. 
Moreover, such a set \( A \) also needs to be Dedekind-infinite: indeed, \( A \) cannot be closed, as otherwise, by the usual Cantor-Bendixson argument, it would either be countable, contradicting \( \neg \AComega(A) \), or else it would contain a nonempty perfect set, which we already excluded; therefore \( A \) is not closed, and, by Lemma~\ref{lem:firstcountable}, \( A \) is Dedekind-infinite.

It can be shown that~\eqref{eq:mainquestion2} fails both in Cohen's first model (Proposition~\ref{prop:fcstatement}) and in the Feferman-Levy model (Proposition~\ref{prop:FefermanLevy}), and hence in both these models~\eqref{eq:mainquestion} holds.

\subsection{An equivalent formulation of \( \DC \)}
A \markdef{tree on \( X \)} is a \( T \subseteq {}^{< \omega } X \) that is closed under initial segments, that is if \( t \in T \) and \( s \subseteq t \) then \( s \in T \).
A tree \( T \) on \( X \) is \markdef{pruned} if for every \( t \in T \) there is \( s \in T \) such that \( t \subset s \).
A \markdef{branch} of \( T \) is a \( b \colon \omega \to X \) such that \( \FORALL{n \in \omega } ( b \restriction n \in T) \).
A tree \( T \) is \markdef{ill-founded} if it has a branch; otherwise it is \markdef{well-founded}.
Let
\begin{equation*}
\text{Any nonempty pruned tree on \( X \) is ill-founded} \tag{\( \DC _ \omega ( X ) \)}
\end{equation*}
and let \( \DC_ \omega \) be \( \FORALL{X} \DC_ \omega ( X ) \).
As \( \DC \) is equivalent to \( \DC_ \omega \) (Corollary~\ref{cor:versionsofDC} below), the axiom of Dependent Choice is often stated as \( \DC_\omega \).
The advantage of this formulation is that it can be generalized to ordinals larger than \( \omega \).

\begin{proposition}\label{prop:DCtrees}
\( \DC_\omega ( X ) \IFF \DC ( {}^{< \omega } X ) \), for every nonempty set \( X \).
\end{proposition}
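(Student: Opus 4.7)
The plan is to prove each direction separately; the $(\Leftarrow)$ direction is routine, whereas $(\Rightarrow)$ demands an encoding of $R$-chains as branches of a tree on $X$, together with a small trick to avoid ``spurious'' branches.

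For $(\Leftarrow)$, assuming $\DC({}^{<\omega}X)$, I would take a nonempty pruned tree $T \subseteq {}^{<\omega}X$ and define the ``immediate successor'' relation on $T$, extended to a total relation $R$ on ${}^{<\omega}X$ by sending every $\tau \notin T$ to $\emptyset$ (which lies in $T$ because $T$ is nonempty and closed under initial segments). Prunedness of $T$ makes $R$ total. Applying $\DC({}^{<\omega}X)$ with Proposition~\ref{prop:basicpropertiesofDC&AComega}\ref{prop:basicpropertiesofDC&AComega-b} produces an $R$-chain $(\tau_n)_{n \in \omega}$ with $\tau_0 = \emptyset$; a quick induction yields $\tau_n \in T$ and $\lh(\tau_n) = n$, so $\bigcup_n \tau_n \in {}^\omega X$ is a branch of $T$.

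For $(\Rightarrow)$, assuming $\DC_\omega(X)$, I would fix a total $R$ on ${}^{<\omega}X$ and a starting point $s_0$; by Proposition~\ref{prop:basicpropertiesofDC&AComega}\ref{prop:basicpropertiesofDC&AComega-b} it is enough to find an $R$-chain from $s_0$. The case $|X|=1$ is immediate because ${}^{<\omega}X \approx \omega$, so assume $|X| \geq 2$ and fix distinct $\alpha, \beta \in X$. First thin $R$ by setting $\ell(s) \coloneqq \min \setof{\lh(s')}{s' \in R(s)}$ and $R' \coloneqq \setof{(s, s') \in R}{\lh(s') = \ell(s)}$: then $R'$ is total and every $R'$-successor of $s$ has length exactly $\ell(s)$. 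Use the self-delimiting block code $\mathrm{enc}(s)$ that lists $\lh(s)$ copies of $\alpha$, then $\beta$, then the entries of $s$, and extend concatenatively to $\iota(s_0, \dots, s_n) \coloneqq \mathrm{enc}(s_0) \conc \cdots \conc \mathrm{enc}(s_n)$; this realizes a concrete instance of the prefix-preserving injection ${}^{<\omega}({}^{<\omega}X) \precsim {}^{<\omega}X$ from Lemma~\ref{lem:sums}\ref{lem:sums-b}. Let
\[
T \coloneqq \setof{\tau \in {}^{<\omega}X}{\tau \text{ is an initial segment of } \iota(s_0, \dots, s_n) \text{ for some } R'\text{-chain } (s_0, \dots, s_n)}.
\]
Then $T$ is a nonempty tree on $X$, pruned because $R'$ is total and each $\mathrm{enc}(s)$ has length $2\lh(s)+1 \geq 1$. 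By $\DC_\omega(X)$, $T$ has a branch $b \in {}^\omega X$.

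The crux is that $b$ parses into a genuine infinite $R'$-chain, and the thinning step is exactly what makes this work: once the entry $s_j$ has been decoded, the following block of $b$ must begin with the forced sequence of $\ell(s_j)$ copies of $\alpha$ and then the forced $\beta$, after which exactly $\ell(s_j)$ characters of data encode $s_{j+1}$. Thus every block of $b$ has a priori determined length, the branch cannot contain an unbounded $\alpha$-run, and block-by-block parsing recovers $(s_1, s_2, \dots)$. The main obstacle, which this thinning sidesteps, is that for the original relation $R$ an analogous tree could have branches whose $\alpha$-markers in some block go on forever, yielding no valid chain at all.
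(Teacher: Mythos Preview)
Your proof is correct and follows the same strategy as the paper: thin $R$ to the minimal-length-successor relation $R'$, form a pruned tree of (encoded) finite $R'$-chains, and parse the resulting branch deterministically using that $\lh(s_{j+1})=\ell(s_j)$ is fixed in advance. The paper's execution is a bit leaner—it concatenates the $s_i$ directly (starting the chain from $\emptyset$ after disposing of the case $\emptyset \mathrel{R} \emptyset$) rather than using your self-delimiting code $\alpha^{\lh(s)}\beta\, s$, so no marker symbols and no case split on $|X|$ are needed; once you have the thinning, the block lengths are already determined, which makes your length-prefix redundant.
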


\begin{proof}
(\( \Rightarrow \)) 
Suppose \( R \) is a binary relation on \( {}^{< \omega } X \) such that \( \FORALL{s} \EXISTS{t} ( s \mathrel{R} t ) \).
If \( \emptyset \mathrel{R} \emptyset \), then \( \seqLR{ \emptyset , \emptyset , \dots } \) is an \( R \)-chain as required, so we may assume otherwise.
Let \( R' \subseteq R \) be the sub-relation on \( {}^{< \omega } X \) defined by
\[
s \mathrel{R}' t \IFF s \mathrel{R} t \wedge \FORALL{t'} ( s \mathrel{R} t' \IMPLIES \lh ( t' ) \ge \lh ( t ) ) .
\]
The relation \( R' \) is total and any \( R' \)-chain is an \( R \)-chain.
Then 
\[
T = \setof{t \in {}^{< \omega } X}{ \EXISTS{s_0 , \dots , s_n } ( \emptyset \mathrel{R}' s_0 \mathrel{R}' \dots \mathrel{R}' s_n \AND t \subseteq s_0 \conc s_1 \conc \dots \conc s_n ) } 
\]
is a pruned tree on \( X \), so it has a branch.
By the minimality assumption of \( R' \), given a branch \( b \) of \( T \) one can construct inductively an \( R' \)-chain \( ( s_n )_{ n \in \omega } \) such that \( s_0 \conc s_1 \conc \dots \conc s_n \subseteq b \) for all \( n \).

\smallskip

(\( \Leftarrow \))
If \( T \) is a pruned tree on \( X \), let \( R \subseteq T \times T \) be defined by
\[
s \mathrel{R}t \IFF s \subset t \wedge \lh ( s ) + 1 = \lh ( t ) .
\]
As \( T \subseteq {}^{< \omega } X \) then \( \DC ( T ) \) holds, and since \( R \) is total, as \( T \) is pruned, there is an \( R \)-chain.
Any such chain yields a branch of \( T \).
\end{proof}

\begin{corollary}\label{cor:versionsofDC}
\( \DC \IFF \DC_ \omega \).
\end{corollary}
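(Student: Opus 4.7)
The plan is to derive both implications directly from Proposition~\ref{prop:DCtrees}, together with the surjective-image property recorded in Proposition~\ref{prop:basicpropertiesofDC&AComega}\ref{prop:basicpropertiesofDC&AComega-a}. Proposition~\ref{prop:DCtrees} already identifies \( \DC_\omega ( X ) \) with \( \DC ( {}^{< \omega } X ) \) for every nonempty \( X \), so what remains is to bridge between \( \DC ( X ) \) and \( \DC ( {}^{< \omega } X ) \) uniformly in \( X \); the empty set will appear as a separate, vacuous case in each direction.

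For the direction \( \DC \IMPLIES \DC_\omega \), I would fix an arbitrary \( X \) and aim to establish \( \DC_\omega ( X ) \). If \( X = \emptyset \), then \( {}^{< \omega } X = \set{\emptyset} \), and the only candidate tree \( T = \set{\emptyset} \) fails to be pruned, so \( \DC_\omega ( \emptyset ) \) holds vacuously. If \( X \neq \emptyset \), apply the global assumption \( \DC \) at the set \( {}^{< \omega } X \) to obtain \( \DC ( {}^{< \omega } X ) \), and invoke Proposition~\ref{prop:DCtrees} to convert this to \( \DC_\omega ( X ) \).

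For the direction \( \DC_\omega \IMPLIES \DC \), I would again fix \( X \). The case \( X = \emptyset \) is vacuous since no binary relation on \( \emptyset \) is total. For \( X \neq \emptyset \), apply \( \DC_\omega \) at \( X \) itself and use Proposition~\ref{prop:DCtrees} to pass to \( \DC ( {}^{< \omega } X ) \). To descend from \( \DC ( {}^{< \omega } X ) \) back to \( \DC ( X ) \), I would fix an \( x_0 \in X \) and define the surjection \( \pi \colon {}^{< \omega } X \to X \) by \( \pi ( \emptyset ) = x_0 \) and \( \pi ( \seq{y_0 , \dots , y_n} ) = y_n \); then Proposition~\ref{prop:basicpropertiesofDC&AComega}\ref{prop:basicpropertiesofDC&AComega-a} yields \( \DC ( X ) \).

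No genuine obstacle arises, since Proposition~\ref{prop:DCtrees} has absorbed the substantive content of the equivalence. The only delicate bookkeeping is to handle the two vacuous cases \( X = \emptyset \) separately, and to make sure a basepoint \( x_0 \) is available for the surjection \( {}^{< \omega } X \twoheadrightarrow X \) in the reverse direction, which is exactly why the reduction requires \( X \neq \emptyset \).
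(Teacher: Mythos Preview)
Your proposal is correct and matches the paper's approach: the corollary is stated there without proof, as an immediate consequence of Proposition~\ref{prop:DCtrees}, and you have simply spelled out the details (including the surjection \( {}^{< \omega } X \to X \) needed to recover \( \DC ( X ) \) via Proposition~\ref{prop:basicpropertiesofDC&AComega}\ref{prop:basicpropertiesofDC&AComega-a}). One minor wording slip: the empty relation on \( \emptyset \) \emph{is} vacuously total, so the correct reason \( \DC ( \emptyset ) \) holds vacuously is that no \emph{nonempty} total relation exists on \( \emptyset \).
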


\begin{proposition}\label{prop:DComega}
Let \( X \) be a set.
\begin{enumerate-(a)}
\item\label{prop:DComegaa}
\( \DC_ \omega ( X ) \IMPLIES \DC ( X ) \).
\item\label{prop:DComegab}
\( \DC_ \omega ( X ) \IMPLIES \AComega ( X ) \).
\end{enumerate-(a)}
\end{proposition}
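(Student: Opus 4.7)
The plan is to prove both parts directly from the definition of $\DC_\omega(X)$ by exhibiting, in each case, a nonempty pruned tree on $X$ whose branches encode the desired object.

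For part~\ref{prop:DComegaa}, given a nonempty total relation $R\subseteq X^2$ (so in particular $X\neq \emptyset$), I would let
\[
T = \{\langle x_0, \ldots, x_n \rangle \in {}^{<\omega}X : x_i \mathrel{R} x_{i+1} \text{ for all } i < n\}.
\]
This is a tree containing $\emptyset$ and, by totality of $R$, pruned: every node $\langle x_0, \ldots, x_n\rangle$ with $n\geq 0$ extends via some $R$-successor of $x_n$, while $\emptyset$ extends to any singleton $\langle x\rangle$ since $X\neq \emptyset$. A branch produced by $\DC_\omega(X)$ is then, by definition, an $R$-chain.

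For part~\ref{prop:DComegab}, given $(A_n)_{n \in \omega}$ with each $\emptyset \neq A_n \subseteq X$, I would take
\[
T = \{\langle a_0, \ldots, a_{n-1}\rangle \in {}^{<\omega}X : a_i \in A_i \text{ for all } i < n\}.
\]
This tree contains $\emptyset$ and is pruned precisely because each $A_n$ is nonempty. A branch $(a_n)_{n\in\omega}$ given by $\DC_\omega(X)$ is exactly a choice function for $(A_n)_{n\in\omega}$.

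I do not expect any serious obstacle: both constructions are immediate from the definitions, and the only subtlety is checking that $\emptyset$ belongs to $T$ and that it can be extended (trivial in part~\ref{prop:DComegab}, and requiring $X\neq\emptyset$ in part~\ref{prop:DComegaa}). As a sanity check, both parts can also be obtained from results already in the paper: by Proposition~\ref{prop:DCtrees}, $\DC_\omega(X) \IFF \DC({}^{<\omega}X)$, and then part~\ref{prop:DComegaa} follows from Proposition~\ref{prop:basicpropertiesofDC&AComega}\ref{prop:basicpropertiesofDC&AComega-a} because $X$ is a surjective image of ${}^{<\omega}X$ (via the ``last coordinate'' map, with $\emptyset$ sent to some fixed point of $X$), while part~\ref{prop:DComegab} follows from Proposition~\ref{prop:basicpropertiesofDC&AComega}\ref{prop:basicpropertiesofDC&AComega-e} because $X \times \omega$ is similarly a surjective image of ${}^{<\omega}X$.
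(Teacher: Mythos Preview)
Your proposal is correct. For part~\ref{prop:DComegab} you give exactly the paper's argument: the tree of partial choice sequences is pruned, and a branch is a choice function.

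For part~\ref{prop:DComegaa} there is a small difference in approach. The paper does not build the tree of finite $R$-chains directly; instead it observes that $X$ injects into ${}^{<\omega}X$ and invokes Proposition~\ref{prop:DCtrees} (so $\DC_\omega(X)\IFF\DC({}^{<\omega}X)$), from which $\DC(X)$ follows by Proposition~\ref{prop:basicpropertiesofDC&AComega}\ref{prop:basicpropertiesofDC&AComega-a}. Your direct construction is equally valid and arguably more self-contained, since it avoids the detour through ${}^{<\omega}X$; the paper's route is shorter only because Proposition~\ref{prop:DCtrees} is already on hand. You in fact mention this alternative in your sanity check, so you have both arguments.
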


\begin{proof}
\( X \) injects into \( {}^{< \omega } X \), so part~\ref{prop:DComegaa} holds by Proposition~\ref{prop:DCtrees}.

For part~\ref{prop:DComegab} argue as follows.
If \( (A_n)_{ n \in \omega } \) is a sequence of nonempty subsets of \( X \), then \( \setof{ \seq{x_0 , \dots , x_{ n - 1 } } }{ \FORALL{i < n} ( x_i \in A_i ) } \) is a pruned tree on \( X \), and any branch of it is a sequence \( ( a_n )_{ n \in \omega } \) such that \( a_n \in A_n \) for all \( n \in \omega \).
\end{proof}

In light of Proposition~\ref{prop:DComega}, our main result, Theorem~\ref{th:Lorenzo}, tells us it is consistent with \( \ZF \) that there is a set \( A\subseteq \R \) for which \( \DC (A) \) holds but \( \DC_ \omega ( A ) \) fails.

\section{Symmetric extensions}\label{ssec:symmetricextensions}
The model we construct in Section~\ref{sec:main} is an iterated symmetric extension.
For the reader's convenience, let us recall a few facts about forcing and symmetric extensions. 

If \( \forcing{P} \) is a forcing notion, i.e.~a preordered set with a maximum \( 1_{ \forcing{P}} \), we convene that \( p \leq_{ \forcing{P}} q \) means that \( p \) is \markdef{stronger} than \( q \).
When there is no danger of confusion, we drop the subscript \( \forcing{P} \).
Dotted letters \( \dot{x}, \dot{y} , \dots \) vary over the class of \( \forcing{P} \)-names, \( \check{x} \) is the canonical \( \forcing{P} \)-name for \( x \), while \( \dot{G} \) is the \( \forcing{P} \)-name for the generic filter.
If \( F \) is a set of \( \forcing{P} \)-names, then, following Karagila's notation \cite{Karagila:2019aa}, \( F^\bullet \) is the \( \forcing{P} \)-name \( \setof{ ( \dot{x} , 1 ) }{ \dot{x} \in F } \). 
This notation extends naturally to ordered pairs and sequences, so \( ( \dot{x} , \dot{y} )^\bullet \coloneqq \setLR{ \set{ \dot{x}}^\bullet, \set{ \dot{x},\dot{y} }^\bullet }^\bullet \) and so on.
If \( G \subseteq \forcing{P} \) is \( \Vv \)-generic, then \( \dot{x}_G \) is the object in \( \Vv [ G ] \) obtained by evaluating \( \dot{x} \) with \( G \).

Let \( \forcing{P} \) be a forcing notion.
Every automorphism \( \pi \in \Aut ( \forcing{P} ) \) acts canonically on \( \forcing{P} \)-names as follows: given \( \dot{x} \) a \( \forcing{P} \)-name, 
\[
 \pi \dot{x} = \setof{ ( \pi \dot{y} , \pi p ) }{ ( \dot{y} , p ) \in \dot{x}}.
\]

\begin{lemma}[Symmetry Lemma, {\cite{Jech:2003pd}*{Lemma 14.37}}]
Let \( \forcing{P} \) be a forcing notion, \( \pi \in \Aut ( \forcing{P} ) \) and \( \dot{x}_1 , \dots , \dot{x}_n \) be \( \forcing{P} \)-names. 
For every formula \( \upvarphi ( x_1, \dots , x_n ) \)
\[
p \forces \upvarphi ( \dot{x}_1 , \dots , \dot{x}_n ) \IFF \pi p \forces \upvarphi ( \pi \dot{x}_1 , \dots , \pi \dot{x}_n ) .
\]
\end{lemma}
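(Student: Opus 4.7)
The plan is to proceed by induction on the complexity of the formula $\upvarphi$. First, observe that it suffices to prove one direction, say ($\Rightarrow$): if we have shown $p \forces \upvarphi(\dot{x}_1,\dots,\dot{x}_n) \IMPLIES \pi p \forces \upvarphi(\pi \dot{x}_1, \dots, \pi \dot{x}_n)$ for all $\pi \in \Aut(\forcing{P})$ and all names, then applying this to $\pi^{-1}$, $\pi p$, and $\pi \dot{x}_i$ (and using $\pi^{-1} \in \Aut(\forcing{P})$, $\pi^{-1}\pi p = p$, $\pi^{-1}\pi \dot{x}_i = \dot{x}_i$) yields the reverse implication.

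The base case handles the atomic formulas $\dot{x} \in \dot{y}$ and $\dot{x} = \dot{y}$. These I would treat by a simultaneous nested induction on $\max\set{\mathrm{rank}(\dot{x}), \mathrm{rank}(\dot{y})}$, unwinding the standard recursive definition of the forcing relation on atomic formulas. The key fact that makes this work is that $\pi$ preserves the membership structure of names, namely
\[
(\dot{z}, r) \in \dot{y} \IFF (\pi \dot{z}, \pi r) \in \pi \dot{y},
\]
together with $\mathrm{rank}(\pi \dot{z}) = \mathrm{rank}(\dot{z})$, which allows the inductive hypothesis to be invoked on the name-components. Moreover, $\pi$ is an order-automorphism of $\forcing{P}$, so the density/compatibility conditions appearing in the recursive clauses for $\forces \dot{z} \in \dot{y}$ and $\forces \dot{x} = \dot{y}$ transport along $\pi$.

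For the inductive step one uses two bijectivity features of $\pi$: it sends $\setof{q \in \forcing{P}}{q \leq p}$ bijectively to $\setof{q' \in \forcing{P}}{q' \leq \pi p}$ (since $\pi$ preserves $\leq$), and the map $\dot{y} \mapsto \pi \dot{y}$ is a bijection on the class of all $\forcing{P}$-names. The conjunction case is immediate from the inductive hypothesis applied to each conjunct. For negation, $p \forces \neg \upvarphi(\dot{x}_i)$ means no $q \leq p$ satisfies $q \forces \upvarphi(\dot{x}_i)$; by induction this is equivalent to no $\pi q \leq \pi p$ satisfying $\pi q \forces \upvarphi(\pi \dot{x}_i)$, which is $\pi p \forces \neg \upvarphi(\pi \dot{x}_i)$. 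For the existential quantifier, $p \forces \EXISTS{x} \upvarphi(x, \dot{x}_i)$ is (equivalent to) the density below $p$ of $\setof{q \leq p}{\EXISTS{\dot{y}}\, q \forces \upvarphi(\dot{y}, \dot{x}_i)}$, and under the two bijections above this translates precisely to the density below $\pi p$ of $\setof{q' \leq \pi p}{\EXISTS{\dot{y}'}\, q' \forces \upvarphi(\dot{y}', \pi \dot{x}_i)}$, i.e.\ $\pi p \forces \EXISTS{x} \upvarphi(x, \pi \dot{x}_i)$.

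The main obstacle is the atomic base case: the simultaneous induction on name-ranks must be set up carefully to match the recursive definition of $\forces$ one chooses (the definition via $\in^*$ and $=^*$ or the Shoenfield-style definition through dense sets). Once this bookkeeping is in place, the inductive step for logical connectives and quantifiers is essentially a formal transcription, exploiting only that $\pi$ is a bijection on $\forcing{P}$ and on names preserving the order and the membership relation between names.
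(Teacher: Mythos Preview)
Your proof outline is correct and is the standard induction-on-complexity argument. Note, however, that the paper does not give its own proof of the Symmetry Lemma: it is stated with a citation to Jech's textbook (Lemma~14.37) and used as a black box, so there is no in-paper proof to compare against. Your approach is essentially the one found in Jech.
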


Let \( \mathcal{G} \) be a subgroup of \( \Aut ( \forcing{P}) \).
A nonempty collection \( \mathcal{F} \) of subgroups of \( \mathcal{G} \) is a \markdef{filter} on \( \mathcal{G} \) if it is closed under supergroups and finite intersections. 
A filter \( \mathcal{F} \) on \( \mathcal{G} \) is said to be \markdef{normal} if for every \( H \in \mathcal{F} \) and \( \pi \in \mathcal{G} \), the conjugated subgroup \( \pi H \pi^{-1} \) belongs to \( \mathcal{F} \) as well. 

We say that the triple \( \seq{ \forcing{P} , \mathcal{G} , \mathcal{F} } \) is a \markdef{symmetric system} if \( \forcing{P} \) is a forcing notion, \( \mathcal{G } \) is a subgroup of \( \Aut ( \forcing{P} ) \) and \( \mathcal{F} \) is a normal filter on \( \mathcal{G} \). 
Given a \( \forcing{P} \)-name \( \dot{x} \), we say that \( \dot{x} \) is \markdef{\( \mathcal{F} \)-symmetric} if there exists \( H \in \mathcal{F} \) such that for all \( \pi \in H \), \( \pi \dot{x} = \dot{x} \). 
This definition extends by recursion: \( \dot{x} \) is \markdef{hereditarily \( \mathcal{F} \)-symmetric}, if \( \dot{x} \) is \( \mathcal{F} \)-symmetric and every name \( \dot{y} \in \dom ( \dot{x} ) \) is hereditarily \( \mathcal{F} \)-symmetric. 
We denote by \( \HS _\mathcal{F} \) the class of all hereditarily \( \mathcal{F} \)-symmetric names.

\begin{theorem}[{\cite{Jech:2003pd}*{Lemma 15.51}}]
Suppose that \( \seq{ \forcing{P} , \mathcal{G} , \mathcal{F} } \) is a symmetric system and \( G \subseteq \forcing{P} \) is a \( \Vv \)-generic filter. 
Denote by \( \mathcal{N} \) the class \( \setof{ \dot{x}_G }{ \dot{x} \in \HS _\mathcal{F}} \), then \( \mathcal{N} \) is a transitive model of \( \ZF \), and \( \Vv \subseteq \mathcal{N} \subseteq \Vv [ G ] \).
\end{theorem}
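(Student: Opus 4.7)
The plan is to verify separately (i) $\mathcal{N}$ is transitive, (ii) $\Vv\subseteq\mathcal{N}\subseteq\Vv[G]$, and (iii) $\mathcal{N}$ satisfies every axiom of $\ZF$. For transitivity, if $\dot x\in\HS_\mathcal{F}$ then by definition every $\dot y\in\dom(\dot x)$ is also in $\HS_\mathcal{F}$; since elements of $\dot x_G$ are precisely evaluations $\dot y_G$ for such $\dot y$, they lie in $\mathcal{N}$. The inclusion $\mathcal{N}\subseteq\Vv[G]$ is trivial because $\HS_\mathcal{F}$ is a subclass of the class of all $\forcing P$-names. For $\Vv\subseteq\mathcal{N}$, one shows by $\in$-induction that each canonical name $\check x$ is hereditarily $\mathcal{F}$-symmetric: every $\pi\in\mathcal{G}$ fixes $\check x$ (since $\pi$ acts as the identity on names of the form $\check y$), so $\mathcal{G}\in\mathcal{F}$ witnesses $\mathcal{F}$-symmetry, and $\dom(\check x)=\{\check y : y\in x\}$ is handled by the induction hypothesis.

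For the axioms, Extensionality and Foundation follow from transitivity of $\mathcal{N}$. Infinity is automatic from $\omega\in\Vv\subseteq\mathcal{N}$. For Pairing and Union, given $\dot x,\dot y\in\HS_\mathcal{F}$ with stabilizers $H_{\dot x},H_{\dot y}\in\mathcal{F}$, I would check that the names $(\dot x,\dot y)^\bullet$ and $\bigcup\dot x$ (built by collecting the pairs $(\dot z,p\wedge q)$ for $(\dot z,p)\in\dom(\dot w)$ and $(\dot w,q)\in\dot x$) are fixed by $H_{\dot x}\cap H_{\dot y}\in\mathcal{F}$ and $H_{\dot x}\in\mathcal{F}$ respectively, and that hereditary symmetry is preserved. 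For Power set, given $\dot x\in\HS_\mathcal{F}$ I would form the symmetric name $\dot y = \{(\dot z,1) : \dot z\in\HS_\mathcal{F}\text{ and }\dom(\dot z)\subseteq\dom(\dot x)\text{ and }H_{\dot x}\subseteq\Sym(\dot z)\}$ (suitably restricted to a set by bounding the rank of the names), which is fixed by $H_{\dot x}$ and whose evaluation contains every subset of $\dot x_G$ that lies in $\mathcal{N}$.

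The main obstacle is Replacement (and with it Separation), because one must build, from a class function given by a formula, a single symmetric name capturing its range on a given symmetric set. The key trick is that $\HS_\mathcal{F}$ is a definable class in $\Vv$: using Reflection inside $\Vv$ one chooses a sufficiently large $\Vv_\alpha$ such that the relevant instance of the defining formula (using the forcing relation $\forces$, which is definable in $\Vv$) is absolute between $\Vv$ and $\Vv_\alpha$. Then for a symmetric name $\dot x$ with stabilizer $H\in\mathcal{F}$, one forms the name $\dot y = \{(\dot z,p)^\bullet : \dot z\in\HS_\mathcal{F}\cap\Vv_\alpha, \ p\in\forcing P, \ p\forces\upvarphi(\dot u,\dot z)\text{ for some }\dot u\in\dom(\dot x)\}$ and verifies, via the Symmetry Lemma, that $H$ stabilises $\dot y$: if $\pi\in H$ and $p\forces\upvarphi(\dot u,\dot z)$ then $\pi p\forces\upvarphi(\pi\dot u,\pi\dot z)$, and $\pi\dot u\in\dom(\dot x)$ since $\pi\dot x=\dot x$, while $\pi\dot z\in\HS_\mathcal{F}\cap\Vv_\alpha$ by absoluteness and the reflection choice. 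Hence $\dot y\in\HS_\mathcal{F}$ and $\dot y_G$ contains the image required by Replacement, so Separation applied inside $\mathcal{N}$ isolates the actual range.
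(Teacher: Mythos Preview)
The paper does not give its own proof of this theorem: it is quoted verbatim as a known result, with a citation to Jech's \emph{Set Theory}, Lemma~15.51, and no argument is supplied. So there is nothing in the paper to compare your proposal against; what you have written is essentially a sketch of the standard textbook proof (the one in Jech), and the overall architecture---transitivity from the hereditary condition, $\Vv\subseteq\mathcal{N}$ via $\check{x}\in\HS_\mathcal{F}$, and the axiom-by-axiom verification using the Symmetry Lemma and normality of $\mathcal{F}$---is correct.

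One concrete slip: in your Power Set name you impose the extra condition $H_{\dot x}\subseteq\operatorname{Sym}(\dot z)$. This is too restrictive and will in general miss subsets of $\dot x_G$ that lie in $\mathcal{N}$. For instance, if $\dot x=\dot A$ in the first Cohen model then $H_{\dot x}=\mathcal{G}_0$, and your condition would force $\dot z$ to be fixed by \emph{every} permutation, so singletons like $\{a_0\}$ would not appear. The correct name is simply $\{(\dot z,1):\dot z\in\HS_\mathcal{F},\ \dom(\dot z)\subseteq\dom(\dot x)\}$; this is already a set, already hereditarily symmetric, and is fixed by $H_{\dot x}$ because normality of $\mathcal{F}$ guarantees $\HS_\mathcal{F}$ is closed under the action of $\mathcal{G}$, while $\pi\in H_{\dot x}$ permutes $\dom(\dot x)$. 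Any $b\subseteq\dot x_G$ in $\mathcal{N}$ is then captured by the derived name $\{(\dot w,p)\in\dom(\dot x)\times\forcing{P}: p\forces\dot w\in\dot b\}$, whose stabilizer contains $\operatorname{Sym}(\dot x)\cap\operatorname{Sym}(\dot b)\in\mathcal{F}$ but need not contain $H_{\dot x}$ itself. A smaller point: in your Replacement name the expression $(\dot z,p)^\bullet$ is not what you want; the pairs $(\dot z,p)$ should enter the name directly, and you should note explicitly that $\pi\dot z\in\HS_\mathcal{F}$ uses normality of $\mathcal{F}$.
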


The class \( \mathcal{N} \) is also known as a \markdef{symmetric extension} of \( \Vv \). 
Symmetric extensions are often used to produce models of \( \ZF \) in which the axiom of choice fails. 
We focus on this notion by discussing the construction due to Cohen of a symmetric extension in which there is an infinite, D-finite set of reals. 
This model will be the first step in the forcing iteration in Theorem~\ref{th:Lorenzo}. 

\subsection{The first Cohen model}\label{subsec:firstCohen}

Let \( \forcing{P}_0 \) be the forcing that adds countably many Cohen reals, i.e.
\[
\forcing{P}_0 = \setofLR{ p }{ \EXISTS{I \subseteq \omega } ( p \colon I \to {}^{<\omega} 2 \text{, and \( I \) is finite}) } ,
\]
with \( p \leq q \) if \( \dom ( p ) \supseteq \dom ( q ) \) and \( p ( n ) \supseteq q ( n ) \) for all \( n \in \dom ( q ) \). 
Although this is not the standard presentation of such a forcing, this way of defining \( \forcing{P}_0 \) will become useful in the Section~\ref{sec:main}. 
Let \( \dot{a}_n \) be the canonical name for the \( n \)-th Cohen real, that is
\begin{equation}\label{eq:nameforCohenreal}
\dot{a}_n = \setof{ ( \check{ ( k , i ) } , p ) }{ p \in \forcing{P}_0 \wedge n \in \dom p \wedge p ( n ) ( k ) = i } . 
\end{equation}
Observe that \( \dot{A} \coloneqq \setof{ \dot{a}_n }{ n \in \omega } ^\bullet \) is forced to be a dense subset of \( {}^ \omega 2 \).

Every permutation \( \pi \) on \( \omega \) induces an automorphism of \( \forcing{P}_0 \) as follows: given \( p \in \forcing{P}_0 \), we let \( \pi p \in \forcing{P}_0 \) be defined by 
\[
\FORALL{ n \in \dom ( p ) } \bigl ( \pi p ( \pi n ) = p ( n ) \bigr ) .
\]
We conflate the notation by using the same symbol \( \pi \) to denote both the permutation and the automorphism on \( \forcing{P}_0 \) it induces. 
Let \( \mathcal{G}_0 \) be the group of all such automorphisms. 
For every finite \( E \subset \omega \), let \( \Fix ( E ) \) be the subgroup of \( \mathcal{G}_0 \) of all those automorphisms induced by permutations that pointwise fix the set \( E \). 
Let \( \mathcal{F}_0 \) be the filter on \( \mathcal{G}_0 \) generated by \( \setof{ \Fix (E) }{ E \subset \omega \text{ finite} } \). 
It is easy to check that \( \mathcal{F}_0 \) is actually a normal filter on \( \mathcal{G}_0 \), and hence \( \seq{ \forcing{P}_0 , \mathcal{G}_0 , \mathcal{F}_0 } \) is a symmetric system. 
Let \( G \) be a \( \Vv \)-generic filter for \( \forcing{P}_0 \), and let \( \mathcal{N}_0 \) be the corresponding symmetric extension, which we call \markdef{first Cohen model}. 

Denote by \( A \) the realization of the name \( \dot{A} \) in \( \Vv [ G ] \), i.e.~the set \( \dot{A}_G \). 
Note that every \( \dot{a}_n \) is in \( \HS _{\mathcal{F}_0 } \) and so is \( \dot{A} \).

\begin{proposition}[{\cite{Jech:2003pd}*{Example 15.52}}]\label{prop:FirstCohen}
\( \mathcal{N}_0 \Models ``A \) is D-finite".
\end{proposition}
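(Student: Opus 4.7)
The plan is a standard symmetry argument: assume toward a contradiction that \( A \) is Dedekind-infinite in \( \mathcal{N}_0 \), pick a hereditarily symmetric name \( \dot f \in \HS_{\mathcal{F}_0} \) for an injection \( f \colon \omega \to A \), a finite support \( E \subset \omega \) such that \( \Fix ( E ) \) pointwise fixes \( \dot f \), and a condition \( p \in G \) forcing that \( \dot f \) is an injection \( \check\omega \to \dot A \). The goal is to exhibit a transposition \( \pi \in \Fix ( E ) \) which forces two distinct canonical Cohen-real names to coincide, contradicting the fact that \( 1_{\forcing{P}_0} \forces \dot a_k \neq \dot a_{k'} \) whenever \( k \neq k' \).

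First, I would pin down a value of \( \dot f \) outside the support. Since \( f \) is injective and \( E \) is finite, only finitely many \( m \in \omega \) can have \( f ( m ) = a_j \) for some \( j \in E \); hence by the forcing theorem there exist \( m \in \omega \), \( k \in \omega \setminus E \) and \( q \le p \) in \( G \) with
\[
q \forces \dot f ( \check m ) = \dot a_k .
\]
Then pick any \( k' \in \omega \setminus ( E \cup \dom ( q ) \cup \set{ k } ) \) and let \( \pi \) be the transposition \( ( k \, k' ) \), which lies in \( \Fix ( E ) \) because neither \( k \) nor \( k' \) belongs to \( E \).

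By the Symmetry Lemma, \( \pi q \forces \pi \dot f ( \check m ) = \pi \dot a_k \); the choice of \( E \) gives \( \pi \dot f = \dot f \), while a direct inspection of~\eqref{eq:nameforCohenreal} yields \( \pi \dot a_k = \dot a_{k'} \). Hence \( \pi q \forces \dot f ( \check m ) = \dot a_{k'} \). Since \( k' \notin \dom ( q ) \), the conditions \( q \) and \( \pi q \) agree on their common domain and therefore admit a common extension \( r \), which forces both \( \dot f ( \check m ) = \dot a_k \) and \( \dot f ( \check m ) = \dot a_{k'} \); so \( r \forces \dot a_k = \dot a_{k'} \), the desired contradiction.

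The only mildly delicate point is the choice of \( k' \): it must lie outside the support \( E \) (so that \( \pi \in \Fix ( E ) \), hence \( \pi \dot f = \dot f \)) and outside \( \dom ( q ) \) (so that \( q \) and \( \pi q \) are compatible). Both exclusions remove only finitely many indices, so such a \( k' \) always exists, and the rest of the argument is a routine application of the Symmetry Lemma.
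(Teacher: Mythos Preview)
Your argument is correct and is precisely the standard symmetry argument that the paper defers to by citing \cite{Jech:2003pd}*{Example 15.52}; there is no separate proof in the paper to compare against. The one point worth stating explicitly is that compatibility of \( q \) and \( \pi q \) uses both \( k' \notin \dom ( q ) \) and that \( \pi \) fixes every index of \( \dom ( q ) \setminus \set{k} \), but you handle this correctly.
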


In \( \mathcal{N}_0 \), the set \( A \), being infinite and D-finite, is certainly not separable as a subspace of \( \R \) ---indeed, every infinite, separable \( \mathrm{T}_1 \) space is Dedekind-infinite. Moreover, \( \DC ( A ) \) also fails, as otherwise \(A\) would be Dedekind-infinite (see the penultimate paragraph of Section~\ref{sec:DCIAC}). 

The simultaneous local failure of both \( \AComega \) and \( \DC \) is not accidental---the next proposition shows that, in the first Cohen model, any \( X \) such that \( \DC ( X ) \) holds must be well-orderable, and hence \( \AComega ( X ) \) holds.

\begin{proposition}\label{prop:fcstatement}
\( \mathcal{N}_0 \Models \forall X \left ( \DC ( X ) \IMPLIES \AC (X) \right ) \).
\end{proposition}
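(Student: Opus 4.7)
The plan is to prove the contrapositive: assuming $X \in \mathcal{N}_0$ is not well-orderable, I will show that $\DC(X)$ must fail. The overall strategy is to reduce to a situation already understood, namely that of an infinite Dedekind-finite subset of $\R$. Concretely, I would aim to construct (in $\mathcal{N}_0$) a surjection $X \twoheadrightarrow B$ onto some infinite, D-finite $B \subseteq \R$. Once that is in hand, Proposition~\ref{prop:basicpropertiesofDC&AComega}\ref{prop:basicpropertiesofDC&AComega-a} yields $\DC(X) \IMPLIES \DC(B)$, and the failure of $\DC(B)$ follows by the same argument used in the penultimate paragraph of Section~\ref{sec:DCIAC}: $B$ cannot be closed in $\R$, for otherwise the usual Cantor-Bendixson analysis would present $B$ as a union of a scattered (hence countable) part and a perfect part, both of which are incompatible with $B$ being infinite and D-finite. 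Hence $\Cl(B) \setminus B \neq \emptyset$, and Lemma~\ref{lem:firstcountable} then forces $\omega \precsim B$, contradicting the Dedekind-finiteness of $B$. So $\DC(B)$ fails, and therefore $\DC(X)$ does too.

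To construct the surjection $X \twoheadrightarrow B$, I would use the symmetric-name structure of $\mathcal{N}_0$. Fix a $\HS_{\mathcal{F}_0}$-name $\dot{X}$ with support $E \in [\omega]^{<\omega}$. The dichotomy I would exploit is this: \emph{either} every element of $X$ admits an $\HS_{\mathcal{F}_0}$-name whose support is contained in $E$ — in which case $X$ injects into the class of such names (which is well-orderable in the ground model modulo equality of realizations), making $X$ well-orderable in $\mathcal{N}_0$ and contradicting the assumption — \emph{or} some $x \in X$ has a strictly larger essential support, and then the $\Fix(E)$-orbit of any $\HS_{\mathcal{F}_0}$-name of $x$ gives rise in $\mathcal{N}_0$ to an infinite symmetric family inside $X$. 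A symmetric projection from this family onto an infinite family of finite subsets of $\omega \setminus E$ (indexing the relevant Cohen reals) then yields the desired surjection onto an infinite D-finite set of reals, essentially a symmetric copy of a sub-collection of $A$ or of $[A]^{<\omega}$.

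The main obstacle is the symmetric-model bookkeeping in the second horn of the dichotomy: one must produce the projection $X \twoheadrightarrow B$ as a map that itself has an $\HS_{\mathcal{F}_0}$-name, and verify that its image $B$ is both infinite and Dedekind-finite in $\mathcal{N}_0$. This requires careful manipulation of supports under the $\Fix(E)$-action on names for elements of $X$, together with the specific combinatorics of the generators $\dot a_n$ of the first Cohen forcing, to ensure that the image captures genuinely distinct Cohen parameters and is not accidentally well-orderable. The first horn of the dichotomy, by contrast, is essentially a ground-model reflection argument, noting that hereditarily $\Fix(E)$-fixed names of bounded rank form a set in $V$, which is well-ordered there.
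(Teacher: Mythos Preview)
Your overall strategy---surject $X$ onto an infinite Dedekind-finite target on which $\DC$ fails---is exactly the paper's, but the paper resolves your ``main obstacle'' by citation rather than construction. It invokes the \emph{least support map} $s_X \colon X \to [A]^{<\omega}$, which exists in $\mathcal{N}_0$ for every set $X$ and has well-orderable fibers \cite{Jech:1973gf}*{Theorem~5.21, Lemma~5.25}. This single map handles both horns of your dichotomy at once: if $\ran(s_X)$ is finite then $X$ is a finite union of well-orderable sets, hence well-orderable; otherwise $\ran(s_X)\subseteq [A]^{<\omega}$ is an infinite surjective image of $X$. Your hand-built alternative via $\Fix(E)$-orbits of a name $\dot x$ has genuine gaps you have not closed: the orbit $\{(\pi\dot x)_G : \pi \in \Fix(E)\}$ need not lie inside $X$ (from $p\forces \dot x\in\dot X$ you only get $\pi p\forces \pi\dot x\in\dot X$, and $\pi p$ need not be in $G$), and promoting a name-level orbit to a map defined on \emph{all} of $X$ and \emph{living in} $\mathcal{N}_0$ is precisely the nontrivial content of Jech's lemmas. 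Your first horn is also phrased loosely---$X$ does not inject into names, it is contained in the set of their realizations, and the well-orderability of that set in $\mathcal{N}_0$ (not merely in $\Vv$) is again what those lemmas provide. Cite them rather than reconstruct them.

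The paper also argues the failure of $\DC$ on the image differently. Instead of coding into $\R$ and running your Cantor--Bendixson argument, it stays in $[\R]^{<\omega}$ and applies Lemma~\ref{lem:madc}: if $Y=\ran(s_X)$ is infinite and $\DC(Y)$ holds, then $\omega\precsim\bigcup Y\subseteq A$, contradicting the D-finiteness of $A$. Your route through Lemma~\ref{lem:firstcountable} is correct as well, but it needs the extra step of injecting $[A]^{<\omega}$ into $\R$ and checking the image remains D-finite; the paper's version avoids that detour.
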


\begin{lemma}\label{lem:familyoffinitesets}
Let \( X \) be a linearly ordered set, and let \( Y \subseteq [ X ]^{< \omega } \).
If \( \omega \precsim Y \), then \( \omega \precsim \bigcup Y \).
\end{lemma}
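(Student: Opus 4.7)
The plan is to use the linear order on \( X \) to canonically enumerate each finite subset in \( Y \), thereby converting a witness \( f \colon \omega \to Y \) of \( \omega \precsim Y \) into a surjection \( \omega \twoheadrightarrow \bigcup Y \) without invoking any form of choice. Without the linear order, selecting enumerations of the individual \( F_n \coloneqq f(n) \) would require countable choice over a family of finite sets; the linear order sidesteps this entirely.

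Concretely, I would enumerate each \( F_n \) in increasing order as \( F_n(0) < F_n(1) < \dots < F_n ( |F_n| - 1 ) \), which is well-defined because a finite linearly ordered set is canonically well-ordered. Then I would consider the set
\[
D \coloneqq \setof{ (n,i) \in \omega \times \omega }{ i < |F_n| }
\]
under the lexicographic order. Because the \( F_n \) are distinct, at most one of them can be empty, so infinitely many \( |F_n| \) are positive; since each \( |F_n| \) is finite, \( D \) has order type \( \sum_{n < \omega } |F_n| = \omega \), and the canonical order isomorphism provides a bijection \( \omega \to D \).

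Next I would define \( \phi \colon D \to \bigcup Y \) by \( \phi(n,i) = F_n(i) \). Every \( x \in \bigcup Y \) lies in some \( F_n \) and hence equals \( F_n(i) \) for some \( i \), so \( \phi \) is surjective; composing with the bijection \( \omega \approx D \) produces a surjection \( \omega \twoheadrightarrow \bigcup Y \). Finally, I would observe that \( \bigcup Y \) is infinite --- otherwise \( Y \subseteq [ \bigcup Y ]^{< \omega } \) would be finite, contradicting \( \omega \precsim Y \) --- and extract the desired injection \( h \colon \omega \to \bigcup Y \) by the usual choice-free recursion: set \( h(0) \coloneqq \phi(0) \) and \( h(k+1) \coloneqq \phi(m) \), where \( m \) is the least natural number with \( \phi(m) \notin \set{ h(0), \dots , h(k) } \); this minimum exists at every stage because \( \bigcup Y \) is infinite.

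No serious obstacle arises once the canonical increasing enumerations of the \( F_n \) are in place; the rest is routine bookkeeping in \( \ZF \). The only place the linear order genuinely enters is in producing these enumerations, and this is precisely what lets the argument avoid appealing to \( \AComega \) over the countable family of finite sets \( ( F_n )_{ n \in \omega } \).
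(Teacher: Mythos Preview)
Your argument works, but there is a small slip: \( \phi \) surjects onto \( \bigcup_{n} F_n \), not onto \( \bigcup Y \), since \( f \) is only an injection into \( Y \). The fix is immediate --- the same finiteness argument you give for \( \bigcup Y \) applies to \( \bigcup_n F_n \) (the \( F_n \) are infinitely many distinct finite subsets of it), and an injection \( \omega \hookrightarrow \bigcup_n F_n \subseteq \bigcup Y \) is all you need. Once corrected, the recursion defining \( h \) goes through because \( \ran(\phi) = \bigcup_n F_n \) is infinite.

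The paper takes a shorter route. From a sequence \( (Z_n)_{n\in\omega} \) of distinct members of \( Y \), it first passes to a subsequence with \( Z_{n+1} \nsubseteq Z_0 \cup \dots \cup Z_n \) (each finite union has only finitely many subsets, so such indices always exist), and then sets \( z_{n} \) to be the \( \leq \)-least element of \( Z_{n} \setminus (Z_0 \cup \dots \cup Z_{n-1}) \). This builds the injection directly, without the intermediate surjection and the second recursion. Both proofs exploit the linear order at exactly the same point --- to make a canonical selection from a nonempty finite subset of \( X \) --- so the essential idea is shared; your version simply routes through the standard ``surjection from \( \omega \) onto an infinite set yields an injection'' maneuver, which is correct but adds a layer of bookkeeping the paper avoids.
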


\begin{proof}
Let \( \leq \) be a linear ordering of \( X \), and let \( ( Z_n )_{n \in \omega}\) be a sequence of distinct elements of \( Y \). 
By passing to a subsequence, we may assume that \( Z_{ n + 1 } \nsubseteq Z_0 \cup \dots \cup Z_n \), and that \( Z_0 \neq \emptyset \).
Let \( z_0 \) be the \( \le \)-least element of \( Z_0 \), and \( z_{ n + 1} \) be the \( \le \)-least element of \( Z_{n + 1} \setminus ( Z_0 \cup \dots \cup Z_n ) \) for every \( n \).
The \( z_n \)s are distinct, and belong to \( \bigcup Y \), as required.
\end{proof}

\begin{lemma}\label{lem:madc}
If \( \DC ( Y ) \) with \( Y \subseteq [ \R ]^{<\omega} \) infinite, then \( \omega \precsim \bigcup Y \).
\end{lemma}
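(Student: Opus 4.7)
The plan is to apply Lemma~\ref{lem:familyoffinitesets} with $X = \R$ (equipped with its usual linear order), which reduces the task to establishing the apparently weaker conclusion $\omega \precsim Y$. All the work therefore goes into showing that an infinite $Y \subseteq [\R]^{<\omega}$ satisfying $\DC(Y)$ is Dedekind-infinite.

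To do this, I would first use the linear order on $\R$ to linearly order $Y$ itself. For each $Z \in Y$, write $\phi(Z)$ for the tuple enumerating $Z$ in strictly increasing order; this gives an injection $\phi \colon Y \to {}^{<\omega}\R$, and pulling back the lexicographic order on ${}^{<\omega}\R$ through $\phi$ yields a linear order $<_Y$ on $Y$. The argument then splits according to whether $(Y, <_Y)$ is well-ordered.

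If $(Y, <_Y)$ is well-ordered, it is order-isomorphic to some ordinal, which must be at least $\omega$ because $Y$ is infinite, and this immediately produces $\omega \precsim Y$. If not, fix a nonempty $W \subseteq Y$ with no $<_Y$-minimum; then $x \mathrel{R} y \iff y <_Y x$ is a total relation on $W$, and $\DC(W)$ is inherited from $\DC(Y)$ by Proposition~\ref{prop:basicpropertiesofDC&AComega}\ref{prop:basicpropertiesofDC&AComega-a}, since $W$ is a surjective image of $Y$ (e.g.\ by retracting $Y \setminus W$ onto any fixed element of $W$). Applying $\DC(W)$ to $R$ produces a strictly $<_Y$-descending sequence in $W$, which is in particular an injection $\omega \hookrightarrow Y$.

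The main obstacle I expect is exactly the middle step: converting the set-theoretic hypothesis $\DC(Y)$ into Dedekind-infiniteness of $Y$, using only that $Y$ consists of finite subsets of $\R$. The linearization through $\R$'s order is what unlocks it—once $Y$ carries a linear order, the standard ZF dichotomy between being well-ordered and containing an infinite descending chain (the latter supplied by $\DC$) finishes the job. Without the structural assumption $Y \subseteq [\R]^{<\omega}$, no such canonical linear order would be available and the argument would break down.
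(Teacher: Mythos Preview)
Your proof is correct and takes a genuinely different route from the paper's. Both arguments ultimately reduce to establishing $\omega \precsim Y$ and then invoke Lemma~\ref{lem:familyoffinitesets}, but they diverge on how to get there. The paper exploits the metric structure of $\R$: it first disposes of the case where $\bigcup Y$ has no limit point (then $\bigcup Y$ is closed discrete, hence well-orderable outright), and otherwise fixes a limit point $x$ of $\bigcup Y$ and builds the acyclic total relation $R(Z,W) \iff d(x,W) < d(x,Z)$ on $Y$, from which $\DC(Y)$ yields an injective chain. You instead use only the \emph{order} structure: you linearly order $Y$ through the increasing-enumeration embedding into ${}^{<\omega}\R$ and run the standard dichotomy (well-ordered versus a nonempty subset with no minimum, the latter handled by $\DC$). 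Your approach is a bit more general---it goes through verbatim with $\R$ replaced by any linearly ordered set $X$, giving ``$\DC(Y)$ and $Y\subseteq[X]^{<\omega}$ infinite $\Rightarrow$ $\omega\precsim Y$''---while the paper's distance-based relation is more concrete and ties in naturally with the topological viewpoint used elsewhere in the paper. One cosmetic point: ``the lexicographic order on ${}^{<\omega}\R$'' needs a tie-breaking convention for proper initial segments (or you can simply compare by length first), but any such choice works and nothing in your argument depends on it.
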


\begin{proof}
If \( \bigcup Y \) has no limit points, then it is discrete, so \( \omega \precsim \bigcup Y \). 
Suppose otherwise, and let \( x \in \R \) be a limit point of \( \bigcup Y \). 
It is enough to show that \( \omega \precsim Y \) and then apply Lemma~\ref{lem:familyoffinitesets} with \( X = \R \). 
Without loss of generality, we may assume that \( \set{ x} , \emptyset \notin Y \).
For all \( Z \in Y \) let \( d ( x , Z ) = \min \setof{ | r - x | }{ r \in Z \setminus \set { x } } \) be the distance of \( x \) from the rest of \( Z \).
Let \( R \subseteq Y ^2 \) be the binary relation defined as follows: for every \( Z , W \in Y \), 
\[ 
R ( Z , W ) \IFF d ( x , W ) < d ( x , Z ) .
\]
The relation \( R \) is acyclic, and, by our hypothesis on \( x \), it is total. 
It follows from \( \DC ( Y ) \) that \( R \) has an infinite chain, and hence \( \omega \precsim Y \).
\end{proof}

\begin{proof}[Proof of Proposition~\ref{prop:fcstatement}]
In the first Cohen model, for every set \( X \), there is a map \( s_X \colon X \to [ A ]^{<\omega} \), known as the least support map, such that \( s^{-1} ( \set{ B } ) \) is well-orderable for every \( B \in [ A ]^{ < \omega} \)~\cite{Jech:1973gf}*{Theorem 5.21, Lemma 5.25}.

Let \( X \) be such that \( \DC ( X ) \) holds. 
Then also \( \DC ( \ran ( s_X ) ) \) holds. 
If \( \ran ( s_X ) \) were infinite, then letting \( Y= \ran ( s_X ) \) in Lemma~\ref{lem:madc} we would have \( \omega \precsim \bigcup \ran ( s_X ) \subseteq A \), against the fact that \( A \) is D-finite. 
Hence \( \ran ( s_X ) \) is finite, and \( X \), being a finite union of well-orderable sets, is well-orderable.
\end{proof}

\section{The main result}\label{sec:main}
This section is devoted to proving the following:

\begin{theorem}\label{th:Lorenzo}
It is consistent with \( \ZF \) that there is a set \( A \subseteq \R \) such that \( \DC ( A ) \) and \( \neg \AComega ( A ) \).
\end{theorem}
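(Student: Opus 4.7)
The plan is to construct the desired model $\mathcal{M}$ as an iterated symmetric extension over the first Cohen model $\mathcal{N}_0$, using Karagila's framework for iterated symmetric extensions~\cite{Karagila:2019aa}. In $\mathcal{N}_0$ the D-finite set $A = \setof{a_n}{n\in\omega} \subseteq {}^\omega 2$ of Cohen reals already satisfies $\neg\AComega(A)$: a convenient witness is the sequence $B_n \coloneqq \setof{a\in A}{a(n)=0}$. Indeed, any hereditarily $\mathcal{F}_0$-symmetric name for a choice function for $(B_n)_n$ would, by the finite-support structure of $\mathcal{F}_0$, have support some finite $E\subseteq\omega$, forcing its values to lie in $\setof{a_m}{m\in E}$; a density argument on the Cohen conditions then delivers some $n$ at which $a_m(n)=1$ for every $m\in E$, contradicting the requirement that the $n$-th value belongs to $B_n$.

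First I would set up a length-$\omega_1$ iterated symmetric extension in which at each stage $\alpha$ we shoot a chain through a total relation $R_\alpha$ on $A$ selected by a bookkeeping argument that lists all such relations appearing in intermediate models. The stage-$\alpha$ forcing $\forcing{Q}_\alpha$ consists of finite $R_\alpha$-chains ordered by end-extension, with automorphism group extending $\mathcal{G}_0$ via its natural action on the Cohen reals that appear as chain entries; the filter at each stage is generated by stabilizers indexed by finite supports that combine a finite $E\subseteq\omega$ with a finite subset of prior stages. By Karagila's theory the resulting symmetric limit $\mathcal{M}$ is a model of $\ZF$, and the bookkeeping guarantees that every total relation on $A$ in $\mathcal{M}$ receives a chain at some successor stage, so $\DC(A)$ holds in $\mathcal{M}$.

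The main obstacle is preserving $\neg\AComega(A)$ through the iteration, i.e.~showing that the same witness $(B_n)_n$ still has no choice function in $\mathcal{M}$. The approach is to lift the symmetry-plus-density argument from $\mathcal{N}_0$. A hereditarily symmetric name $\dot f$ for a putative choice function comes with a finite iterated support consisting of finitely many stages $\alpha_1,\dots,\alpha_k$ together with a finite $E\subseteq\omega$, and is therefore fixed by automorphisms that act trivially on those stages and pointwise fix $E$. Using that the chain-shooting forcings are by construction equivariant under the action of $\mathcal{G}_0$ on $A$, one produces an automorphism that fixes $\dot f$ but moves the Cohen reals outside $E$ to a configuration in which $\dot f(\check n)\in B_n$ must fail for some $n$; this yields the desired contradiction. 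The delicate part is engineering the successive automorphism groups so that they remain genuinely coherent with $\mathcal{G}_0$ and with the chain-generics added at earlier stages; producing such a coherent tower of symmetric systems, and verifying that the finite-iterated-support property cuts down hereditarily symmetric names enough to run the density argument uniformly, is the technical heart of the construction.
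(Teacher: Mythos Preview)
Your overall architecture---an iterated symmetric extension over the first Cohen model, shooting chains through total relations on $A$---matches the paper's, but there is a genuine gap in the preservation of $\neg\AComega(A)$, and a secondary issue with the bookkeeping.

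The central problem is your symmetry argument for $\neg\AComega(A)$. You claim that a hereditarily symmetric name $\dot f$ for a choice function has a finite iterated support $(E;\alpha_1,\dots,\alpha_k)$ and that an automorphism fixing $E$ and acting ``trivially on those stages'' will fix $\dot f$ while moving enough Cohen reals to force a contradiction. But fixing the stages $\alpha_1,\dots,\alpha_k$ means fixing the generic chains added there, and each such chain has range an \emph{infinite} subset of $A$. A permutation of $\omega$ that fixes only $E$ pointwise will in general move the entries of those chains, and hence will \emph{not} fix $\dot f$. Conversely, if you insist the automorphism fix the chains, you have pinned down infinitely many Cohen reals and lost the freedom needed for your density argument. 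Concretely: nothing you have said rules out that some generic chain $c$ has range meeting every $B_n$, in which case ``the first entry of $c$ lying in $B_n$'' is a perfectly good hereditarily symmetric choice function once the stage of $c$ is in the support. This is not a technicality; it is the whole difficulty.

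The paper resolves this not by a direct symmetry argument on your witness $(B_n)$ but by a topological analysis: it proves that for every generic chain $f_{n,k}$, the closure $\Cl_A(\ran(f_{n,k}))$ is scattered of Cantor--Bendixson height at most $n+2$ (Proposition~\ref{lem:core}). Inductively, this bounds the scattered height of any finite union of chain ranges and finitely many $a_m$'s, and a restriction-lemma argument then shows that every countable subset of $A$ in the final model has closure in $A$ of finite scattered height (Theorem~\ref{th:separablesub}). Since $A$ itself is dense-in-itself, $A$ is not separable, whence $\AComega(A)$ fails. The acyclicity hypothesis on the relations is what makes the scattered-height bound go through; your sketch does not use acyclicity anywhere, which is a symptom of the gap.

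On the iteration: the paper uses length $\omega$, adding at stage $n{+}1$ chains for \emph{all} total acyclic relations in $\mathcal{N}_n$ simultaneously, and then shows (Lemma~\ref{lem:modcl}) that any relation on $A$ in the limit model already lies in some $\mathcal{N}_n$. Your $\omega_1$-length one-relation-at-a-time bookkeeping is not obviously wrong, but you have not explained how the bookkeeping catches every relation appearing in the limit (you would need a chain-condition or reflection argument), and it introduces complications the paper avoids.
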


\subsection{Outline of the proof}
We prove the theorem via an iteration of symmetric extensions of length \( \omega \).
We start the iteration with the first Cohen model \( \mathcal{N}_0\), with \( A \in \mathcal{N}_0 \) being the generic D-finite set of reals (see Section~\ref{subsec:firstCohen}). 
As noted right after Proposition~\ref{prop:FirstCohen}, in \( \mathcal{N}_0 \) the set \( A \) is not separable (in particular \( \AComega ( A ) \) fails) and \( \DC ( A ) \) fails.
Next, we define a chain of models \( \mathcal{N}_0 \subset \mathcal{N}_1 \subset \dots \subset \mathcal{N}_\omega \) such that, for each \( n \), \( \mathcal{N}_{n+1} \) is a symmetric extension of \( \mathcal{N}_n \) that contains a generic set of chains for all binary relations in \( \mathcal{N}_n \) that are total and acyclic on \( A \). 
At the final stage, \( \mathcal{N}_\omega \), which is our model, is going to be something resembling ``the model of sets definable from finitely many elements of \( \bigcup_n \mathcal{N}_n \)". 
If we do the construction properly, we can prove that in \( \mathcal{N}_\omega \) we have added enough countable subsets of \( A \) (or, equivalently, enough sequences over \( A \)) to guarantee \( \DC ( A ) \) (Theorem~\ref{th:dc}), but \( A \) is still not separable, in particular \( \AComega ( A ) \) fails (Corollary~\ref{cor:separable}).
 
Actually, we don't only show that \( A \) is not separable in our model, but we give a topological characterization of its separable subsets: among the subsets of \( A \), the separable ones are precisely those which are scattered with finite scattered height (Definition~\ref{def:scattered}, Theorem~\ref{th:separablesub}).

\subsection{The symmetric system}
We define recursively a sequence \( \seq{ \forcing{P}_n , \mathcal{G}_n , \mathcal{F}_n }_{ n \in \omega } \) of symmetric systems. 
Let \( \seq{ \forcing{P}_0 , \mathcal{G}_0 , \mathcal{F}_0 } \) be the symmetric system defined in Section~\ref{subsec:firstCohen}, i.e.~the one that induces the first Cohen model. 
For each \( n \) we denote by \( \leq _n, \forces _n \) the ordering and the forcing relation of \( \forcing{P}_n \), respectively, and by \( \HS _n \) the class \( \HS _{ \mathcal{F}_n } \), i.e.~the class of all hereditarily \( \mathcal{F}_n \)-symmetric \( \forcing{P} _n \)-names. 
We also let 
\begin{equation}\label{eq:R_n}
\mathcal{R}_n = \setof{ \dot{R} \in \HS _n }{ \FORALL{ \dot{x} \in \dom ( \dot{R} ) } \EXISTS{ m , k } \in \omega \ ( \dot{x} = ( \dot{a}_{ m} , \dot{a}_{k} )^\bullet ) }, 
\end{equation}
where the \( \dot{a}_i \)s are as in~\eqref{eq:nameforCohenreal}.
Then \( \mathcal{R}_n \) is the set of all ``good" hereditarily \( \mathcal{F}_n \)-symmetric \( \forcing{P} _n \)-names for binary relations on \( \dot{A} \). 
 
Recursively on \( n \), we define \( \forcing{P} _{n+1} \) to be the set of all the sequences \( p = \seqof{ p_k }{ k \leq n + 1 } \) such that 
\begin{enumerate}[label={(\arabic*)}]
 \item 
 \( p \restriction n + 1 \in \forcing{P} _{n} \),
 \item 
\( p_{n+1} \colon \dom ( p_{n + 1 } ) \to \mathcal{R}_n \times {}^{<\omega} \omega \) with \( \dom ( p_{n+1} ) \) a finite subset of \( \omega \),
 \item 
 For each \( k \in \dom ( p_{n + 1 } ) \) with \( p_{n+1} ( k ) = ( \dot{R}, \seq{ m_0 , \dots, m_h } ) \) we have 
\[ 
 p \restriction n + 1 \forces _n ``\dot{R} \text{ is total, acyclic and }\dot{a}_{ m_0 } \mathrel{\dot{R}} \dot{a}_{ m_1 } \mathrel{\dot{R}} \dots \mathrel{\dot{R}} \dot{a}_{ m_h }",
\] 
\end{enumerate}
where, at stage \( n = 0 \), we identify the conditions \( p \in \forcing{P} _0 \) with their singleton sequence \( \seq{ p } \).
 
For each \( p \in \forcing{P} _{n+1} \) and \( k \in \dom ( p_{ n + 1 } ) \) with \( p_{ n + 1 } ( k ) = ( \dot{R} , s ) \), we denote \( \dot{R} \) and \( s \) by \( p_{n + 1}^R ( k ) \) and \( p_{n + 1}^s ( k ) \), respectively. 
Given \( p , q \in \forcing{P} _{ n + 1 } \) we let \( p \leq _{ n + 1 } q \) if and only if
\begin{itemize}
 \item 
 \( p \restriction n + 1 \leq _n q \restriction n + 1 \),
 \item 
 \( \dom ( p_{n + 1} ) \supseteq \dom ( q_{n + 1 } ) \),
 \item 
 \( \FORALL{ k \in \dom ( q_{n + 1 } ) } ( p_{n + 1 }^R ( k ) = q_{ n + 1 }^R ( k ) \) and \( p_{ n + 1 }^s ( k ) \supseteq q_{n + 1 }^s ( k )) \).
\end{itemize}
This defines the forcing \( \forcing{P} _{n + 1} \). 
Now we are left to define the subgroup \( \mathcal{G}_{ n + 1 } \) of \( \Aut ( \forcing{P} _{n + 1 } ) \), and the filter \( \mathcal{F}_{ n + 1 } \).

Consider a sequence \( \vec{\pi} = \seq{ \pi_0, \dots, \pi_{ n + 1 } } \) with each \( \pi_i \) being a permutation of \( \omega \). 
By induction hypothesis
\footnote{ At \( n = 0 \) we identify each \( \pi \in \mathcal{G}_0 \) with the singleton sequence \( \seq{ \pi } \).}
\[ 
\vec{\rho} \coloneqq \seq{ \pi_0, \dots, \pi_n } = \vec{\pi} \restriction n + 1 
\]
induces an automorphism \( \vec{ \rho } \in \mathcal{G}_n \). 
Note that, as in Section~\ref{subsec:firstCohen}, we conflate the notation by using the same symbol to denote both sequences of permutations and the automorphisms they induce. 
Now, the sequence \( \vec{\pi} \) induces an automorphism on \( \forcing{P} _{ n + 1 } \) as follows: given \( p \in \forcing{P} _{ n + 1 } \), we let \( \vec{\pi} p \) be the condition in \( \forcing{P} _{ n + 1 } \) such that \( ( \vec{\pi} p ) \restriction n + 1 = \vec{\rho } ( p \restriction n + 1) \)
and, for each \( k \in \dom ( p_{ n + 1 } ) \) with \( p_{ n + 1 }^s ( k ) = \seq{ m_0 , \dots , m_h } \) and \( p_{ n + 1 }^R ( k ) = \dot{R} \),
\begin{align*}
 ( \vec{\pi} p )_{ n + 1 }^R ( \pi_{ n + 1 } ( k ) ) & = \vec{\rho} ( \dot{R} ) ,
 \\
 ( \vec{\pi} p )_{ n + 1 }^s ( \pi_{ n + 1 } ( k ) ) & = \seq{ \pi_0 ( m_0 ) , \dots, \pi_0 ( m_h ) } .
\end{align*}

Let \( \mathcal{G}_{n + 1} \) be the group of all such automorphisms on \( \forcing{P} _{n + 1} \), i.e.~the ones induced by sequences (of length \( n + 2 \)) of permutations of \( \omega \). 
For each sequence \( \vec{H} = \seq{ H_0 , \dots , H_{ n + 1 } } \) of subsets of \( \omega \), we let \( \Fix ( \vec{H} ) \) be the subgroup of all those \( \vec{\pi} \in \mathcal{G}_{n + 1} \) such that \( \pi_k \) pointwise fixes \( H_k \) for all \( k \leq n + 1 \). 
We define \( \mathcal{F}_{n + 1} \) as the filter on \( \mathcal{G}_{n + 1} \) generated by \( \setof{ \Fix ( \vec{H} ) }{ H_k \text{ is finite for all } k \leq n +1 } \).
From now on, we use the symbol \( \vec{H} \) to denote  \emph{finite sequences of finite} subsets of \( \omega \).

This ends the inductive definition of the sequence \( \seq{ \forcing{P} _n , \mathcal{G}_n , \mathcal{F}_n }_{ n \in \omega } \). 
Note that, for each \( n < m \), there is a natural complete embedding \( i_{ n , m } \colon \forcing{P} _n \to \forcing{P} _m \) and a natural embedding \( j_{ n , m } \colon \mathcal{G}_n \to \mathcal{G}_m \). 
Thus we let \( \forcing{P} \) and \( \mathcal{G} \) be the direct limits of the forcings \( \forcing{P} _n \) and of the groups \( \mathcal{G}_n \), respectively. 

We now define the normal filter \( \mathcal{F} \) on \( \mathcal{G} \) in the expected way: we let \( \mathcal{F} \) be the filter generated by 
\[ 
\setof{ \Fix ( \vec{H} ) }{ H_k \text{ is finite for all } k < \lh ( \vec{H} ) } ,
\] 
where, given any \( \vec{H} \), \( \Fix ( \vec{H} ) \) is the subgroup of \( \mathcal{G} \) made of all those \( \vec{\pi} \) such that \( \pi_k \) pointwise fixes \( H_k \) for all \( k < \lh ( \vec{H} ) \).

A condition \( p \) of the direct limit \( \forcing{P} \) is a finite sequence \( \seq{ p_0, \dots, p_n } \), and it is identified with \( \seq{ p_0, \dots, p_n , \emptyset , \dots , \emptyset } \) and with \( \seq{ p_0, \dots, p_n, \emptyset, \emptyset \dots } \), that is a sequence obtained by concatenating \( p \) with a finite sequence of empty sets or with the infinite sequence of empty sets. 
We treat analogously the \( \vec{H} \)s.

Henceforth \( \seq{ \forcing{P} , \mathcal{G}, \mathcal{F}} \) is our symmetric system, with \( \HS \) being the class of all \( \mathcal{F} \)-symmetric \( \forcing{P} \)-names and \( \leq , \forces \) being the ordering and the forcing relation of \( \forcing{P} \), respectively.
\begin{remark}
Our iterative construction fits into the general framework developed by Asaf Karagila~\cite{Karagila:2019aa} to deal with iterations of symmetric extensions.
\end{remark}

Given an \( \dot{x} \in \HS \), we say that \( \vec{H} \) is a \markdef{support} of \( \dot{x} \) if \( \vec{\pi} \dot{x} = \dot{x} \) for all \( \vec{\pi} \in \Fix ( \vec{H} ) \). 
Also, given \( p = \seq{ p_0, \dots, p_{n} } \in \forcing{P} \) and \( \vec{H} = \seq{ H_0, \dots , H_n } \), we write \( p \restriction \vec{H} \) to denote the sequence \( \seq{ p_0 \restriction H_0 , \dots, p_{n} \restriction H_{n} } \). 
Note that the latter sequence needs not to belong to \( \forcing{P} \). 

\begin{lemma}[Restriction Lemma]
Let \( \upvarphi (x_1, \dots, x_n) \) be a formula in the forcing language, and let \( \dot{x}_1,\dots, \dot{x}_n \in \HS \). 
For any \( p \in \forcing{P} \) and for any \( \vec{H} \), if \( \vec{H} \) is a support for each of the \( \dot{x}_i \)'s and, for all \( m > 0 \), for all \( k \in H_m \cap \dom (p_m) \), \( \vec{H} \restriction m \) is a support for \( p^R_m ( k ) \) and \( \ran ( p^s_m ( k ) ) \subseteq H_0 \), then \( p \restriction \vec{H} \in \forcing{P} \) and 
\[
p \forces \upvarphi ( \dot{x}_1, \dots, \dot{x}_n) \IFF p \restriction \vec{H} \forces \upvarphi ( \dot{x}_1, \dots, \dot{x}_n).
\]
\end{lemma}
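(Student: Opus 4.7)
\smallskip

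The plan is to prove this by reducing to the standard Restriction/Symmetry Lemma machinery, treating the two halves of the biconditional separately. First, I would verify that $p \restriction \vec{H} \in \forcing{P}$ by induction on $n = \lh(p) - 1$. The base case $m = 0$ is immediate since restricting a finite partial function $p_0$ to $H_0$ again gives a condition in $\forcing{P}_0$. For the inductive step, I need to check the three clauses in the definition of $\forcing{P}_{m+1}$: clauses (1) and (2) follow directly from the inductive hypothesis and the fact that $\dom(p_{m+1}) \cap H_{m+1}$ is finite. For clause (3), I need that for each $k \in \dom(p_{m+1}) \cap H_{m+1}$, $(p \restriction \vec{H}) \restriction (m+1) \forces_m$ the statement that $p_{m+1}^R(k)$ is total, acyclic, and witnesses the chain given by $p_{m+1}^s(k)$. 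But $\vec{H} \restriction (m+1)$ is a support of $p_{m+1}^R(k)$ by hypothesis and $\ran(p_{m+1}^s(k)) \subseteq H_0$, so this forced statement only mentions objects whose supports are already contained in $\vec{H} \restriction (m+1)$; applying the inductive hypothesis of the Restriction Lemma itself to the shorter formula at level $m$ yields the conclusion.

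Next, the direction $(\Leftarrow)$ is immediate: $p \leq p \restriction \vec{H}$, so anything forced by the weaker condition is forced by the stronger one. For the direction $(\Rightarrow)$, I argue by contradiction. Assume $p \forces \upvarphi$ but $p \restriction \vec{H} \not\forces \upvarphi$; then pick $q \leq p \restriction \vec{H}$ with $q \forces \neg \upvarphi$. The strategy is to produce some $\vec{\pi} \in \Fix(\vec{H})$ such that $\vec{\pi} q$ is compatible with $p$ in $\forcing{P}$. Once we have this, the Symmetry Lemma gives $\vec{\pi} q \forces \neg \upvarphi(\vec{\pi} \dot{x}_1, \ldots, \vec{\pi} \dot{x}_n) = \neg \upvarphi(\dot{x}_1, \ldots, \dot{x}_n)$ (the equality because $\vec{\pi}$ fixes each $\dot{x}_i$, as $\vec{H}$ is a support of each), and a common extension of $p$ and $\vec{\pi} q$ would force both $\upvarphi$ and $\neg \upvarphi$, a contradiction.

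The main obstacle, and the technical heart of the proof, is constructing such a $\vec{\pi}$. I construct $\vec{\pi} = \seq{\pi_0, \ldots, \pi_N}$ by recursion on the level. At level $0$, since $\dom(q_0)$ and $\dom(p_0)$ are finite and $q_0 \restriction H_0 \supseteq p_0 \restriction H_0$ (because $q \leq p \restriction \vec{H}$), I pick a permutation $\pi_0$ of $\omega$ fixing $H_0$ pointwise and with $\pi_0[\dom(q_0) \setminus H_0] \cap \dom(p_0) = \emptyset$; this makes $(\vec{\pi} q)_0$ and $p_0$ compatible. At level $m+1$, I have already selected $\vec{\pi} \restriction (m+1)$. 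I then choose $\pi_{m+1}$ fixing $H_{m+1}$ pointwise and satisfying $\pi_{m+1}[\dom(q_{m+1}) \setminus H_{m+1}] \cap \dom(p_{m+1}) = \emptyset$, so any clash between $(\vec{\pi} q)_{m+1}$ and $p_{m+1}$ can only happen at coordinates $k \in H_{m+1} \cap \dom(q_{m+1}) \cap \dom(p_{m+1})$. For such $k$, $\pi_{m+1}(k) = k$; since $\vec{H} \restriction (m+1)$ supports $p_{m+1}^R(k)$, the action of $\vec{\pi} \restriction (m+1)$ fixes $p_{m+1}^R(k) = q_{m+1}^R(k)$, and since $\ran(p_{m+1}^s(k)) \subseteq H_0$ and $\pi_0$ fixes $H_0$, the $s$-component of $(\vec{\pi} q)_{m+1}(k)$ still extends $p_{m+1}^s(k)$. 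Hence $(\vec{\pi} q)_{m+1}$ is compatible with $p_{m+1}$, completing the recursion and yielding the desired $\vec{\pi}$.
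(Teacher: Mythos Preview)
Your proposal is correct and follows essentially the same approach as the paper's proof: an induction on the length of $\vec{H}$ (which you implicitly acknowledge when you invoke ``the inductive hypothesis of the Restriction Lemma itself'' to establish clause (3) for $p\restriction\vec{H}\in\forcing{P}$), followed by the construction of a $\vec{\pi}\in\Fix(\vec{H})$ that moves $q$ off of $p$ outside $\vec{H}$ and an appeal to the Symmetry Lemma to derive a contradiction. The only difference is organizational---the paper interleaves the verification that $p\restriction\vec{H}\in\forcing{P}$ with the forcing biconditional inside a single induction, whereas you describe them as two phases---but the substance, including the compatibility check at overlap coordinates using the support hypotheses on $p_m^R(k)$ and the inclusion $\ran(p_m^s(k))\subseteq H_0$, is identical.
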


\begin{proof}
We prove the lemma by induction on the length of \( \vec{H} \). 

Let's first assume \( \vec{H} = \seq{ H_0 } \) for some finite \( H_0\subset \omega \), then \( p \restriction \vec{H} \in \forcing{P} _0 \). 
Assume for a contradiction that \( p \restriction \vec{H} \not \forces \upvarphi ( \dot{x}_1, \dots, \dot{x}_n ) \), then there is a \( q \leq p \restriction \vec{H} \) such that \( q \forces \neg \upvarphi ( \dot{x}_1, \dots, \dot{x}_n ) \). 
Let \( \vec{\pi} \in \mathcal{G} \) such that \( \pi_0 \) pointwise fixes \( H_0 \) and such that \( \pi_0[ \dom (q_0)] \cap \dom (p_0) = H_0 \cap \dom (p_0) \) and \( \pi_m[ \dom (q_m)] \cap \dom (p_m) = \emptyset \) for all \( m > 0 \). 
In particular, \( \vec{\pi} \in \Fix ( \vec{H} ) \). 
By hypothesis, \( \vec{H} \) is a support for all the \( \dot{x}_i \)'s. 
Thus, by the Symmetry Lemma, \( \vec{\pi} q \forces \neg \upvarphi ( \dot{x}_1, \dots, \dot{x}_n ) \). 
However, \( p \) and \( \vec{\pi} q \) are compatible, contradiction.

Now let's assume that \( \vec{H} = \seq{ H_0, \dots, H_m } \).

\begin{claim}
\( p \restriction \vec{H} \in \mathbf{P}_m \).
\end{claim}
\begin{proof}
By induction hypothesis, \( p \restriction ( \vec{H} \restriction m ) \in \mathbf{P}_{ m - 1 } \). 
Fix a \( k \in H_m \cap \dom (p_m) \). 
Let \( \dot{R} = p_m^R ( k ) \) and \( \seq{ n_0 , \dots , n_h } = p_m^s ( k ) \). 
Then, by assumption, \( \vec{H}\restriction m \) is a support of \( \dot{R} \) and \( n_i \in H_0 \) for every \( i \le h \), or, equivalently, \( H_0 \) is a support for \( \dot{a}_{n_i} \). 
By definition of \( \mathbf{P}_m \), 
\[
p \restriction m \forces ``\dot{R} \text{ is total, acyclic and }\dot{a}_{n_0} \mathrel{\dot{R}} \dot{a}_{n_1} \mathrel{\dot{R}} \dots \mathrel{\dot{R}} \dot{a}_{n_h}".
\]
By induction hypothesis,
\[
( p \restriction \vec{H} ) \restriction m = p \restriction ( \vec{H} \restriction m ) \forces ``\dot{R} \text{ is total, acyclic and }\dot{a}_{n_0} \mathrel{\dot{R}} \dot{a}_{n_1} \mathrel{\dot{R}} \dots \mathrel{\dot{R}} \dot{a}_{n_h}".
\]
Therefore, \( p \restriction \vec{H} \in \mathbf{P}_m \).
\end{proof}

Assume for a contradiction that \( p \restriction \vec{H} \not \forces \upvarphi ( \dot{x}_1, \dots, \dot{x}_n ) \), then there is a \( q \leq p \restriction \vec{H} \) such that \( q \forces \neg \upvarphi ( \dot{x}_1, \dots, \dot{x}_n ) \). 
Let \( \vec{\pi} \in \mathcal{G} \) such that \( \pi_l \) pointwise fixes \( H_l \) for each \( l \leq m \) and such that \( \pi_l [ \dom ( q_l ) ] \cap \dom ( p_l ) = H_l \cap \dom ( p_l ) \) for all \( l \leq m \) and \( \pi_l [ \dom ( q_l ) ] \cap \dom ( p_l ) = \emptyset \) for all \( l > m \). 
In particular, \( \vec{\pi} \in \Fix ( \vec{H} ) \). 
Thus, by the Symmetry Lemma, \( \vec{\pi} q \forces \neg \upvarphi ( \dot{x}_1, \dots, \dot{x}_n ) \). 

\begin{claim}
\( p \) and \( \vec{\pi} q \) are compatible.
\end{claim}
\begin{proof}
It suffices to show that \( p_l^R ( k ) = ( \vec{\pi} q )_l^R ( k ) \) and that the sequence \( p_l^s ( k ) \) is extended by \( ( \vec{\pi} q)_l^s ( k ) \), for every \( l \le m \) and for every \( k \in \dom ( p_l ) \cap \dom ( ( \vec{ \pi } q )_l ) \). 
Note that \( \dom ( ( \vec{\pi} q )_l ) = \pi_l [ \dom ( q_l ) ] \). 
Fix an \( l \le m \) and a \( k \in \dom ( p_l ) \cap \pi_l [ \dom ( q_l )] \). 
By the way we chose \( \vec{\pi} \), we must have \( k \in H_l \). 
Also, as we assumed \( q \le p \), we have \( q_l^R ( k ) = p_l^R ( k ) \) and \( q_l^s ( k ) \supseteq p_l^s ( k ) \). 
Moreover, we assumed \( \vec{H} \restriction l \) to be a support for \( p_l^R ( k ) \), and we have picked \( \vec{\pi} \) so that \( \vec{\pi} \in \Fix ( \vec{H} ) \). 
In particular, \( (\vec{\pi} \restriction l) \, ( p_l^R ( k ) ) = p_l^R ( k ) \) and \( \pi_l ( k ) = k \).
Therefore, by the definition of the induced automorphism \( \vec{\pi} \in \mathcal{G} \), 
\[
 ( \vec{ \pi } q )_l^R ( k ) = (\vec{\pi} \restriction l) \, ( q_l^R ( \pi_l^{-1} ( k ) ) ) = (\vec{\pi} \restriction l) \,( q_l^R ( k ) ) = (\vec{\pi} \restriction l) \, ( p_l^R ( k ) ) = p_l^R ( k ) .
\]
Moreover, since we assumed \( \ran ( p_l^s ( k ) ) \subseteq H_0 \), and \( \pi_0 \in \Fix ( H_0 ) \), we have, for every \( i< \lh ( p_l^s ( k ) ) \),
\[
 ( \vec{\pi} q )_l^s ( k ) ( i ) = \pi_0 ( q_l^s ( \pi_l^{-1} ( k ) ) ( i ) ) = \pi_0 ( q_l^s ( k ) ( i ) ) = \pi_0 ( p_l^s ( k ) ( i ) ) = p_l^s ( k ) ( i ) ,
\]
and therefore \( ( \vec{\pi} q )_l^s ( k ) \supseteq p_l^s ( k ) \).
\end{proof}
As before, the fact that \( p \) and \( \vec{\pi} q \) are compatible yields the desired contradiction and concludes the proof.
\end{proof}

\subsection{The model}
For each \( n , k \in \omega \), we let
\begin{align*}
 \dot{f}_{ n , k } &= \setof{ ( ( \check{l} , \dot{a}_m )^\bullet , p ) }{ l , m \in \omega \AND p \in \forcing{P}_{ n + 1 } \AND p^s_{ n + 1 } ( k ) ( l ) = m } , 
 \\ 
\dot{F}_n & = \setof{ \dot{f}_{ n , k } }{ k \in \omega }^\bullet.
\end{align*}
Note that these \( \forcing{P} \)-names are in \( \HS \). 
These, together with \( \dot{A}\) and the \( \dot{a}_n \)s, are the (hereditarily symmetric) names for all the generic sets we are interested in.
Observe that \( \dot{f}_{ n , k } \) is a \( \forcing{P}_{ n + 1 } \)-name for an \( R \)-chain belonging to \( \mathcal{N}_{ n + 1} \), where \( R \) is the relation with \( \forcing{P}_{n+1} \)-name \( \setof{ ( p^R_{ n + 1 } ( k ) , p ) }{ p \in \forcing{P}_{ n + 1 } } \).

Fix a \( \Vv \)-generic filter \( G \) for \( \forcing{P} \) and, for all \( n \), let \( \mathcal{N}_n \) be the symmetric extension obtained from \( \seq{ \forcing{P} _n , \mathcal{G}_n , \mathcal{F}_n } \), and \( \mathcal{N} \) be the symmetric extension, obtained from \( \seq{ \forcing{P} , \mathcal{G}, \mathcal{F} } \). 
Clearly we have 
\[
 \Vv \subseteq \mathcal{N}_0 \subseteq \mathcal{N}_1 \subseteq \dots \subseteq \mathcal{N} = \mathcal{N}_\omega \subseteq \Vv [G] .
\] 
For each \( \forcing{P} \)-name (e.g.~\( \dot{A} \)), we let its symbol without the dot (i.e.~\( A \)) be its evaluation according to \( G \) (i.e.~\( \dot{A}_G \)).

\begin{lemma}\label{lem:newchains}
 For every \( n \in \omega \), for every total and acyclic binary relation \( R \in \mathcal{N}_n \) on \( A \), there is an an \( R \)-chain in \( \mathcal{N}_{n+1} \).
\end{lemma}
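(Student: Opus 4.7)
The plan is to manufacture, for any total acyclic $R$ on $A$ in $\mathcal{N}_n$, a name $\dot{R}^* \in \mathcal{R}_n$ with $(\dot{R}^*)_G = R$, and then to read off an $R$-chain in $\mathcal{N}_{n+1}$ from the generic function $(\dot{f}_{n,k})_G$ for a suitable $k$. Starting from any $\dot{R} \in \HS_n$ with $\dot{R}_G = R$, I would set
\[
\dot{R}^* = \setof{((\dot{a}_m, \dot{a}_\ell)^\bullet, p)}{m, \ell \in \omega,\ p \in \forcing{P}_n,\ p \forces_n (\dot{a}_m, \dot{a}_\ell)^\bullet \in \dot{R}}.
\]
Since $A = \setof{a_m}{m \in \omega}$, we have $(\dot{R}^*)_G = R$. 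The Symmetry Lemma together with $\vec{\pi} \dot{a}_m = \dot{a}_{\pi_0(m)}$ yields $\vec{\pi} \dot{R}^* = \dot{R}^*$ for every $\vec{\pi} \in \mathcal{G}_n$ fixing $\dot{R}$, so $\dot{R}^* \in \HS_n$ and hence $\dot{R}^* \in \mathcal{R}_n$.

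Next, fix $p_0 \in G \cap \forcing{P}_n$ with $p_0 \forces_n$ ``$\dot{R}^*$ is total and acyclic on $\dot{A}$'' (available since these properties hold for $R$ in $\mathcal{N}_n$), and argue by density that
\[
D = \setof{q \leq p_0}{\exists k \in \dom(q_{n+1})\ (q^R_{n+1}(k) = \dot{R}^*)}
\]
meets $G$. Given $p \leq p_0$, pick $k \notin \dom(p_{n+1})$ and any $m_0 \in \omega$, and extend $p$ by declaring $q_{n+1}(k) = (\dot{R}^*, \seq{m_0})$: since $q \restriction n+1 = p \restriction n+1 \leq p_0$ still forces $\dot{R}^*$ total and acyclic, the new condition $q$ satisfies the requirements to lie in $\forcing{P}_{n+1}$. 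Fix $q \in D \cap G$ with witness $k$.

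Unwinding the definition, $(\dot{f}_{n,k})_G = \seqof{a_{s^*(l)}}{l \in \omega}$, where $s^* = \bigcup \setof{p^s_{n+1}(k)}{p \in G}$; a parallel density argument, using totality of $\dot{R}^*$ to append one element at a time, shows $s^*$ has length $\omega$, and the chain clause in the definition of $\leq_{n+1}$ guarantees $a_{s^*(l)} \mathrel{R} a_{s^*(l+1)}$ for every $l$. Since $\seqLR{\emptyset, \dotsc, \emptyset, \set{k}}$ is a support for $\dot{f}_{n,k}$, the resulting $R$-chain lies in $\mathcal{N}_{n+1}$.

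I expect the main technical hurdle to be the symmetry check for $\dot{R}^*$: the interplay among the Symmetry Lemma, the substitution $(\dot{a}_m,\dot{a}_\ell)^\bullet \mapsto (\dot{a}_{\pi_0(m)}, \dot{a}_{\pi_0(\ell)})^\bullet$, and the action $p \mapsto \vec{\pi} p$ on conditions must line up to reproduce $\dot{R}^*$ verbatim. The density arguments are then routine because modifications at the $(n+1)$-st coordinate do not disturb the $\forces_n$-requirements imposed on the earlier part of the condition.
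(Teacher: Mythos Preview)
Your proposal is correct and follows essentially the same route as the paper: both replace $\dot{R}$ by a canonical name in $\mathcal{R}_n$ (the paper calls it $\dot{S}$, defined exactly as your $\dot{R}^*$), then use a density argument at coordinate $n+1$ to arrange that some $\dot{f}_{n,k}$ is a chain for that relation. The only cosmetic differences are that the paper seeds the new slot with the empty sequence rather than $\seq{m_0}$ and concludes via the set $F_n\in\mathcal{N}_{n+1}$ instead of fixing a particular $k$ from $G$; conversely, you spell out the density argument that $\dom(f_{n,k})=\omega$, which the paper leaves implicit in the phrase ``$\dot{f}_{n,m}$ is an $\dot{S}$-chain.''
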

\begin{proof}

Let \( p \in G \) and \( \dot{R} \in \HS _n \) such that 
\[
p \forces \dot{R}\subseteq \dot{A} \times \dot{A} \text{ total and acyclic,}
\]
and without loss of generality we may assume that \( p \in \forcing{P} _n \). 
We show that \( \dot{R}_G = \dot{S}_G \) for some \( \dot{S} \in \mathcal{R}_n \) as in~\eqref{eq:R_n}.
Let
\[ 
\dot{S} = \setof{ ( ( \dot{a}_m , \dot{a}_k )^\bullet , q ) }{ m , k \in \omega, q \in \forcing{P} _n \text{, and } q \forces \dot{a}_m \mathrel{\dot{R}} \dot{a}_k } .
\] 
It readily follows that \( \dot{S} \) is in \( \mathcal{R}_n \) and \( p \forces \dot{R} = \dot{S} \). 
Fix any \( q \in \forcing{P}_{n + 1} \) with \( q \le p \).
Pick an \( m \in \omega \setminus \dom ( q_{ n + 1 } ) \) and consider the finite sequence \( q' \) such that \( q'_l = q_l \) for every \( l \neq n + 1 \) and \( q'_{ n + 1 } = q_{ n + 1 } \cup \set{ ( m , ( \dot{S} , \emptyset ) ) } \). 
Then \( q' \in \forcing{P}_{ n + 1 } \), \( q' \le q \) and 
\[
q' \forces \dot{f}_{ n , m } \text{ is an } \dot{S} \text{-chain, and } \dot{S} = \dot{R} .
\] 
By density, 
\[
p \forces \exists f \in \dot{F}_n \text{ which is an } \dot{R} \text{-chain}.
\]
Since \( F_n \in \mathcal{N}_{ n + 1 } \) we are done.
\end{proof}

Since \(A\) is not closed as a set of reals, there exists a total and acyclic binary relation over \(A\) in \(\mathcal{N}_0\) (see Lemma~\ref{lem:firstcountable}). Therefore, by Lemma~\ref{lem:newchains}, the set \(A\) becomes Dedekind-infinite already in \(\mathcal{N}_1\). However, the next proposition tells us that the range of any generic chain introduced by the iteration is far from being dense in \(A\). This result is crucial in showing that \(A\) is not separable in \(\mathcal{N}\).

In order to get to the key proposition, we need to recall the well-known notion of scattered space (e.g.~see~\cite{MR296671}*{\S 8.5}).

\begin{definition}\label{def:scattered}
Given a topological space \( X \), we let by ordinal induction
\begin{align*}
 X^{ ( 0 ) } &= X,
 \\ 
 X^{ ( \alpha + 1 ) } &= \setof{ x \in X^{ ( \alpha ) } }{ x \text{ is a limit point of } X^{ ( \alpha ) } }, 
 \\
 X^{ ( \lambda ) } &= \bigcap_{\alpha < \lambda} X^{ ( \alpha ) } \qquad \text{for \( \lambda \) a limit ordinal}.
\end{align*}
For every space \( X \) there is necessarily an ordinal \( \alpha \) such that \( X^{ ( \alpha ) } = X^{ ( \alpha + 1 ) } \), and we call the least such ordinal the \markdef{scattered height} of the space. 
A topological space \( X \) is \markdef{scattered} if there is an \( \alpha \) such that \( X^{ ( \alpha ) } = \emptyset \).
\end{definition}

It is easy to check in \( \ZF \) that every second countable scattered space is countable~\cite{MR296671}*{Proposition 8.5.5}.
If \( \emptyset \neq X \subseteq \R \) is dense in itself, then \( X^{( 1 )} = X \) and \( X \) is not scattered.
In particular, \( A \) is not scattered.

For each \( t \in {}^{<\omega} 2 \), we denote by \( \dot{\Nbhd}_t \) the canonical name for the basic open set \( \Nbhd_{t} \), i.e.~the set of all infinite binary sequences extending \( t \).

\begin{proposition}\label{lem:core}
For each \( n , k \in \omega \), \( \mathcal{N} \Models \left ( \Cl_A ( \ran ( f_{ n , k } ) ) \right )^{ ( n + 2 ) } = \emptyset \).
\end{proposition}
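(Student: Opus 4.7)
I would proceed by induction on \( n \), with the guiding intuition that each level of the iterated symmetric extension contributes at most one additional layer to the scattered structure of \( \Cl_A(\ran(f_{n,k})) \).

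For the base case \( n = 0 \), the goal is to show \( \Cl_A(\ran(f_{0,k}))^{(2)} = \emptyset \). I would argue by contradiction: assume some \( p \in G \) forces \( \dot a_j \) into this second scattered derivative, where \( \dot R = p_1^R(k) \in \mathcal{R}_0 \) is the underlying name for the total acyclic relation. By enlarging if necessary, fix a support \( \vec H = \seq{H_0, H_1} \) such that \( j \in H_0 \), \( k \in H_1 \), \( \ran(p_1^s(k)) \subseteq H_0 \), and \( H_0 \) is a support for \( \dot R \); apply the Restriction Lemma to replace \( p \) by \( p \restriction \vec H \). Then employ the Symmetry Lemma: for automorphisms \( \vec\pi \in \Fix(\vec H) \) whose \( \pi_0 \) permutes indices in \( \omega \setminus H_0 \), the image \( \vec\pi p \) remains compatible with \( p \) and witnesses many translates \( a_{j'} \) forced into the first derivative clustering near \( a_j \). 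The combinatorial contradiction would come from the fact that the set of such \( j' \)s is arranged so as to make the first derivative itself dense-in-itself near \( a_j \), violating the defining property of the second derivative being the set of non-isolated points.

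For the inductive step, assume the result for all \( n' < n \). The chain \( f_{n,k} \) is for a relation \( R = (p_{n+1}^R(k))_G \in \mathcal{N}_n \), whose name lies in \( \mathcal{R}_n \) and so is hereditarily \( \mathcal{F}_n \)-symmetric. Fix a support \( \vec H \) for \( \dot R \), \( \dot f_{n,k} \) and any \( \dot a_j \) putatively forced into the \( (n+2) \)nd derivative. A symmetry-and-restriction argument at level \( n+1 \), parallel to the base case, reduces the problem to controlling the scattered structure of closures of ranges of chains introduced at earlier stages---namely, of those \( f_{n', k'} \) whose realizations enter the definability of \( R \). By the inductive hypothesis, each such closure has scattered height at most \( n'+2 \le n+1 \), and the new chain adds at most one further layer, giving a total bound of \( n+2 \).

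The \emph{main obstacle} I anticipate is the base case, specifically converting the symmetry-genericity intuition into a rigorous combinatorial contradiction: one needs to verify that the automorphisms in \( \Fix(\vec H) \) really do produce enough forced translates \( a_{j'} \) to contradict the assumed scattered structure, which in turn requires a careful analysis of how conditions in \( \forcing{P}_1 \) constrain the ``tail'' of the generic sequence indexing the chain. Once the base case is settled, the inductive step should follow the same template, modulo careful bookkeeping of supports across the levels of the iteration and the interplay between the symmetric structure of \( \mathcal{N}_n \) and the topology inherited from \( \R \).
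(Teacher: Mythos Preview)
Your inductive framework and your instinct to use the Symmetry and Restriction Lemmas are on target, and the high-level picture for the inductive step (the new chain ``adds one layer'' on top of the earlier $f_{i,j}$'s) matches the paper. But the base case, which you correctly flag as the main obstacle, has a genuine gap, and it is not merely a matter of bookkeeping.

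First, your stated contradiction is not a contradiction. You assume $a_j$ lies in the second derivative and then propose to show that the first derivative is dense-in-itself near $a_j$; but that is exactly what it \emph{means} for $a_j$ to be in the second derivative, so nothing absurd follows. Moreover, since you place $j \in H_0$, every $\vec\pi \in \Fix(\vec H)$ fixes $\dot a_j$, so applying such automorphisms does not produce ``translates $a_{j'}$'' at all---the symmetry argument as you describe it is inert.

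Second, and more importantly, your proposal never invokes the \emph{acyclicity} of $R$, and this is the engine of the whole proof. The paper does not argue by directly contradicting membership in the second derivative. Instead it proves a sharper statement $(\dagger)_n$: a suitable condition forces $\bigl(\Cl_{A}(\ran f_{n,k})\bigr)^{(1)}$ to lie inside the finite set $A_{H_0}$ (base case) or inside $A_{H_0}$ together with finitely many earlier ranges (inductive step). For $n=0$ this is done in two moves. \textit{(i)} One shows $\ran(f_{0,k})\setminus A_{H_0}$ is discrete: if $a_m$ with $m\notin H_0$ were a limit point of the range, pick a nearby $a_{m'}$ in the range (so $a_m \mathrel{R^+} a_{m'}$), then swap $m$ and $m'$ by an automorphism in $\Fix(H_0)$; the swapped condition forces $a_{m'}\mathrel{R^+} a_m$, yielding a cycle---contradicting acyclicity. \textit{(ii)} One then shows no $a_m$ with $m\notin H_0$ can be a limit point of the closure: otherwise a symmetry argument (moving $m$ freely outside $H_0$) makes the range dense in a basic open set, contradicting the discreteness just established. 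The inductive step repeats this template with the earlier ranges playing the role of $A_{H_0}$.

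So the missing idea is precisely the swap-to-a-cycle argument exploiting acyclicity, together with the reformulation of the goal as ``the first derivative is trapped inside an explicit finite (or inductively controlled) set'' rather than ``the second derivative is empty'' directly.
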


\begin{proof}
In other words, we want to show that in \( \mathcal{N} \) (or, equivalently
\footnote{Note that the formula \( \upvarphi ( x , y , \alpha ) \coloneqq ``\alpha\) is an ordinal, \( x \subseteq y \) are sets of reals and \( \Cl_y ( x ) \) is scattered of height \( \le \alpha" \) is a \( \Delta_1^{\ZF} \)-formula. 
In particular, it is absolute between models of \( \ZF \).}%
, in \( \Vv [ G ] \)), for every \( n , k \), the closure with respect to \(A\) of the range of \( f_{ n , k } \) is scattered of height at most \( n + 2 \). 
For any \( H \subseteq \omega \), let us introduce the \( \forcing{P} \)-name 
\begin{equation}\label{eq:dotA_H}
 \dot{A}_H \coloneqq \setof{\dot{a}_m }{ m \in H }^\bullet .
\end{equation}

Let \( ( \dagger ) \) be the statement \( \FORALL{  n \in \omega }  ( \dagger )_n \), where \( ( \dagger )_n \) is the following statement:
\begin{equation*}\label{eq:mainprop}\tag*{\( ( \dagger )_n \)}
\begin{minipage}[c]{0.85\textwidth}
Let \( k \in \omega \), \( p = \seq{ p_0, \dots p_n } \in \forcing{P} _n \), \( \dot{R} \in \mathcal{R}_n \) with support \( \vec{H} = \seq{ H_0, \dots, H_n } \) such that \( p \forces \dot{R} \) is total and acyclic". 
Assume also that, for each \( i \leq n \), \( \dom ( p_i ) = H_i \), and, for all \( 0 < i \leq n \), for all \( j \in H_i \), \( \vec{H} \restriction i \) is a support for \( p_{i}^R ( j ) \). 
Then 
\[
p \conc \seq{\set{ ( k , ( \dot{R}, \emptyset ) ) } } \forces \bigl ( \Cl_{\dot{A}} ( \ran ( \dot{f}_{ n , k } ) ) \bigr )^{ ( 1 ) } \subseteq \dot{A}_{H_0} \cup \bigcup_{\smash[b]{\substack{i < n \\ j \in H_{ i + 1 } } } } \ran ( \dot{f}_{ i , j } ) . 
\]
\end{minipage} 
\end{equation*}

\begin{remarks*}
\begin{enumerate}[label={(\alph*)}]
\item
The  condition \( p \conc \seq{ \set{ ( k , ( \dot{R} , \emptyset ) ) } }\) in the statement of \( ( \dagger )_n \) belongs to \( \forcing{P}_{ n + 1 } \) and it is obtained by extending \( p \) with the function with domain \( \setLR{k} \subseteq \omega \) such that \( k \mapsto ( \dot{R}, \emptyset ) \in \mathcal{R}_n \times {}^{ < \omega } \omega \).
It forces  the limit points of \( \Cl_A ( \ran ( f_{ n , k }) ) \) to belong to the finite union \( A_{H_0} \cup \bigcup \setof{ \ran ( f_{ i , j } ) }{ i < n  \text{ and } j \in H_{ i + 1 }} \).

\item
Note that \( p \conc \seq{ \set{ ( k , ( \dot{R} , \emptyset ) ) } }\) is the \( \le \)-maximum among the conditions \(q \le p \) such that \( q_{ n + 1 }^R ( k ) = \dot{R} \). 
Therefore, for any fixed \( n , k \), the set of conditions \( p \conc \seq{ \set{ ( k , ( \dot{R}, \emptyset ) ) } } \) we are considering in \( ( \dagger )_n \) is pre-dense in \( \forcing{P}_{ n + 1 } \) (and also in \(\forcing{P}\)). 
\end{enumerate}
\end{remarks*}

\begin{claim}\label{claim0}
Assume \( ( \dagger ) \). 
For each \( n , k \in \omega\),  \( \forces \bigl ( \Cl_{\dot{A}} ( \ran ( \dot{f}_{ n , k } ) ) \bigr )^{( n + 2 ) } = \emptyset \).
\end{claim}

\begin{proof}
We prove the claim by induction on \( n \). 
Let \( n = 0 \) and fix \( k \in \omega \), \( p \in \forcing{P}_0 \), \( \dot{R} \in \mathcal{R}_0 \), \( \vec{H} = \seq{ H_0 } \) satisfying the hypotheses of  \( ( \dagger )_0 \). 
By \( ( \dagger )_0 \),
\[
 p \conc \seq{ \set{ ( k , ( \dot{R}, \emptyset ) ) } } \forces ( \Cl_{\dot{A}} ( \ran ( \dot{f}_{ n , k } ) ) )^{ ( 1 ) } \subseteq \dot{A}_{H_0}.
\]
Since \( H_0 \) is finite, we have that \( \forces ``\dot{A}_{H_0} \) is finite'', so our condition forces that \(  \Cl_A ( \ran f_{ n , k } ) \) has scattered height \(  \leq 2 \), that is
\[
p \conc \seq{ \set{ ( k , ( \dot{R}, \emptyset ) ) } } \forces \bigl ( \Cl_{\dot{A}} ( \ran ( \dot{f}_{ n , k } ) ) \bigr ) ^{( 2 ) } = \emptyset  .
\]
By density, the base case follows.

Now the induction step. 
Let \( n > 0 \) and fix \( k \in \omega \), \( p \in \forcing{P}_n \), \( \dot{R} \in \mathcal{R}_n \), \( \vec{H} = \seq{ H_0, \dots, H_n } \) satisfying the hypotheses of  \( ( \dagger )_n \). 
By \( ( \dagger )_n \),
\[
p \conc \seq{ \set{ ( k , ( \dot{R}, \emptyset ) ) } } \forces \bigl ( \Cl_{\dot{A}} ( \ran ( \dot{f}_{ n , k } ) ) \bigr )^{ ( 1) } \subseteq \dot{A}_{H_0} \cup \bigcup_{\smash[b]{\substack{i < n \\ j \in H_{ i + 1 } } }} \ran ( \dot{f}_{ i , j } ). 
\]
As the \( H_i \)s are all finite,
\[
p \conc \seq{ \set{ ( k , ( \dot{R}, \emptyset ) ) } } \forces \bigl ( \Cl_{\dot{A}} ( \ran ( \dot{f}_{ n , k } ) ) \bigr )^{ ( n + 2 ) } \subseteq \bigcup_{\smash[b]{\substack{i < n \\ j \in H_{ i + 1 }}}} \bigl ( \Cl_{\dot{A}} ( \ran ( \dot{f}_{ i , j } ) ) \bigr )^{ ( n + 1 ) } .
\]
By induction hypothesis, for all \( i < n \) and all \(  j \in H_{ i + 1 } \)
\[
\forces \bigl (  \Cl_{\dot{A}} ( \ran ( \dot{f}_{ i , j } ) ) \bigr )^{ ( n + 1 ) } = \emptyset  ,
\]
and hence,
\[
p \conc \seq{ \set{ ( k , ( \dot{R}, \emptyset ) ) } } \forces \bigl ( \Cl_{\dot{A}} ( \ran ( \dot{f}_{ n , k } ) ) \bigr )^{ ( n + 2 ) } = \emptyset  .
\]
By density, the induction step follows.
\end{proof}

The statement \( ( \dagger ) \) is proved by induction on \( n \in \omega  \).

Let \( n = 0 \) and fix \( k \in \omega \), \( p \in \forcing{P}_0 \), \( \dot{R} \in \mathcal{R}_0 \), \( \vec{H} = \seq{ H_0 } \) satisfying the hypotheses of \( ( \dagger )_0 \).

\begin{claim}\label{claim1}
 \( p \conc \seq{ \set{ ( k , ( \dot{R}, \emptyset ) ) } } \forces \ran ( \dot{f}_{ 0 , k } ) \setminus \dot{A}_{ H_0 } \text{ is discrete.}\)
\end{claim}
 
\begin{proof}
Assume for a contradiction that there are \( q \leq p \conc \seq{ \set{ ( k , ( \dot{R}, \emptyset ) ) } } \) and \( l \in \omega \) such that 
\[
q \forces \dot{f}_{ 0 , k } ( l ) \notin \dot{A}_{H_0} \text{ and } \dot{f}_{ 0 , k } ( l ) \text{ is a limit point of } \ran ( \dot{f}_{ 0 , k } ) \setminus \dot{A}_{H_0}.
\]
Without loss of generality suppose that \( \lh ( q_1^s ( k ) ) > l + 1 \) and let \( m = q_1^s ( k ) ( l ) \), \( t = q_0 ( m ) \)---in particular, \( m \notin H_0 \) and \( q \forces \dot{f}_{ 0 , k } ( l ) = \dot{a}_m \in \dot{\Nbhd}_t \).
From our assumption and from the fact that \( H_0 \) is a finite set, it follows that there must be a \( z \leq q \) and an \( h > l \) such that 
\[
z \forces \dot{f}_{ 0 , k } ( h ) \in \dot{\Nbhd}_t\setminus \dot{A}_{ H_0 } .
\]
Assume without loss of generality \( \lh ( z_1^s ( k ) ) > h \) and let \( m' = z_1^s ( k ) ( h ) \), \( t' = z_0 ( m ' ) \)---in particular, \( m' \notin H_0 \), \( t' \supseteq t \) and \( z_0 \forces \dot{a}_m \mathrel{\dot{R}^+} \dot{a}_{m'} \). 
 Note that \( m' \neq m \), as otherwise \( z_0 \) would force \( \dot{R} \) to have a cycle, which is a contradiction, as \( z_0 \) extends \( p \) and by hypothesis \( p \) forces \( \dot{R} \) to be acyclic.
\begin{subclaim}\label{subclaim1}
\( p' = z_0 \restriction ( \omega \setminus \set{ m } ) \cup \set{ ( m , t' ) } \forces \dot{a}_{m} \mathrel{\dot{R}^+} \dot{a}_{m'}. \)
\end{subclaim}
\noindent A quick observation: since \( z_0 \le q_0 \), \( z_0 ( m ) \) surely extends \( t = q_0 ( m ) \), but a priori \( z_0 ( m ) \) could be incompatible with \( t' = z_0 ( m' ) = p' ( m ) \), making \( p' \) incompatible with \( z_0 \). 
Thus, our subclaim needs some care. 

\begin{proof}[Proof of the Subclaim.]
Let \( m_0 , m_1 , \dots , m_{ h - l } \in \omega \) be such that \( m_i = z_1^s ( k ) ( l + i ) \) for all \( i \le h - l \). 
Note that \( m_0 = m = q_1^s ( k ) ( l ) \) and \( m_1 = q_1^s ( k ) ( l + 1 ) \), since \( z \le q \). 
Moreover, \( m_{ h - l } = m' \), by definition of \( m' \). 
We can assume that the \( m_i \)s are all distinct, as otherwise \( z_0 \) would force \( \dot{R} \) to have a cycle.

Clearly \( q_0 \forces \dot{a}_{m_0} \mathrel{\dot{R}} \dot{a}_{m_1} \). 
By the Restriction Lemma, \( q_0 \restriction H_0 \cup \set{ m_0 , m_1 } \) forces the same.

On the other hand, \( z_0 \forces \dot{a}_{m_1} \mathrel{\dot{R}} \dot{a}_{m_{2}} \mathrel{\dot{R}} \dots \mathrel{\dot{R}} \dot{a}_{m_{ h - l }} \). 
Again by the Restriction Lemma, \( z_0 \restriction H_0 \cup \set{ m_1 , m_2 , \dots, m_{ h - l } } \) forces the same.

The condition \( p' = z_0 \restriction ( \omega \setminus \set{ m_0 } ) \cup \set{ ( m_0 , t' ) } \) extends both \( q_0 \restriction H_0 \cup \set{ m_0 , m_1 } \) and \( z_0 \restriction H_0 \cup \set{ m_1 , m_2 , \dots , m_{ h - l } } \)---here use the fact that \( m_0 \notin H_0 \) and that all the \( m_i \)s are distinct. 
Hence \( p' \) forces \( \dot{a}_{m_0} \mathrel{\dot{R}} \dot{a}_{m_{1}} \mathrel{\dot{R}} \dot{a}_{m_{2}} \mathrel{\dot{R}} \dots \mathrel{\dot{R}} \dot{a}_{m_{h-l}} \).
\end{proof}

Let \( \pi_0 \colon \omega \to \omega \) be the permutation that swaps \( m \) and \( m' \) fixing everything else---in particular, \( \pi_0 \in \Fix (H_0) \) and \( \pi_0 \dot{R} = \dot{R} \). 
Then, by the Symmetry Lemma,
\[
\pi_0 p' = p' \forces \dot{a}_{m'} \mathrel{\dot{R}^+} \dot{a}_m,
\] 
but then \( p' \) both extends \( p \) and forces \( \dot{a}_{m} \mathrel{\dot{R}^+} \dot{a}_m \), which is a contradiction, since we assumed that \( p \) forces \( \dot{R} \) to be acyclic.
\end{proof}

By Claim~\ref{claim1}, condition \( p \conc \seq{ \set{ ( k , ( \dot{R}, \emptyset ) ) } } \) forces that the limit points of \( \ran ( f_{ 0 , k } ) \) belong to the finite set \( A_{H_0 } \).
The next claim shows that the same is true for the larger set \( \Cl_{A} (  \ran ( f_{ 0 , k } ) ) \).

\begin{claim}\label{claim2}
\( p \conc \seq{ \set{ ( k , ( \dot{R}, \emptyset ) ) } } \forces \bigl ( \Cl_{\dot{A}} ( \ran ( \dot{f}_{ 0 , k } ) ) \bigr )^{ ( 1 ) } \subseteq \dot{A}_{ H_0 } \).
\end{claim}

\begin{proof}
Suppose for a contradiction that the claim is false. Then there is a \( q \leq p \conc \seq{ \set{ ( k , ( \dot{R}, \emptyset ) ) } } \) and an \( m \notin H_0 \) such that 
\begin{equation}\label{eq:claim2}
q \forces \dot{a}_m \text{ is a limit point of }\ran ( \dot{f}_{ 0 , k } ) .
\end{equation}
Note that, since \( H_0 \) is finite, \( q \) actually forces \( \dot{a}_m \) to be a limit point of \( \ran ( \dot{f}_{ 0 , k } ) \setminus \dot{A}_{ H_0 } \). 
Hence, it follows from Claim~\ref{claim1} that \( q \) forces \( \dot{a}_m \) not to be in the range of \( \dot{f}_{ 0 , k } \). 
In particular, \( m \notin \ran ( q_1^s ( k ) ) \).

The condition \( q' = \seq{ q_0, q_1 \restriction \set{ k } } \) extends \( p \) and, by the Restriction Lemma, still forces \eqref{eq:claim2}.
Let \( t \) be \( q_0 ( m ) \)---in particular, \( q' \forces \dot{a}_m \in \dot{\Nbhd}_t \).
 
We now show \( q' \forces \dot{\Nbhd}_t \subseteq \Cl ( \ran ( \dot{f}_{ 0 , k } ) \setminus \dot{A}_{H_0} ) \), which clearly contradicts Claim~\ref{claim1}, as every discrete set of reals is nowhere dense. 
Pick any \( z \leq q' \) and a \( t'\supseteq t \). 
Fix an \( m' \notin H_0 \cup \dom ( z_0 ) \cup \ran ( q_1^s ( k ) ) \). 
Define \( z' \) to be the condition such that \( z_0' = z_0 \cup \set{ ( m' , t' ) } \) and \( z_i' = z_i \) for every \( i > 0 \). 

Now, \( z' \) clearly extends \( z \). 
Moreover, if we let \( \pi_0 \) be the permutation of \( \omega \) that swaps \( m \) and \( m' \), \( z' \) also extends \( \seq{ \pi_0 } q' \). 
Indeed, since \( t' \supseteq t \), it's clear that \( z'_0 \) extends \( \pi_0 q_0 \). 
But since both \( m \) and \( m' \) do not belong to \( H_0 \cup \ran ( q_1^s ( k ) ) \), we also have \( ( \seq{ \pi_0 } q')_1 = q'_1 \), and therefore \( \seq{ z'_0, z'_1} = \seq{ z'_0, z_1 } \) extends \( \seq{ \pi_0 } q' = \seq{ \pi_0 q_0 , q'_1 } \). 
Overall, \( z' \) extends \( \seq{ \pi_0 } q' \).

By \eqref{eq:claim2} and the Symmetry Lemma,
\[ 
\seq{ \pi_0 } q' \forces \dot{a}_{m'} \in \Cl ( \ran ( \dot{f}_{ 0 , k } ) \setminus \dot{A}_{H_0} ) .
\] 
Since \( z' \) extends \( \seq{ \pi_0 } q' \) and \( z' \forces \dot{a}_{m'} \in \dot{\Nbhd}_{t'} \), we have
\[ 
z' \forces \dot{\Nbhd}_{t'} \cap \Cl ( \ran ( \dot{f}_{ 0 , k } ) \setminus \dot{A}_{H_0}) \neq \emptyset.
\]
By density,
\[
q' \forces \dot{\Nbhd}_t \subseteq \Cl ( \ran ( \dot{f}_{ 0 , k } ) \setminus \dot{A}_{H_0} ).\qedhere
\]
\end{proof}
This proves \( ( \dagger )_0 \).

Here comes the induction step: fix an \( n > 0 \) and suppose \( ( \dagger )_i \) holds for every \( i < n \), towards proving \( ( \dagger )_n \).
Fix \( k \in \omega \), \( p \in \forcing{P}_n \), \( \dot{R} \in \mathcal{R}_n \), \( \vec{H} = \seq{ H_0, \dots, H_n } \) satisfying the hypotheses of \( ( \dagger )_n \).
The next claim is the analogue of Claim~\ref{claim1}.

\begin{claim}\label{claim3}
\[
p \conc \seq{ \set{ ( k , ( \dot{R}, \emptyset ) ) } } \forces \ran ( \dot{f}_{ n , k }) \setminus \Bigl ( \dot{A}_{H_0} \cup \bigcup_{\smash[b]{\substack{i < n \\ j \in H_{ i + 1 }}}}  \ran ( \dot{f}_{ i , j })  \Bigr ) \text{ is discrete}.
\]
\end{claim}

\begin{proof}
Suppose for a contradiction that there are \( q \leq \seq{ p, \set{ ( k , ( \dot{R} , \emptyset ) ) } } \) and \( l \in \omega \) such that 
\begin{multline*}
q \forces \dot{f}_{ n , k } ( l ) \notin \dot{A}_{H_0} \cup \smash[b]{\bigcup_{\substack{i < n \\ j \in H_{ i + 1 } } }} \ran ( \dot{f}_{ i , j }) \text{ and }\dot{f}_{ n , k } ( l ) \text{ is a limit point of }
\\
\ran ( \dot{f}_{ n , k } ) \setminus \Bigl ( \dot{A}_{H_0} \cup \bigcup_{\smash[b]{\substack{i < n \\ j \in H_{ i + 1 } }}} \ran ( \dot{f}_{ i , j } ) \Bigr ) .
\end{multline*}
Suppose without loss of generality that \( \lh ( q_{ n + 1 } ^s ( k ) ) > l + 1 \) and let \( m = q_{ n + 1 }^s ( k ) ( l ) \), and \( t = q_0 ( m ) \)---in particular, \( q \forces \dot{f}_{ n , k } ( l ) = \dot{a}_m \in \dot{\Nbhd}_t \). 
By assumption there must be a \( z \leq q \) and an \( h > l \) such that
\[
z \forces \dot{f}_{ n , k } ( h ) \in \dot{\Nbhd}_t \setminus \bigg ( \dot{A}_{ H_0 } \cup \bigcup_{\smash[b]{ \substack{i < n \\ j \in H_{i + 1}}}} \ran ( \dot{f}_{ i , j } ) \bigg). 
\]
Assume without loss of generality \( \lh ( z_{n + 1 }^s ( k ) ) > h \) and let \( m' = z_{ n + 1 }^s ( k ) ( h ) \), and \( t' = z_0 ( m ' ) \)---in particular \( t' \supseteq t \) and \( z \restriction n + 1 \forces \dot{a}_m \mathrel{\dot{R}^+ } \dot{a}_{m'} \). 
Since 
\[
z \forces \dot{a}_m, \dot{a}_m' \notin \bigg ( \dot{A}_{H_0} \cup \bigcup_{\smash[b]{ \substack{i < n \\ j \in H_{i + 1}}}} \ran ( \dot{f}_{ i , j } ) \bigg),
\]
then, in particular,
\begin{equation}\label{eq:claim3}
m,m' \notin H_0 \cup \bigcup_{\smash[b]{\substack{i < n \\ j \in H_{i + 1}}} } \ran \big(z^s_{i+1}(j)\big).
\end{equation}
Now let
\[
p' = \seq{ z_0 \restriction ( \omega \setminus \set{ m } ) \cup \set{ ( m , t' ) } , z_1 \restriction H_1, \dots, z_n \restriction H_n } .
\] 
By an argument analogous to the one used in the proof of Subclaim~\ref{subclaim1}, we can show that
\[
p' \forces \dot{a}_m \mathrel{ \dot{R}^+ } \dot{a}_{m'}.
\]
If we let \( \pi_0 \colon \omega \to \omega \) be the permutation that swaps \( m \) and \( m' \), then \( \seq{ \pi_0 } p' = p' \). 
Indeed, it directly follows from the definition of \( p' \) that \( \pi_0 p'_0 = p'_0 \). 
Moreover, by \eqref{eq:claim3}, both \( m \) and \( m' \) do not belong to \( H_0 \), hence \( \seq{ \pi_0 } \in \Fix ( \vec{H} ) \). 
As such, \( ( \seq{ \pi_0 } p' )_i^R = ( p' )_i^R \) for every \( 1 \le i \le n \). 
Again by~\eqref{eq:claim3}, \( m \) and \( m' \) do not belong to the range of \( ( p' )_i ^s ( j ) \) for any \( 1 \le i \le n \) and \( j \in \dom ( p'_i ) = H_i \), and therefore \( ( \seq{ \pi_0 } p' )_i ^s = ( p' )_i ^s \) for every \( 1 \le i \le n \). 
Overall, \( \seq{ \pi_0 } p' = p' \).

Next note that \( \seq{ \pi_0 } \dot{R} = \dot{R} \), as \( \seq{ \pi_0 } \in \Fix ( \vec{H} ) \). 
By the Symmetry Lemma,
\[
\seq{ \pi_0 } p' = p' \forces \dot{a}_{m'} \mathrel{ \dot{R}^+ } \dot{a}_{m} ,
\] 
but then \( p' \) both extends \( p \) and forces \( \dot{a}_m \mathrel{ \dot{R}^+ } \dot{a}_m \), which is a contradiction, since we assumed that \( p \) forces \( \dot{R} \) to be acyclic.
\end{proof}

\begin{claim}\label{claim4}
\[
 p\forces \dot{A}_{H_0} \cup \bigcup_{\smash[b]{\substack{i < n \\ j \in H_{i+1}}}} \ran ( \dot{f}_{i,j}) \text{ is closed with respect to } \dot{A}. 
\]
\end{claim}
\begin{proof}
Fix \( q \le p \) and \( m \) such that 
\[
q \forces \dot{a}_m \in \Cl_{\dot{A}}\Bigl ( \dot{A}_{H_0} \cup \bigcup_{ \smash[b]{\substack{i < n \\ j \in H_{ i + 1 } } } } \ran ( \dot{f}_{ i , j } ) \Bigr ) .
\]
We would like to prove that there is a condition \( z \le q \) such that 
\[
z \forces \dot{a}_m \in \dot{A}_{H_0} \cup \bigcup_{\smash[b]{ \substack{i < n \\ j \in H_{ i + 1 }}} } \ran ( \dot{f}_{ i , j } ) ,
\]
so, to avoid trivialities, we assume
\[
q \forces \dot{a}_m \in \Bigl ( \Cl_{\dot{A}} \bigl ( \dot{A}_{H_0} \cup \bigcup_{\smash[b]{ \substack{i < n \\ j \in H_{ i + 1 } } } } \ran ( \dot{f}_{ i , j } ) \bigr ) \Bigr )^{ ( 1 ) }.
\]

As the \( H_i \)s are finite, there exists a \( z \le q \), an \( i < n \) and some \( j \in H_{ i + 1 } \) such that \( z \forces \dot{a}_m \in ( \Cl_{\dot{A}} ( \ran ( \dot{f}_{ i , j } ) ) )^{ ( 1 ) } \). 
But then, by \( ( \dagger )_i \) (here we use our induction hypothesis), 
\[
z \forces \dot{a}_m \in \bigl ( \Cl_{\dot{A}} ( \ran ( \dot{f}_{ i , j } ) ) \bigr )^{ ( 1 ) } \subseteq \dot{A}_{ H_0 } \cup \bigcup_{\smash[b]{\substack{l < i \\ h \in H_{ l + 1 } } } } \ran ( \dot{f}_{ l , h } ) .
\]
By density, the claim follows.
\end{proof}

The next claim is the analogue of Claim~\ref{claim2}.

\begin{claim}\label{claim5}
\[
p \conc \seq{ \set{ ( k , ( \dot{R}, \emptyset ) ) } } \forces \bigl ( \Cl_{\dot{A}} ( \ran ( \dot{f}_{ n , k } ) ) \bigr )^{ ( 1 ) } \subseteq \dot{A}_{H_0} \cup \bigcup_{\smash[b]{ \substack{i < n \\ j \in H_{ i + 1 } } } } \ran ( \dot{f}_{ i , j } ) . 
\]
\end{claim}

\begin{proof}
Suppose for a contradiction that this is not the case, then there is a \( q \leq p \conc \seq{ \set{ ( k , ( \dot{R}, \emptyset ) ) } } \) and an \( m \) such that
\[
q \forces \dot{a}_m \text{ is a limit point of } \ran ( \dot{f}_{ n , k } ) \text{ and } \dot{a}_m \notin \dot{A}_{ H_0 } \cup \bigcup_{\smash[b]{ \substack{i < n \\ j \in H_{ i + 1 }}}} \ran ( \dot{f}_{ i , j } ) .
\]
From Claim~\ref{claim4} it follows that 
\begin{equation}\label{eq:claim5}
q \forces \dot{a}_m \text{ is a limit point of } \ran ( \dot{f}_{ n , k } ) \setminus \Bigl ( \dot{A}_{ H_0 } \cup \bigcup_{\smash[b]{\substack{i < n \\ j \in H_{ i + 1 } } } } \ran ( \dot{f}_{ i , j } ) \Bigr ) .
\end{equation}

But then, by Claim~\ref{claim3}, \( q \) also forces \( \dot{a}_m \) not to be in the range of \( \dot{f}_{ n , k } \). 
In particular,
\begin{equation}\label{eq:claim5m1}
m \notin H_0 \cup \ran ( q_{ n + 1 }^s ( k ) ) \cup \bigcup_{\smash[b]{\substack{i < n \\ j \in H_{ i + 1 } } } } \ran ( q_{ i + 1 }^s ( j ) ).
\end{equation}

Let 
\[
q' = \seq{ q_0, q_1 \restriction H_1, \dots, q_n \restriction H_n, q_{ n + 1 } \restriction \set{ k } } .
\] 
Then \( q' \) extends \( p \) and, by the Restriction Lemma, still forces \eqref{eq:claim5}.
Let \( t \) be \( q_0 ( m ) \)---in particular \( q' \forces \dot{a}_m \in \dot{\Nbhd}_t \). 
 
We now show that
\[
q' \forces \dot{\Nbhd}_t \subseteq \Cl \Bigl ( \ran ( \dot{f}_{ n , k } ) \setminus \bigl ( \dot{A}_{H_0} \cup \bigcup_{\substack{i < n \\ j \in H_{ i + 1 } } } \ran ( \dot{f}_{ i , j } ) \bigr ) \Bigr ) ,
\]
which contradicts Claim~\ref{claim3}. 
Pick any \( z \leq q' \) and \( t'\supseteq t \). 
Fix an \( m' \in \omega \) such that 
\begin{equation}\label{eq:claim5m2}
m' \notin H_0 \cup \dom ( z_0 ) \cup \ran ( q_{ n + 1 }^s ( k ) ) \cup \bigcup_{\substack{i < n \\ j \in H_{i + 1 } } } \ran ( q_{ i + 1 }^s ( j ) ) .
\end{equation}

Define \( z' \) to be the condition such that \( z'_0 = z_0 \cup \set{ ( m' , t' ) } \) and \( z_i' = z_i \) for all \( i > 0 \). 
Now, \( z' \) clearly extends \( z \). 
Moreover, if we let \( \pi_0 \) be the permutation of \( \omega \) that swaps \( m \) and \( m' \), \( z' \) also extends \( \seq{ \pi_0 } q' \). 
Indeed, since \( t' \supseteq t \), it's clear that \( z'_0 \) extends \( \pi_0 q_0 \). 
But from~\eqref{eq:claim5m1} and~\eqref{eq:claim5m2}, it follows that \( ( \seq{ \pi_0 } q' )_i = q'_i \) for every \( 1 \le i \le n+1 \), and therefore \( z' \) extends \( \seq{ \pi_0 } q' \).

By~\eqref{eq:claim5} and the Symmetry Lemma,
\[
\seq{ \pi_0 } q' \forces \dot{a}_{m'} \in \Cl \Bigl ( \ran ( \dot{f}_{ n , k } ) \setminus \bigl ( \dot{A}_{H_0} \cup \bigcup_{\smash[b]{\substack{i < n \\ j \in H_{ i + 1 } } } } \ran ( \dot{f}_{ i , j } ) \bigr ) \Bigr ) .
\]
Since \( z' \) extends \( \seq{ \pi_0 } q' \) and \( z' \forces \dot{a}_{m'} \in \dot{\Nbhd}_{t'} \), we have
\[
z' \forces \dot{\Nbhd}_{t'} \cap \Cl \Bigl ( \ran ( \dot{f}_{ n , k } ) \setminus \bigl ( \dot{A}_{ H_0 } \cup \bigcup_{\smash[b]{\substack{i < n \\ j \in H_{ i + 1 } } } } \ran ( \dot{f}_{ i , j } ) \bigr ) \Bigr ) \neq \emptyset.
\]
By density, 
\[
q' \forces \dot{\Nbhd}_t \subseteq \Cl \Bigl ( \ran ( \dot{f}_{ n , k } ) \setminus \bigl ( \dot{A}_{ H_0 } \cup \bigcup_{\substack{i < n \\ j \in H_{ i + 1 } } } \ran ( \dot{f}_{ i , j } ) \bigr ) \Bigr ) . \qedhere
\]
\end{proof}
This completes the proof of  \( ( \dagger )_n \), so by induction \( ( \dagger ) \) holds.
By Claim~\ref{claim0}, we are done.
\end{proof}

In light of Proposition~\ref{lem:core}, we can prove that in \( \mathcal{N} \) every separable subset of \( A \) is scattered with finite scattered height.

\begin{theorem}\label{th:separablesub}
In the model \( \mathcal{N} \) the following holds: for every separable \( S \subseteq A \) there is an \( n \in \omega \) such that \( S^{ ( n ) } = \emptyset \).
\end{theorem}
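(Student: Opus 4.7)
My plan is to reduce the statement to an auxiliary lemma saying that any countable subset of \( A \) in \( \mathcal{N} \) is contained in the union of \( A_{H_0} \) (for some finite \( H_0 \)) with finitely many of the ranges \( \ran ( f_{i,k} ) \), and then finish via Proposition~\ref{lem:core}.

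Let \( S \subseteq A \) be separable in \( \mathcal{N} \), and fix a countable dense \( D \subseteq S \), so that \( S \subseteq \Cl_A ( D ) \). Enumerate \( D \) as the range of some \( f \colon \omega \to A \) in \( \mathcal{N} \), and pick a hereditarily symmetric name \( \dot{f} \in \HS \) for \( f \) with support \( \vec{H} = \seq{ H_0 , H_1 , \dots , H_m } \), chosen (after fattening if necessary) so that \( \vec{H} \restriction i \) is also a support for each relevant \( p_i^R ( k ) \) that arises below.

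The key step is to prove
\[
\forces \ran ( \dot{f} ) \subseteq \dot{A}_{ H_0 } \cup \bigcup_{ 1 \le i \le m } \bigcup_{ k \in H_i } \ran ( \dot{f}_{ i - 1 , k } ) .
\]
Suppose, for a contradiction, that some \( p \in \forcing{P} \) forces, for certain \( n , l \in \omega \), both \( \dot{f} ( \check{n} ) = \dot{a}_l \) and \( \dot{a}_l \notin \dot{A}_{ H_0 } \cup \bigcup \ran ( \dot{f}_{ i - 1 , k } ) \). Then \( l \notin H_0 \), and, because \( p \) forces \( \dot{a}_l \) never to enter any of the chains \( \dot{f}_{ i - 1 , k } \) with \( k \in H_i \), the index \( l \) avoids \( \ran ( p_i^s ( k ) ) \) for every such \( i , k \). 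In the spirit of Claim~\ref{claim5} in the proof of Proposition~\ref{lem:core}, we build an automorphism \( \vec{\pi} \in \Fix ( \vec{H} ) \) whose Cohen-coordinate \( \pi_0 \) transposes \( l \) with a fresh \( l' \) (fresh with respect to \( \dom ( p_0 ) \), \( H_0 \), and every \( \ran ( p_i^s ( k ) ) \)) and whose higher-stage coordinates \( \pi_i \) move \( \dom ( p_i ) \setminus H_i \) off of itself, so that \( p \) and \( \vec{\pi} p \) become compatible. Since \( \pi_0 \) fixes every entry of every \( p_i^s ( k ) \) with \( k \in H_i \), the coordinates where the supports \( H_i \) are active are undisturbed, and compatibility holds stage by stage. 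The Symmetry Lemma then yields \( \vec{\pi} p \forces \dot{f} ( \check{n} ) = \dot{a}_{l'} \), contradicting the compatibility of \( p \) and \( \vec{\pi} p \).

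Once the inclusion above is established, it realizes in \( \mathcal{N} \) as \( D \subseteq A_{ H_0 } \cup \bigcup_{ 1 \le i \le m , \, k \in H_i } \ran ( f_{ i - 1 , k } ) \), a finite union. Since closure relative to \( A \) distributes over finite unions,
\[
 S \subseteq \Cl_A ( D ) \subseteq A_{ H_0 } \cup \bigcup_{ 1 \le i \le m } \bigcup_{ k \in H_i } \Cl_A ( \ran ( f_{ i - 1 , k } ) ) .
\]
By Proposition~\ref{lem:core} each \( \Cl_A ( \ran ( f_{ i - 1 , k } ) ) \) has scattered height at most \( i + 1 \), while \( A_{ H_0 } \) is finite and hence has scattered height at most \( 1 \). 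The elementary inclusion \( ( X \cup Y )^{ ( \alpha ) } \subseteq X^{ ( \alpha ) } \cup Y^{ ( \alpha ) } \) (which extends by induction to any finite union and any finite \( \alpha \)) shows that the right-hand side has scattered height at most \( m + 1 \), and therefore \( S^{ ( m + 1 ) } = \emptyset \).

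The main obstacle is the symmetry argument in the key step: arranging \( \vec{\pi} \) so that it simultaneously belongs to \( \Fix ( \vec{H} ) \) (fixing \( \dot{f} \) and each relevant \( p_i^R ( k ) \)) and produces a \( \vec{\pi} p \) compatible with \( p \) at every stage. As in Claims~\ref{claim3} and~\ref{claim5} of Proposition~\ref{lem:core}, the delicate point is the bookkeeping of indices appearing in the \( \dom ( p_i ) \)'s and in the ranges of the \( p_i^s ( k ) \)'s, exploiting the hypothesis that \( l \) (and the fresh \( l' \)) avoid \( H_0 \) and every \( \ran ( p_i^s ( k ) ) \) with \( k \in H_i \).
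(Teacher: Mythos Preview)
Your overall plan matches the paper's: show that the range of the enumerating function \(f\) is contained in \(A_{H_0}\) together with finitely many \(\ran(f_{i,j})\)'s, then finish with Proposition~\ref{lem:core}. The gap is in the symmetry step. You assert that \(1_{\forcing{P}}\) forces the inclusion and then take an arbitrary \(p\) witnessing failure, but your parenthetical ``fattening'' of \(\vec{H}\) so that \(\vec{H}\restriction i\) supports each \(p_i^R(k)\) is circular: \(\vec{H}\) is fixed before \(p\) appears, yet the fattening you need depends on \(p\). Without that alignment, for \(k\in H_i\cap\dom(p_i)\) the name \(p_i^R(k)\) need not be fixed by \(\vec{\pi}\restriction i\), so \((\vec{\pi}p)_i^R(k)\) may differ from \(p_i^R(k)\) and the two conditions are then incompatible at that coordinate---your higher-stage \(\pi_i\)'s only move \(\dom(p_i)\setminus H_i\) out of the way and do nothing for the coordinates inside \(H_i\).

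The paper fixes this by first choosing a specific \(p\in G\) forcing \(\dot{f}\colon\omega\to\dot{A}\), and \emph{then} fattening \(\vec{H}\) together with \(p\) so that \(\dom(p_i)=H_i\) and each \(p_i^R(j)\) (for \(j\in H_i\)) already has support \(\vec{H}\restriction i\). The inclusion is then claimed for this \(p\), not for \(1\); the contradicting condition is some \(q\le p\), and since \(q\le p\) one gets \(q_i^R(k)=p_i^R(k)\) for \(k\in H_i\), so the support hypothesis is inherited. At that point the paper applies the Restriction Lemma to pass to \(q'=\seq{q_0,\,q_1\restriction H_1,\dots,q_n\restriction H_n}\), after which a single transposition \(\pi_0\) at stage \(0\) already makes \(\seq{\pi_0}q'\) compatible with \(q'\); no higher-stage permutations are needed. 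Your more elaborate \(\vec{\pi}\) can be made to work once you adopt the ``pick \(p\in G\) first, then fatten'' order, but the Restriction Lemma route is both shorter and avoids the stage-by-stage compatibility bookkeeping you flag as the main obstacle.
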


\begin{proof}
Let \( S \in \mathcal{N} \) be a separable subset of \( A \) and fix in \( \mathcal{N} \) a function \( f \colon \omega \to A \) such that \( S \subseteq \Cl ( \ran ( f ) ) \). 
Then there must be a \( p \in G \) such that 
\[
p \forces \dot{f} \colon \check{\omega} \to \dot{A},
\]
where \( \dot{f} \in \HS \) is a symmetric name for \( f \), with support \( \vec{H} = \seq{ H_0, \dots, H_n } \).
We can assume without loss of generality that \( \dom (p_i) = H_i \) for each \( i \), and that for all \( i > 0 \), for all \( j \in H_i \), \( \vec{H}\restriction i \) is a support for \( p_{i}^R ( j ) \).
We claim that 
\[
p \forces \ran ( \dot{f} ) \subseteq \dot{A}_{H_0} \cup \bigcup_{\substack{i < n \\ j \in H_{ i + 1 } } } \ran ( \dot{f}_{ i , j } ) ,
\]
where \( \dot{A}_{H_0} \) is the \( \forcing{P} \)-name as in~\eqref{eq:dotA_H}.
If we manage to do so, then Proposition~\ref{lem:core} ensures that \( \Cl_A ( \ran ( f ) ) \) is scattered of height \( \leq n + 2 \), and, \emph{a fortiori}, that \( S^{( n + 2 )}  = \emptyset \), as required.
 
Suppose that the claim is false, then there exist \( q \leq p\) and \( l , m \in \omega \) such that 
\begin{equation}\label{eq:thmsepsub1}
q \forces \dot{f} ( l ) = \dot{a}_m \notin \dot{A}_{H_0} \cup\bigcup_{\smash[b]{\substack{i < n \\ j \in H_{i + 1 } } }} \ran ( \dot{f}_{ i , j } ) .
\end{equation}
In particular, 
\begin{equation}\label{eq:thmsepsub2}
m \notin H_0 \cup \bigcup_{\substack{i < n \\ j \in H_{i + 1 } } } \ran ( q_{ i + 1 } ^s ( j ) ) .
\end{equation}

Let \( q' = \seq{ q_0, q_1 \restriction H_1, \dots, q_n \restriction H_n } \). 
Then, by the Restriction Lemma, \( q' \) still forces~\eqref{eq:thmsepsub1}.

Fix an \( m' \in \omega \) such that 
\begin{equation}\label{eq:thmsepsub3}
m' \notin H_0 \cup \dom ( q_0 ) \cup \bigcup_{\substack{i < n \\ j \in H_{i + 1 } } } \ran (q_{ i + 1 } ^s ( j ) ) .
\end{equation}
Let \( \pi_0 \) be the permutation of \( \omega \) that swaps \( m \) and \( m' \), then \( \seq{ \pi_0 } q' \) and \( q' \) are compatible. 
Indeed, since \( m' \notin \dom (q_0') \), then \( q'_0 \) and \( \pi_0 q'_0 \) are clearly compatible. 
Moreover, it follows from \eqref{eq:thmsepsub2} and \eqref{eq:thmsepsub3} that \( ( \seq{ \pi_0 } q')_i = q'_i \) for every \( 1 \le i \le n \), and therefore \( \seq{ \pi_0 } q' \) and \( q' \) are compatible. 
By the Symmetry Lemma,
\[
\seq{ \pi_0 } q' \forces \dot{f} ( l ) = \dot{a}_{m'} .
\]
So \( q' \) and \( \seq{ \pi_0 } q' \), while being compatible, force \( \dot{f} \) to take different values at \( l \), but they both extend \( p \), which forces \( \dot{f} \) to be a function. 
Contradiction.
\end{proof}

\begin{corollary}\label{cor:separable}
\( \mathcal{N} \Models \neg \AComega ( A ) \).
\end{corollary}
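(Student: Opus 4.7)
The plan is to derive the corollary directly from Theorem~\ref{th:separablesub} together with Lemma~\ref{lem:propertiesofA}\ref{lem:propertiesofA-a}. The strategy is to show, in $\mathcal{N}$, that $A$ itself is not separable; since Lemma~\ref{lem:propertiesofA}\ref{lem:propertiesofA-a} tells us that $\AComega(A)$ would force $A$ to be separable, this immediately yields $\mathcal{N} \models \neg \AComega(A)$.

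To see that $A$ is not separable in $\mathcal{N}$, I would argue by contradiction. Assume $A$ were separable. Then by Theorem~\ref{th:separablesub} applied to $S = A$, there would exist some $n \in \omega$ such that $A^{(n)} = \emptyset$, i.e.~$A$ would be scattered. So it suffices to observe that $A$ is dense in itself, whence $A^{(1)} = A$ and by induction $A^{(n)} = A \neq \emptyset$ for every $n$.

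The density-in-itself of $A$ is essentially built into the construction in Section~\ref{subsec:firstCohen}: the name $\dot{A}$ is forced by $\forcing{P}_0$ to be a dense subset of ${}^\omega 2$, and a standard symmetry/genericity argument shows that no Cohen real $a_m$ is isolated in $A$, since for any basic neighborhood $\Nbhd_t$ containing $a_m$, the genericity of the construction places cofinally many other $a_{m'} \in \Nbhd_t \cap A$. Hence $A$ has no isolated points, so $A^{(1)} = A$, contradicting scatteredness.

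The argument is short and essentially no step is a real obstacle: the combinatorial heavy lifting was already done in Proposition~\ref{lem:core} and Theorem~\ref{th:separablesub}. The only point requiring a moment's care is verifying that density-in-itself of $A$ survives the passage to the iterated symmetric extension $\mathcal{N}$; but this is absolute, since it only asserts that for each $a \in A$ and each basic open $\Nbhd_t$ containing $a$ there is another point of $A$ in $\Nbhd_t$, a fact witnessed already in $\mathcal{N}_0 \subseteq \mathcal{N}$.
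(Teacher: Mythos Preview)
Your proof is correct and follows essentially the same route as the paper: assume $\AComega(A)$, conclude $A$ is separable (the paper just says ``certainly separable'' where you cite Lemma~\ref{lem:propertiesofA}\ref{lem:propertiesofA-a}), apply Theorem~\ref{th:separablesub} to get $A$ scattered, and contradict the fact that $A$ has no isolated points. Your added remarks on why $A$ is dense in itself and why this is absolute are fine elaborations of a point the paper simply asserts.
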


\begin{proof}
Assume for a contradiction that \( \AComega ( A ) \) holds, then \( A \) is certainly separable. 
By Theorem~\ref{th:separablesub}, \( A \) would be scattered. 
But \( A \) has no isolated points. 
Contradiction.
\end{proof}

Now we are left to prove that \( \DC ( A ) \) holds in \( \mathcal{N} \). 
Let \( \dot{\mathcal{N}}_n \) be the canonical name for the intermediate model \( \mathcal{N}_n \).

\begin{lemma}\label{lem:modcl}
Let \( n \in \omega \) and \( \dot{x} \in \HS \) with support \( \vec{H} = \seq{ H_0, \dots, H_n } \), then 
\[
 \forces \dot{x} \subseteq \dot{\mathcal{N}}_n \IMPLIES \dot{x} \in\dot{\mathcal{N}}_n \, .
\]
\end{lemma}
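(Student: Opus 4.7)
My proposal is to construct a \( \forcing{P}_n \)-name
\[
\dot{y} \coloneqq \setof{ ( \dot{z} , p ) }{ \dot{z} \in \HS _n \AND p \in \forcing{P}_n \AND p \forces \dot{z} \in \dot{x} }
\]
(implicitly bounding the rank of the \( \dot{z} \)'s so that \( \dot{y} \) is a set) and show that \( \dot{y} \in \HS _n \) with \( \dot{y}_{G \cap \forcing{P}_n} = \dot{x}_G \) for every \( \Vv \)-generic \( G \) such that \( \dot{x}_G \subseteq \mathcal{N}_n \). By the Truth Lemma this yields the forcing statement.

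The first step is to check symmetry. Embedding \( \mathcal{G}_n \) into \( \mathcal{G} \) via \( j_{ n , \omega } \), any \( \vec{\sigma} \in \Fix ( \vec{H} ) \cap \mathcal{G}_n \) satisfies \( \vec{\sigma} \dot{x} = \dot{x} \), so the Symmetry Lemma gives \( p \forces \dot{z} \in \dot{x} \IFF \vec{\sigma} p \forces \vec{\sigma} \dot{z} \in \dot{x} \), whence \( \vec{\sigma} \dot{y} = \dot{y} \). As every name in \( \dom ( \dot{y} ) \) already lies in \( \HS _n \) by construction, this gives \( \dot{y} \in \HS _n \) with support \( \vec{H} \). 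For the equality \( \dot{x}_G = \dot{y}_{G_n} \) (with \( G_n \coloneqq G \cap \forcing{P}_n \)), the inclusion \( \dot{y}_{G_n} \subseteq \dot{x}_G \) is immediate from the definition. For the reverse inclusion, given \( z \in \dot{x}_G \), the hypothesis furnishes \( \dot{z} \in \HS _n \) naming \( z \), and the Truth Lemma produces \( p \in G \) with \( p \forces \dot{z} \in \dot{x} \); the goal is then to show that \( p \restriction (n+1) \in \forcing{P}_n \) also forces \( \dot{z} \in \dot{x} \), which will place \( ( \dot{z} , p \restriction ( n + 1 ) ) \) into \( \dot{y} \) with \( p \restriction ( n + 1 ) \in G_n \).

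The heart of the argument, and the step I expect to be the main obstacle, is this ``descent'' from a \( \forcing{P} \)-condition to a \( \forcing{P}_n \)-condition. I plan to tackle it by contradiction and a symmetry argument that exploits the fact that \( \vec{H} \) does not constrain coordinates beyond \( n \). Suppose some \( q \le p \restriction ( n + 1 ) \) in \( \forcing{P} \) forced \( \dot{z} \notin \dot{x} \). Fix a common support \( \vec{H}' \) of length \( n + 1 \) for \( \dot{x} \) and \( \dot{z} \) and choose \( \vec{\pi} \in \Fix ( \vec{H}' ) \) with \( \pi_i = \mathrm{id} \) for \( i \le n \) and \( \pi_i \) (for \( i > n \)) shifting \( \dom ( q_i ) \) off of \( \dom ( p_i ) \). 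Triviality of \( \vec{\pi} \) on the lower coordinates ensures \( \vec{\pi} \dot{x} = \dot{x} \), \( \vec{\pi} \dot{z} = \dot{z} \), and \( ( \vec{\pi} q )_i = q_i \le p_i \) for \( i \le n \), while disjointness of the higher-coordinate domains makes \( \vec{\pi} q \) trivially compatible with \( p \) there. Thus \( \vec{\pi} q \) and \( p \) are compatible, yet by the Symmetry Lemma \( \vec{\pi} q \forces \dot{z} \notin \dot{x} \), contradicting \( p \forces \dot{z} \in \dot{x} \). This descent step combined with the identification of \( \dot{y} \) as a name in \( \HS _n \) completes the proof.
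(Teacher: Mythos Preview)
Your proposal is correct and follows essentially the same strategy as the paper: build a \(\forcing{P}_n\)-name out of \(\HS_n\)-names that are forced into \(\dot{x}\), verify that this name lies in \(\HS_n\) with support \(\vec{H}\), and then use a symmetry argument to descend from a \(\forcing{P}\)-condition to a \(\forcing{P}_n\)-condition when checking the two names agree. The differences are cosmetic: the paper manufactures an explicit set \(C\) of representative \(\HS_n\)-names (by choosing antichains and closing under \(\Fix(\vec{H})\)) where you simply bound ranks, and the paper packages your descent step as an application of its Restriction Lemma (with \(\vec{H}\) taken to be \(\seq{\omega,\dots,\omega}\) of length \(n+1\)) rather than reproving it inline. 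One small point you glossed over: compatibility of \(\vec{\pi}q\) and \(p\) at levels \(i>n\) is not literally ``trivial'' from domain disjointness alone, since one must check that the common lower part \(q\restriction(n+1)\) forces the required total/acyclic/chain conditions for the entries of both \(p_i\) and \((\vec{\pi}q)_i\); but this follows immediately because \(q\restriction(n+1)\) extends both \(p\restriction(n+1)\) and \((\vec{\pi}q)\restriction(n+1)\), so the inductive construction of a common extension goes through.
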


\begin{proof}
Fix \( ( \dot{y} , p ) \in \dot{x} \).
As the set of \( q \) such that either \( q \forces \dot{y} \in \dot{ \mathcal{N}}_n \) or else \( q \forces \dot{y} \notin \dot{ \mathcal{N}}_n \) is dense below \( p \), there is a maximal antichain \( D_{ ( \dot{y} , p ) } \) below \( p \) and a map \( h_{ ( \dot{y} , p ) } \colon D_{ ( \dot{y} , p ) } \to \HS _n \) such that, for each \( q \in D_{ ( \dot{y} , p ) } \), either \( q \forces \dot{y} = h_{ ( \dot{y} , p ) } ( q ) \) or \( q \forces \dot{y} \notin \dot{\mathcal{N}}_n \). 
Let \( D'_{ ( \dot{y} , p ) } = \setof{ q \in D_{ ( \dot{y} , p ) } }{ q \forces \dot{y} \in \dot{\mathcal{N}}_n } \) and let
\[
C = \setofLR{ \vec{\pi} \bigl ( h_{ ( \dot{y} , p ) } ( q ) \bigr ) }{ ( \dot{y} , p ) \in \dot{x}, \ q \in D'_{ ( \dot{y} , p ) } , \ \vec{\pi} \in \Fix ( \vec{H} ) } .
\] 
Consider the following \( \forcing{P} _n \)-name:
\[
\dot{w} = \setof{ ( \dot{y} , q ) }{ \dot{y} \in C, \ q \in \forcing{P} _n \text{ and } q \forces \dot{y} \in \dot{x} }.
\]

\begin{claim}
\( \dot{w} \in \HS _n \) with support \( \vec{H} \).
\end{claim}

\begin{proof}
Let \( \vec{\pi} \in \Fix ( \vec{H} ) \) and \( ( \dot{y} , q ) \in \dot{w} \). 
By definition, \( q \forces \dot{y} \in \dot{x} \), hence \( \vec{\pi} q \forces \vec{\pi} \dot{y} \in \dot{x} \). 
Since \( \vec{\pi} \dot{y} \in C \), this means that \( ( \vec{\pi} \dot{y}, \vec{\pi} q ) \in \dot{w} \). 
Hence \( \vec{\pi} \dot{w} = \dot{w} \).
\end{proof}

Fix \( p \in \forcing{P} \) such that \( p \forces \dot{x} \subseteq \dot{\mathcal{N}}_n \).

\begin{claim}
\( p \forces \dot{w} = \dot{x} \).
\end{claim}

\begin{proof}
Let \( q \leq p \) and \( \dot{z} \in \HS \) such that \( q \forces \dot{z} \in \dot{x} \). 
By definition of \( C \) and our hypothesis on \( p \), there is an \( r \leq q \) and a \( \dot{y} \in C \) such that \( r \forces \dot{z} = \dot{y} \in \dot{x} \). 
By the Restriction Lemma, \( r \restriction n + 1 \forces \dot{y} \in \dot{x} \), hence \( ( \dot{y} , r \restriction n + 1 ) \in \dot{w} \) and, in particular, \( r \forces \dot{z} = \dot{y} \in \dot{w} \). 
By density, \( p \forces \dot{x} \subseteq \dot{w} \).

The other inclusion is immediate from the definition of \( \dot{w} \).
\end{proof}
Therefore \( p \forces \dot{x} \in \dot{\mathcal{N}}_n \).
By density, \( \forces \dot{x} \subseteq \dot{\mathcal{N}}_n \IMPLIES \dot{x} \in \dot{ \mathcal{N} }_n \).
\end{proof}

\begin{theorem}\label{th:dc}
\( \mathcal{N} \Models \DC ( A ) \).
\end{theorem}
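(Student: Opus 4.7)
The plan is to fix a nonempty total binary relation \( R \in \mathcal{N} \) on \( A \) and produce an \( R \)-chain in \( \mathcal{N} \). I would split into two cases depending on whether \( R \) admits a finite cycle \( x_0 \mathrel{R} x_1 \mathrel{R} \dots \mathrel{R} x_k \mathrel{R} x_0 \). If such a cycle exists, then concatenating copies of the finite sequence \( ( x_0 , \dots , x_k ) \) produces an \( R \)-chain in \( \mathcal{N} \) directly; this step is a single existential, so no choice is needed.

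If instead \( R \) has no cycle, then \( R \) is total and acyclic on \( A \), and the strategy is to locate \( R \) at some finite stage of the iteration and then invoke Lemma~\ref{lem:newchains}. Concretely, I would pick any \( \dot{R} \in \HS \) with \( \dot{R}_G = R \) together with some support \( \vec{H} = \seq{ H_0 , \dots , H_n } \) for \( \dot{R} \), and observe that since \( A \in \mathcal{N}_0 \) and each \( \mathcal{N}_m \) is transitive and closed under ordered pairs, \( R \subseteq A \times A \subseteq \mathcal{N}_0 \subseteq \mathcal{N}_n \). Applying Lemma~\ref{lem:modcl} to \( \dot{R} \) then gives \( R \in \mathcal{N}_n \), and Lemma~\ref{lem:newchains} produces an \( R \)-chain in \( \mathcal{N}_{ n + 1 } \subseteq \mathcal{N} \), as required.

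The main obstacle, which has already been addressed by the preceding lemmas, is pinning \( R \) down to a particular \( \mathcal{N}_n \): this is exactly the content of Lemma~\ref{lem:modcl}, coupled with the fact that symmetric names always have a finite support. The cyclic/acyclic dichotomy does the rest of the work, sidestepping any need for an auxiliary choice principle—any cycle yields a chain for free, while the acyclic case is precisely the scenario for which the iteration was designed to add witnesses, one stage at a time.
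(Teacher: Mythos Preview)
Your proposal is correct and follows essentially the same argument as the paper: both use the fact that \( R \subseteq A \times A \in \mathcal{N}_0 \) together with Lemma~\ref{lem:modcl} to place \( R \) in some \( \mathcal{N}_n \), then invoke the cyclic/acyclic dichotomy and Lemma~\ref{lem:newchains} for the acyclic case. The only cosmetic difference is that the paper first locates \( R \) in \( \mathcal{N}_n \) and then splits into cases, whereas you split first; your version is also slightly more explicit about how the index \( n \) arises from the length of a support for a name of \( R \).
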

\begin{proof}
 Since every binary relation \( R \in \mathcal{N} \) on \( A \) is a subset of \( A\times A \in \mathcal{N}_0 \), it follows from Lemma~\ref{lem:modcl} that \( R \in \mathcal{N}_n \) for some \( n \). 
 Now, either \( R \) is cyclic, but then it surely has a chain, or it is acyclic, but then Lemma~\ref{lem:newchains} says that in \( \mathcal{N}_{n+1} \subseteq \mathcal{N} \) there is a chain for this relation.
\end{proof}
This finishes the proof of Theorem \ref{th:Lorenzo}.

\section{Some complementary results}\label{sec:complementary}
We collect some facts related to our main results, and conclude with some open questions.
\subsection{Dependent Choice propagates under finite unions.}
By Proposition~\ref{prop:basicpropertiesofDC&AComega}, the axiom \( \DC (X) \) is closed under surjective images and, hence, under subsets.
The next result shows that it is also closed under finite unions.

\begin{theorem}\label{th:DCunion}
\( \DC ( X ) \wedge \DC ( Y ) \IMPLIES \DC ( X \cup Y ) \).
\end{theorem}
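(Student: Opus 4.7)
The plan is to reduce to the case of disjoint \( X \) and \( Y \) and then split on the structure of the given total relation. First, I may assume \( X \cap Y = \emptyset \): if not, replace \( Y \) with \( Y \setminus X \), a surjective image of \( Y \), so that \( \DC(Y) \IMPLIES \DC(Y \setminus X) \) by Proposition~\ref{prop:basicpropertiesofDC&AComega}\ref{prop:basicpropertiesofDC&AComega-a}. Now let \( R \) be total on \( X \cup Y \). The easy case is when there is some nonempty \( Z \subseteq X \) (or \( Z \subseteq Y \)) such that \( R \restriction Z \) is total on \( Z \): then \( Z \) is a surjective image of \( X \), so \( \DC(X) \IMPLIES \DC(Z) \) yields an \( R \)-chain inside \( Z \subseteq X \cup Y \).

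If no such subset exists in either \( X \) or \( Y \), I would establish the key subclaim that for every \( x \in X \) there is a finite \( R \)-chain from \( x \) reaching \( Y \), and symmetrically for \( Y \). Indeed, if some \( x \in X \) had all of its forward \( R \)-chains confined to \( X \), then the set of \( X \)-elements reachable from \( x \) via \( R \) would be a nonempty subset on which \( R \) is total, contradicting the case hypothesis. Using this, I define the ``return'' relation \( R_X^{*} \) on \( X \) by declaring \( x R_X^{*} x' \) iff there is a finite \( R \)-chain \( x = z_0, z_1, \dots, z_k = x' \) with \( z_1, \ldots, z_{k-1} \in Y \) (allowing \( k = 1 \), i.e.~\( x R x' \) directly). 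The subclaim yields totality of \( R_X^{*} \) on \( X \), and \( \DC(X) \) applied to \( R_X^{*} \) (via Proposition~\ref{prop:basicpropertiesofDC&AComega}\ref{prop:basicpropertiesofDC&AComega-b}) produces a sequence \( (x_n)_{n \in \omega} \) of \( X \)-elements, each consecutive pair connected by some finite \( R \)-chain through \( Y \).

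The main obstacle will be the final interpolation step: concatenating these finite \( R \)-chains into a single infinite \( R \)-chain in \( X \cup Y \). A priori this requires a countable choice from the subsets \( F_n \subseteq Y^{< \omega } \) of valid interpolations between \( x_n \) and \( x_{n+1} \), which does not follow from \( \DC(Y) \) on its own. The idea is to perform the entire construction in a single \( \DC \) invocation applied to an enriched relation that packages each ``return'' step together with a canonical interpolating \( Y \)-chain, so that the chain produced by \( \DC \) yields both the skeleton \( (x_n) \) and the interpolations simultaneously. The delicate part of executing this plan is arranging the enrichment so that the required \( \DC \) instance reduces to \( \DC(X) \) or \( \DC(Y) \) through the surjective-image machinery of Proposition~\ref{prop:basicpropertiesofDC&AComega}\ref{prop:basicpropertiesofDC&AComega-a}, without a hidden appeal to \( \AComega \) on \( Y^{< \omega } \).
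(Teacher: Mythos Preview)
Your setup matches the paper's through the construction of the ``return'' relation \(R_X^*\) (the paper calls it \(R_X\)) and the sequence \((x_n)_{n\in\omega}\). You also correctly identify the crux: selecting, for each \(n\), a finite \(R\)-path through \(Y\) from \(x_n\) to \(x_{n+1}\) looks like a countable-choice problem over \({}^{<\omega}Y\), which neither \(\DC(X)\) nor \(\DC(Y)\) provides. But your proposal stops precisely here. The suggested ``enrichment'' --- packaging each return step with its interpolating \(Y\)-path into a single relation --- would naturally live on something like \(X \times {}^{<\omega}Y\), and you give no mechanism for reducing \(\DC\) on such a set back to \(\DC(X)\) or \(\DC(Y)\) via surjective images. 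As stated, this is a restatement of the obstacle, not a resolution.

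The paper's way around this is quite different from an enrichment and depends on an ingredient you omit: first reduce to the case where \(R\) is \emph{acyclic} (a cycle already yields a chain). With acyclicity, the sets
\[
Y_k = \bigcup\bigl\{\ran(s) \Mid s\in{}^{<\omega}Y \text{ is an } R\text{-path from } x_{n_k} \text{ to } x_{n_k+1}\bigr\}
\]
(where \((n_k)_k\) enumerates the positions at which the \(R_X\)-chain does not come from a direct \(R\)-edge) are pairwise \emph{disjoint}: any overlap would produce an \(R\)-cycle. Now define a relation \(R_Y\) on \(\bigcup_k Y_k\) that restricts to \(R\) inside each \(Y_k\) and links the ``exit'' points of \(Y_k\) to the ``entry'' points of \(Y_{k+1}\). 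This \(R_Y\) is total, so a \emph{single} application of \(\DC(Y)\) gives one chain \((y_m)_{m\in\omega}\); the case hypothesis (no total subrelation of \(R\restriction Y\)) forces this chain to spend only finitely many steps in each \(Y_k\) and then advance to \(Y_{k+1}\). Splicing the finite blocks of \((y_m)\) between the appropriate \(x\)'s yields the desired \(R\)-chain. The disjointness of the \(Y_k\)'s --- unavailable without the acyclicity reduction --- is what converts infinitely many separate choice problems into one \(\DC(Y)\) instance.
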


\begin{corollary}\label{cor:DCunion}
\( \DC ( X ) \IMPLIES \DC ( X \times n ) \), for all sets \( X \) and all \( n \in \omega \).
\end{corollary}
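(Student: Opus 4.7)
The plan is to prove Corollary~\ref{cor:DCunion} by a straightforward induction on \( n \in \omega \), using Theorem~\ref{th:DCunion} at the induction step. The base cases are essentially trivial: for \( n = 0 \), we have \( X \times 0 = \emptyset \), and \( \DC ( \emptyset ) \) holds vacuously since there are no nonempty total relations on \( \emptyset \); for \( n = 1 \), the bijection \( X \times 1 \approx X \) together with Proposition~\ref{prop:basicpropertiesofDC&AComega}\ref{prop:basicpropertiesofDC&AComega-a} (invariance of \( \DC \) under surjective images, hence under bijections) gives \( \DC ( X \times 1 ) \) from \( \DC ( X ) \).

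For the induction step, assume \( \DC ( X \times n ) \) holds and write
\[
X \times ( n + 1 ) = ( X \times n ) \cup ( X \times \set{n}).
\]
The set \( X \times \set{n} \) is in bijection with \( X \), so \( \DC ( X \times \set{n} ) \) follows from \( \DC ( X ) \) by Proposition~\ref{prop:basicpropertiesofDC&AComega}\ref{prop:basicpropertiesofDC&AComega-a}. Combining this with the inductive hypothesis \( \DC ( X \times n ) \) and invoking Theorem~\ref{th:DCunion} yields \( \DC ( X \times ( n + 1 ) ) \), closing the induction.

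There is no real obstacle here: all the work has already been done in Theorem~\ref{th:DCunion}, and the corollary is just the observation that \( X \times n \) is a finite disjoint union of copies of \( X \), so a routine induction suffices. The only minor point to keep straight is that one must use Proposition~\ref{prop:basicpropertiesofDC&AComega}\ref{prop:basicpropertiesofDC&AComega-a} to transfer \( \DC ( X ) \) across the bijection \( X \approx X \times \set{n} \) before applying the union theorem.
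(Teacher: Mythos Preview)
Your proof is correct and is exactly the intended derivation: the paper states the corollary without proof, since it follows immediately from Theorem~\ref{th:DCunion} by the obvious induction on \( n \) that you spell out.
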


The natural progression from Corollary~\ref{cor:DCunion} would be to prove that \( \DC ( X ) \IMPLIES \DC ( X \times \omega ) \), but this cannot be established in \( \ZF \), since \( \DC ( X \times \omega ) \) implies \( \AComega ( X ) \) (part~\ref{prop:basicpropertiesofDC&AComega-e} of Proposition~\ref{prop:basicpropertiesofDC&AComega}) and we know from Theorem~\ref{th:Lorenzo} that \( \DC ( X ) \) does not necessarily imply \( \AComega ( X ) \).

If a binary relation \( R \) is such that \( \ran ( R ) \subseteq \dom ( R ) \), then it is total on its domain.
The largest \( R' \subseteq R \) such that \( \ran ( R' ) \subseteq \dom ( R' ) \) is
\[
\mathcal{D} ( R ) = \bigcup \setofLR{S \subseteq R}{ \ran ( S ) \subseteq \dom ( S )} .
\] 
By part~\ref{prop:basicpropertiesofDC&AComega-a} of Proposition~\ref{prop:basicpropertiesofDC&AComega} it is easy to see that
\begin{equation}\label{eq:DCdependable}
\DC ( X ) \IFF \FORALL{ R \subseteq X^2 } ( \mathcal{D} ( R ) \neq \emptyset \IMPLIES \text{there is a \( \mathcal{D} ( R ) \)-chain} ) . 
\end{equation}

\begin{proof}[Proof of Theorem~\ref{th:DCunion}]
Suppose \( \DC ( X ) \) and \( \DC ( Y ) \), and let \( R \subseteq ( X \cup Y )^2 \) be total, towards proving that there is an \( R \)-chain.
Without loss of generality, we may assume that \( X \) and \( Y \) are nonempty and disjoint.
If \( \mathcal{D} ( R \restriction X ) \neq \emptyset \), then by \( \DC ( X ) \) and ~\eqref{eq:DCdependable} there is a \( \mathcal{D} ( R \restriction X ) \)-chain, which is, in particular an \( R \)-chain.
Similarly, if \( \mathcal{D} ( R \restriction Y ) \neq \emptyset \), then there is an \( R \)-chain.
Therefore, without loss of generality, we may assume that \( R \) is acyclic, and that 
\begin{equation}\label{eq:th:DCunion0}
\mathcal{D} ( R \restriction X ) = \mathcal{D} ( R \restriction Y ) = \emptyset . 
\end{equation}

Recall that \( R^+ \) is the smallest transitive relation containing \( R \).
If \( x \in X \cup Y \) and \( R^+ ( x ) \subseteq X \), then \( R \restriction R^+ ( x ) \) would witness that \( \mathcal{D} ( R \restriction X ) \neq \emptyset \), against~\eqref{eq:th:DCunion0}. Similarly \( R^+ ( x ) \) cannot be included in \( Y \).
Therefore 
\begin{equation}\label{eq:th:DCunion1}
 \FORALL{x \in X \cup Y} ( R^+ ( x ) \nsubseteq X \wedge R^+ ( x ) \nsubseteq Y ).
\end{equation}

Here is the idea of the proof.
By~\eqref{eq:th:DCunion0}, any \( R \)-chain \( ( z_n )_{ n \in \omega } \) must visit both \( X \) and \( Y \) infinitely often, so \( ( z_n )_{ n \in \omega } \) can be seen as the careful merging of two sequences \( ( x_n )_{ n \in \omega } \) in \( X \) and \( ( y_n )_{ n \in \omega } \) in \( Y \).
The sequence \( ( x_n )_{ n \in \omega } \) is obtained by applying \( \DC ( X ) \) to a total relation \( R_X \) on \( X \) such that \( R \restriction X \subseteq R_X \subseteq R^+ \).
Using \( ( x_n )_{ n \in \omega } \), a suitable total relation \( R_Y \) on some \( Y' \subseteq Y \) is defined, and by \( \DC ( Y ) \) the required sequence \( ( y_n )_{ n \in \omega } \) is obtained.
Here come the details.

Let \( R_X \) be the relation on \( X \) given by \( R \restriction X \), together with all pairs \( ( x , x' ) \) such that \( x \mathrel{R} y_0 \mathrel{R} y_1 \mathrel{R} \cdots \mathrel{R} y_n \mathrel{R} x' \) for some finite sequence of elements of \( Y \):
\begin{multline*}
R_X = ( R \restriction X ) \cup \{ ( x , x' ) \in X^2 \Mid \EXISTS{m \geq 1 } \EXISTS{s \in {}^{m } Y}
\\
 ( x \mathrel{R} s ( 0 ) \wedge s ( m - 1 ) \mathrel{R} x' \wedge \FORALL{i < m - 1 } ( s ( i ) \mathrel{R} s ( i + 1 ) ) ) \} .
\end{multline*}
It is immediate that \( R_X \subseteq R^+ \).

\begin{claim}
\( R_X \) is total on \( X \).
\end{claim}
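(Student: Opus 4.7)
The plan is to argue by contradiction: assume that $R_X$ is not total on $X$, fix an $x \in X$ with $R_X(x) = \emptyset$, and derive a violation of $\mathcal{D}(R \restriction Y) = \emptyset$ given by~\eqref{eq:th:DCunion0}.

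The key observation is that $R_X(x) = \emptyset$ forces a self-sustaining subsystem of $R$ to live inside $Y$. Indeed, since $R \restriction X \subseteq R_X$, no $R$-successor of $x$ lies in $X$; by totality of $R$, the set $R(x)$ is therefore a nonempty subset of $Y$. More generally, the very definition of $R_X$ ensures that whenever some $y \in Y$ is reachable from $x$ by a path $x \mathrel{R} s(0) \mathrel{R} s(1) \mathrel{R} \cdots \mathrel{R} s(m-1) = y$ with all $s(i) \in Y$, no element of $R(y)$ can lie in $X$ (otherwise such an element would witness $(x, x') \in R_X$), and hence $R(y)$ is a nonempty subset of $Y$.

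Let $S$ be the set of all $y \in Y$ so reachable from $x$ through $Y$. Then $\emptyset \neq R(x) \subseteq S$, and, by the observation above, every $y \in S$ satisfies $\emptyset \neq R(y) \subseteq Y$; moreover, extending any witness path for $y$ by an element of $R(y)$ shows $R(y) \subseteq S$ as well. Consequently $R \restriction S$ is a nonempty subrelation of $R \restriction Y$ with $\ran(R \restriction S) \subseteq S = \dom(R \restriction S)$, so $\emptyset \neq R \restriction S \subseteq \mathcal{D}(R \restriction Y)$, contradicting~\eqref{eq:th:DCunion0}.

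The only mild obstacle is the notational bookkeeping needed to unwind the definition of $R_X$ and verify that the explicitly defined set $S$ is closed under $R$; once that is in hand, the contradiction is immediate, and no choice is used anywhere, since $S$ is defined outright from $x$, $R$, $X$, and $Y$.
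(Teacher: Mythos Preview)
Your argument is correct. It is essentially the paper's proof repackaged as a contradiction: where the paper cites the already-established~\eqref{eq:th:DCunion1} to find, for each $x\in X$, a finite $R$-path through $Y$ back into $X$, you instead reprove that fact in situ by exhibiting the self-sustaining set $S\subseteq Y$ that would otherwise violate~\eqref{eq:th:DCunion0}.
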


\begin{proof}
We must show that \( \dom ( R_X ) = X \).
Let \( x \in X \). 
If \( R ( x ) \cap X \neq \emptyset \), then \( x \in \dom ( R \restriction X) \subseteq \dom (R_X) \).

Now suppose otherwise. 
By~\eqref{eq:th:DCunion1} \( R^+ ( x ) \nsubseteq Y \), so there are \( y_0 , \dots , y_n \in Y \) and \( x' \in X \) such that \( x \mathrel{R} y_0 \mathrel{R} \dots \mathrel{R} y_n \mathrel{R} x' \).
Thus \( ( x , x' ) \in R_X \), so \( x \in \dom ( R_X ) \).
\end{proof}

By \( \DC ( X ) \) there is an \( R_X \)-chain \( ( x_n )_{ n \in \omega } \).

\begin{claim}
\( \FORALL{n} \EXISTS{ m > n } \neg ( x_{m} \mathrel{R} x_{m+1} ) \).
\end{claim}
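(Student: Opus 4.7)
The plan is to derive a contradiction from the assumption that the claim fails. Suppose, towards a contradiction, that there is an \( n \) such that \( x_m \mathrel{R} x_{m+1} \) holds for every \( m > n \). Since each \( x_m \) lies in \( X \), every pair \( ( x_m , x_{m+1} ) \) with \( m > n \) belongs to \( R \restriction X \).

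Then I would form the set
\[
S = \setof{ ( x_m , x_{m+1} ) }{ m > n } \subseteq R \restriction X.
\]
This set is nonempty (witness \( m = n + 1 \)), and by construction \( \dom ( S ) = \setof{ x_m }{ m \geq n + 1 } \) while \( \ran ( S ) = \setof{ x_m }{ m \geq n + 2 } \), so \( \ran ( S ) \subseteq \dom ( S ) \). Hence \( S \) is a nonempty subset of \( R \restriction X \) enjoying the defining property of \( \mathcal{D} \), so \( \mathcal{D} ( R \restriction X ) \neq \emptyset \), contradicting~\eqref{eq:th:DCunion0}.

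The argument is essentially just an unpacking of the definition of \( \mathcal{D} \) together with the reduction step~\eqref{eq:th:DCunion0}, so no real obstacle arises. The only thing to be vigilant about is that the members of \( S \) really do lie in \( R \restriction X \) (which requires both endpoints to be in \( X \), and that is automatic since the entire chain \( ( x_n )_{ n \in \omega } \) lives in \( X \)).
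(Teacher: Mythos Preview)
Your argument is correct and follows essentially the same route as the paper: assume the chain is eventually an \( R \)-chain inside \( X \), and observe that this tail witnesses \( \mathcal{D} ( R \restriction X ) \neq \emptyset \), contradicting~\eqref{eq:th:DCunion0}. The only cosmetic difference is that the paper phrases the witness as \( R \restriction \setof{ x_m }{ m \geq \bar{n} } \) being total on its field, whereas you explicitly exhibit the set \( S \) of consecutive pairs and check \( \ran ( S ) \subseteq \dom ( S ) \); these amount to the same thing.
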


\begin{proof}
Towards a contradiction, suppose that there is \( \bar{n}\in \omega \) such that \( x_{m} \mathrel{R} x_{ m + 1 } \) for every \( m \geq \bar{n} \).
Then \( R \restriction \setof{ x_m}{ m \geq \bar{n}} \) is total on \( \setof{ x_m}{ m \geq \bar{n}} \) and contained in \( R \restriction X \), against~\eqref{eq:th:DCunion0}.
\end{proof}

Let \( ( n_k )_{ k \in \omega } \) be the sequence enumerating the set of \( m \)s such that \( \neg ( x_{m} \mathrel{R} x_{ m + 1 } ) \).
By the definition of \( R_X \), each \( x_{n_k} \) is linked to \( x_{ n_k + 1} \) via \( R \) through some finite path in \( Y \), and let \( Y_k \) be the collection of all places visited by these paths:
\begin{multline*}
Y_k \coloneqq \bigcup \bigl \{ \ran ( s ) \Mid \EXISTS{m } \bigl ( s \in {}^{m+ 1} Y \wedge x_{n_k} \mathrel{R} s ( 0 ) \wedge s ( m ) \mathrel{R} x_{ n_k + 1 } \wedge{}
\\
 \FORALL{i < m } ( s ( i ) \mathrel{R} s ( i + 1 ) ) \bigr ) \bigr \} . 
\end{multline*}

\begin{claim}
The \( Y_k \)s are nonempty, pairwise disjoint subsets of \( Y \).
\end{claim}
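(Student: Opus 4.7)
The plan is to verify nonemptiness and pairwise disjointness separately, using only the definition of $R_X$, the construction of the sequence $(x_n)_{n \in \omega}$, and the acyclicity of $R$.

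For nonemptiness, fix $k$. By choice of $n_k$ we have $\neg(x_{n_k} \mathrel{R} x_{n_k+1})$, yet $(x_{n_k}, x_{n_k+1}) \in R_X$ because $(x_n)_{n \in \omega}$ is an $R_X$-chain. By the definition of $R_X$, this forces the existence of an $m$ and a witness sequence $s \in {}^{m+1} Y$ with $x_{n_k} \mathrel{R} s(0) \mathrel{R} s(1) \mathrel{R} \cdots \mathrel{R} s(m) \mathrel{R} x_{n_k+1}$. The range of any such $s$ is a nonempty subset of $Y$ contained in $Y_k$, so $Y_k \neq \emptyset$.

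For pairwise disjointness, suppose toward a contradiction that $k < l$ and $y \in Y_k \cap Y_l$. From $y \in Y_k$ we obtain a finite $R$-path witnessing $y \mathrel{R^+} x_{n_k+1}$ (the tail of some witnessing sequence for $x_{n_k} \mathrel{R_X} x_{n_k+1}$ through $y$). From $y \in Y_l$ we similarly obtain $x_{n_l} \mathrel{R^+} y$ (the initial segment of a witnessing sequence for $x_{n_l} \mathrel{R_X} x_{n_l+1}$ that reaches $y$). Since $(x_n)_{n \in \omega}$ is an $R_X$-chain and $n_k + 1 \le n_l$, repeated application of $R_X \subseteq R^+$ combined with the transitivity of $R^+$ gives $x_{n_k+1} \mathrel{R^+} x_{n_l}$ (using $x_{n_k+1} = x_{n_l}$ if $n_k+1 = n_l$, handled directly). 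Concatenating these three $R^+$-relations yields $y \mathrel{R^+} y$, contradicting the assumption that $R$ is acyclic.

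The only subtle point is the edge case $n_k + 1 = n_l$, where one of the intermediate $R^+$ links degenerates; this is handled by simply chaining $y \mathrel{R^+} x_{n_l} \mathrel{R^+} y$ directly. Beyond that, the argument is a direct unpacking of definitions, and I expect no real obstacle: the crux is simply noting that each $Y_k$ lives strictly between the consecutive $R_X$-chain points $x_{n_k}$ and $x_{n_k+1}$, so two distinct $Y_k$'s cannot share a point without creating an $R^+$-cycle.
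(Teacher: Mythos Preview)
Your proof is correct and follows essentially the same approach as the paper: nonemptiness comes from $(x_{n_k},x_{n_k+1})\in R_X\setminus R$ forcing a witnessing $Y$-path, and disjointness comes from observing that a common point $y\in Y_k\cap Y_l$ would yield $y\mathrel{R^+}x_{n_k+1}\mathrel{R^+}x_{n_l}\mathrel{R^+}y$, contradicting acyclicity. Your treatment of the degenerate case $n_k+1=n_l$ matches the paper's handling of $x_{n_k+1}=x_{n_j}$.
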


\begin{proof}
For each \( k \) we have \( ( x_{ n_k } , x_{ n_k + 1 } ) \in R_X \setminus R \). 
This means that there is some \( \seq{ y_0 , \dots , y_m } \in {}^{<\omega} Y \) such that \( x_{n_k} \mathrel{R} y_0 \mathrel{R} \dots \mathrel{R} y_m \mathrel{R} x_{n_k + 1 } \). 
In particular, \( Y_k \neq \emptyset \).

Towards a contradiction, suppose there are indices \( k < j \) such that \( Y_k \cap Y_j \neq \emptyset \). 
Pick \( y \in Y_k \cap Y_j \).
Then \( y \mathrel{R^+} x_{ n_k + 1 } \mathrel{R^+} x_{n_j} \mathrel{R^+} y \), if \( x_{ n_k + 1 } \neq x_{n_j} \), or \( y \mathrel{R^+} x_{ n_k + 1 } = x_{n_j} \mathrel{R^+} y \) otherwise.
Either way, this contradicts our assumption that \( R \) is acyclic.
\end{proof}

Now we let \( R_Y \) be the relation on \( \bigcup_{k \in \omega} Y_k \)
\[
\bigcup_{ k \in \omega } ( R \restriction Y_k ) \cup \bigcup_{ k \in \omega } \setof{ ( y , y' ) \in Y_k \times Y_{ k + 1 }} { y \mathrel{R} x_{ n_k + 1 } \text{ and } x_{ n_{ k + 1 } } \mathrel{R} y' }.
\]
It readily follows from the definition that \( R_Y \subseteq R^+ \).

\begin{claim}
\( R_Y \) is total on \( \bigcup_{k \in \omega} Y_k \).
\end{claim}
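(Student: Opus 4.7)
The plan is to do a case analysis on the position of a given element $y$ within a finite path witnessing its membership in some $Y_k$.

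First, fix $y \in \bigcup_k Y_k$. Since the $Y_k$s are pairwise disjoint (by the previous claim), $y$ belongs to a unique $Y_k$, so by the definition of $Y_k$ there is a finite sequence $\seq{y_0, \dots, y_m}$ in $Y$ with
\[
x_{n_k} \mathrel{R} y_0 \mathrel{R} y_1 \mathrel{R} \cdots \mathrel{R} y_m \mathrel{R} x_{n_k + 1},
\]
and $y = y_i$ for some $i \leq m$.

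If $i < m$, then $y_{i+1}$ also lies in the range of this path and hence in $Y_k$, and $y \mathrel{R} y_{i+1}$. So $(y, y_{i+1}) \in R \restriction Y_k \subseteq R_Y$, supplying the desired $R_Y$-successor of $y$. If $i = m$, then $y \mathrel{R} x_{n_k + 1}$, and I need to produce a bridge step: some $y' \in Y_{k+1}$ with $x_{n_{k+1}} \mathrel{R} y'$. Since $(x_{n_{k+1}}, x_{n_{k+1}+1}) \in R_X \setminus R$ by the definition of the sequence $(n_j)_{j \in \omega}$, there is some finite path $x_{n_{k+1}} \mathrel{R} s(0) \mathrel{R} \cdots \mathrel{R} s(m') \mathrel{R} x_{n_{k+1}+1}$ in $Y$; taking $y' = s(0) \in Y_{k+1}$ we have simultaneously $y \mathrel{R} x_{n_k+1}$ and $x_{n_{k+1}} \mathrel{R} y'$, so $(y, y') \in R_Y$.

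Since this is essentially a direct unpacking of the definitions of $Y_k$ and $R_Y$, I do not foresee a real obstacle. The only delicate point is that in the boundary case $i = m$ one must \emph{not} choose a specific successor, merely assert existence: nonemptiness of $Y_{k+1}$ (already proved) automatically yields the required $y'$ by taking the first term of any witnessing path, without any appeal to choice.
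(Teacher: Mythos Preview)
Your proof is correct and follows essentially the same approach as the paper's: pick a witnessing path for $y \in Y_k$ and do a case split on whether $y$ is the last element of that path, using the second clause of $R_Y$ in the boundary case. The paper's version is terser (it simply asserts the existence of a suitable $y' \in Y_{k+1}$), whereas you spell out why such a $y'$ exists; both arguments are the same in substance.
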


\begin{proof}
Pick \( k \in \omega \) and \( y \in Y_k \), towards proving that \( y \in \dom ( R_Y ) \). 
Then there is a finite sequence \( \seq{ y_0 , \dots , y_m } \) of elements of \( Y_k \) such that \( x_{n_k} \mathrel{R} y_0 \mathrel{R} \cdots \mathrel{R} y_m \mathrel{R} x_{n_k + 1} \), and \( y = y_i \) for some \( 0 \leq i \leq m \).
If \( i < m \), then \( y \mathrel{R} y_{i + 1} \). 
If \( i = m \) then \( y \mathrel{R_Y} y' \) for any \( y' \in Y_{ k + 1} \) such that \( x_{n_{k+1}} \mathrel{R} y' \).
In either case \( y \in \dom ( R_Y ) \). 
\end{proof}

By \( \DC ( Y ) \), there is an \( R_Y \)-chain \( ( y_n )_{ n \in \omega} \). 
By part~\ref{prop:basicpropertiesofDC&AComega-b} of Proposition~\ref{prop:basicpropertiesofDC&AComega} we can suppose that \( y_0 \in Y_0 \) and that \( x_{n_0} \mathrel{R} y_0 \). 
As the \( Y_k \)s are disjoint, for every \( n \) there is a unique \( k \) such that \( y_n \in Y_k \), and let \( i ( n ) \) be this \( k \). 

\begin{claim}
The set \( I_k = \setof{ n \in \omega }{ i ( n ) = k } \) is a finite interval of natural numbers.
\end{claim}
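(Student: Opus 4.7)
The plan is to split the claim into two parts: show that $I_k$ is an interval, and separately that $I_k$ is finite.

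For the interval part, I would first observe that the function $i \colon \omega \to \omega$ satisfies $i ( n + 1 ) - i ( n ) \in \set{ 0 , 1 }$. This is immediate from the definition of \( R_Y \): inside a single \( Y_k \) the relation reduces to \( R \restriction Y_k \) (so consecutive terms of the chain force \( i \) to stay constant), while the only cross--set edges go from \( Y_k \) to \( Y_{ k + 1 } \) (bumping \( i \) up by exactly \( 1 \)). In particular \( i \) is non-decreasing, so each preimage \( I_k \) is an interval (possibly empty, possibly a final tail of \( \omega \)).

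For finiteness I would argue by contradiction. Assume \( I_k \) is infinite; by the interval property together with the non-decreasingness of \( i \), this forces \( I_k = \setof{ n }{ n \geq m } \) for some \( m \). For every such \( n \), the pair \( ( y_n , y_{ n + 1 } ) \) lies in \( Y_k \times Y_k \), so the statement \( y_n \mathrel{ R_Y } y_{ n + 1 } \) unwinds to \( y_n \mathrel{ R } y_{ n + 1 } \) by the definition of \( R_Y \restriction Y_k \). Hence \( ( y_n )_{ n \geq m } \) is an infinite \( R \)-chain sitting inside \( Y_k \subseteq Y \).

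Finally, I would extract a contradiction with~\eqref{eq:th:DCunion0}. Setting \( S = \setof{ ( y_n , y_{ n + 1 } ) }{ n \geq m } \), one gets a non-empty subrelation of \( R \restriction Y \) with \( \ran ( S ) \subseteq \dom ( S ) \), so \( S \in \mathcal{D} ( R \restriction Y ) \), contradicting the standing assumption that \( \mathcal{D} ( R \restriction Y ) = \emptyset \). The only subtle step is recalling that~\eqref{eq:th:DCunion0} is precisely what forbids an infinite \( R \)-loop inside \( Y \); once that is in hand the rest is a mechanical unpacking of how \( R_Y \) behaves on each \( Y_k \) and across the transitions.
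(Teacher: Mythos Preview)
Your argument is correct and follows essentially the same route as the paper's proof: both first observe from the definition of \( R_Y \) that \( i ( n + 1 ) \in \set{ i ( n ) , i ( n ) + 1 } \), giving the interval property, and then argue by contradiction that an infinite \( I_k \) would produce a tail of the \( R_Y \)-chain that is actually an \( R \)-chain inside \( Y \), contradicting~\eqref{eq:th:DCunion0}. One small notational slip: you write \( S \in \mathcal{D} ( R \restriction Y ) \), but \( \mathcal{D} ( R \restriction Y ) \) is itself a relation (the \emph{union} of all such \( S \)), so what you mean is \( S \subseteq \mathcal{D} ( R \restriction Y ) \), whence \( \mathcal{D} ( R \restriction Y ) \neq \emptyset \).
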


\begin{proof}
By definition of \( R_Y \) it follows that either \( i ( n + 1 ) = i ( n ) \) or else \( i ( n + 1 ) = i ( n ) + 1 \), so it is enough to show that \( I_k \) is finite.
Towards a contradiction, suppose \( I_{\bar{k}} \) is infinite, for some \( \bar{k} \in \omega \).
This means that there is \( \bar{n} \) such that \( i ( n ) = i ( \bar{n} ) \) for all \( n \geq \bar{n} \), that is \( \setof{ y_n }{ n \geq \bar{n} } \subseteq Y_{\bar{k}} \).
But then \( R \restriction \setof{ y_n }{ n \geq \bar{n} } \) would be a total on \( \setof{ y_n }{ n \geq \bar{n} } \) and contained in \( R \restriction Y \), against~\eqref{eq:th:DCunion0}.
\end{proof}

Let \( m_k = \max ( I_k ) \) so that \( I_0 = [ 0 ; m_0 ] \) and \( I_{k + 1} = [ m_k + 1 ; m_{ k+1} ] \).
Then
\begin{multline*}
\seq{x_0 , \dots , x_{ n_0 } } \conc \seq{ y_0 , \dots , y_{ m_0 } } \conc \seq{ x_{ n_0 + 1 } , \dots , x_{ n_1 } }\conc \seq{ y_{ m_0 + 1 } , \dots , y_{ m_1 } } \conc \cdots
\\
\cdots \conc \seq{ x_{ n_k + 1 } , \dots , x_{ n_{ k + 1 } } }\conc \seq{ y_{ m_k + 1 } , \dots , y_{ m_{ k + 1 } } } \conc \cdots
\end{multline*}
is the required \( R \)-chain.
\end{proof}

\subsection{The Feferman-Levy model}\label{subsec:FefermanLevy}
Feferman and Levy showed that the following is consistent relative to \( \ZF \):
\begin{equation*}
 \R \text{ is the countable union of countable sets.} \tag{\( \FL \)}
\end{equation*}
(See~\cite{Jech:1973gf}*{p.\ 142} for an exposition of the Feferman-Levy model.)

The next result shows that in the Feferman-Levy model, the statement of Theorem~\ref{th:Lorenzo} fails, that is, there is no set \( A \subseteq \R \) such that \( \DC ( A ) \) and \( \neg \AComega ( A ) \).

\begin{proposition}\label{prop:FefermanLevy}
\( \FL \) implies that if \( \DC ( A ) \) holds with \( A \subseteq \R \), then \( A \) is countable.
\end{proposition}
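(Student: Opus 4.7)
The plan is to argue by contradiction: assume \( A \subseteq \R \) is uncountable and \( \DC ( A ) \) holds, and derive a contradiction from \( \FL \). Using \( \FL \), fix a decomposition \( \R = \bigcup_{n \in \omega} R_n \) with each \( R_n \) countable, and set \( \bar R_n \coloneqq \bigcup_{k \leq n} R_k \). A finite union of countable sets is countable in \( \ZF \), so each \( \bar R_n \) is countable, and hence so is \( A_n \coloneqq A \cap \bar R_n \). Define \( \rho \colon A \to \omega \) by \( \rho ( a ) \coloneqq \min \setof{ n }{ a \in \bar R_n } \). Since \( A = \bigcup_n A_n \), each \( A_n \) is countable, and \( A \) is uncountable, the range of \( \rho \) must be unbounded in \( \omega \).

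I would then consider the relation \( R \subseteq A \times A \) given by \( a \mathrel{R} b \IFF \rho ( a ) < \rho ( b ) \). Unboundedness of \( \ran ( \rho ) \) makes \( R \) total on \( A \), so \( \DC ( A ) \) produces an \( R \)-chain \( ( a_n )_{n \in \omega} \). By construction \( \rho ( a_0 ) < \rho ( a_1 ) < \dots \) is strictly increasing, so \( \rho ( a_n ) \to \infty \); in particular \( \setof{ a_n }{ n \in \omega } \) is not contained in any single \( \bar R_N \).

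The main obstacle is to extract a contradiction from the existence of this sequence. My plan is to invoke the structural property, intrinsic to the Feferman--Levy symmetric extension, that every \( \omega \)-sequence of reals in the model has range contained in some \( R_N \): a hereditarily symmetric name for a function \( \omega \to \R \) has support bounded by some ground-model cardinal \( \aleph_n \), so its realization lies in the corresponding intermediate extension, placing its range inside \( R_n \). Applied to \( ( a_n ) \) this yields \( \setof{ a_n }{ n \in \omega } \subseteq R_N \) for some \( N \), contradicting \( \rho ( a_n ) \to \infty \). Thus the assumption that \( A \) is uncountable is untenable, and \( A \) must be countable.
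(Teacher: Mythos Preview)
Your overall strategy---use \( \DC(A) \) to produce a sequence hitting infinitely many pieces of a countable decomposition of \( \R \), then derive a contradiction---matches the paper's. The gap is in your final step. The proposition is stated as a consequence of the axiom \( \FL \) (``\( \R \) is a countable union of countable sets''), so the proof must work in \emph{any} model of \( \FL \), from \emph{any} decomposition \( (R_n)_{n\in\omega} \) witnessing it. Your step~7 instead appeals to a forcing-theoretic fact about supports of hereditarily symmetric names in one particular construction. That is not a deduction from \( \FL \); and even in the Feferman--Levy model itself, the decomposition you fixed in step~2 was arbitrary, with no connection to the ground-model \( \aleph_n \)'s, so there is no reason the ``trapping'' property should hold for it.

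The paper closes exactly this gap with a preparatory lemma: starting from any decomposition \( \R = \bigcup_n Y_n \) witnessing \( \FL \), it uses a bijection \( \pi\colon \R \to \R^\omega \) to replace each \( Y_n \) by its closure under the coordinate maps \( \pi_m \) (still countable), then disjointifies. For the resulting \( (X_n)_{n\in\omega} \) one can prove in \( \ZF \) that no \( \omega \)-sequence of reals meets infinitely many \( X_n \)'s: if \( (x_n)_{n\in\omega} \) were such a sequence, the single real \( x = \pi^{-1}((x_n)) \) lies in some \( Y_k \), and closure under the \( \pi_m \) forces every \( x_n = \pi_n(x) \) into \( Y_k \subseteq X_0\cup\dots\cup X_k \). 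With this lemma in hand, your argument (or the paper's slight variant via Proposition~\ref{prop:basicpropertiesofDC&AComega}\ref{prop:basicpropertiesofDC&AComega-d}) goes through.
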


We need a preliminary result.

\begin{lemma}\label{lem:FefermanLevy}
Assume \( \FL \).
Then there is a sequence of nonempty, countable, pairwise disjoint sets \( ( X_n )_{ n \in \omega } \) such that \( \R = \bigcup_{n} X_n \), and no infinite subsequence of \( ( X_n)_{ n \in \omega } \) has a choice function.
\end{lemma}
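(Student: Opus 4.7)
The plan is to leverage the specific stratified structure of the Feferman-Levy symmetric model rather than working from $\FL$ in the abstract. Recall that in this model $\R$ appears as an increasing union $\R = \bigcup_{n \in \omega} Y_n$, where $Y_n$ is the set of reals in the intermediate model obtained by restricting the Levy-collapse generic to its first $n$ coordinates. Each $Y_n$ is countable in the Feferman-Levy model (since $\aleph_0, \ldots, \aleph_n$ have been collapsed to $\omega$), the $Y_n$'s form an increasing chain, and the sequence $(Y_n)_{n \in \omega}$ witnesses \( \FL \). Moreover, this particular sequence enjoys an \emph{absorption property}: any countable sequence of reals $g \colon \omega \to \R$ belonging to the Feferman-Levy model already satisfies $\ran (g) \subseteq Y_N$ for some $N \in \omega$, because a hereditarily symmetric name for $g$ has finite support and that support bounds which Levy-collapse coordinates are relevant.

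First I would disjointify by setting $X_n = Y_n \setminus Y_{n-1}$ (with $Y_{-1} = \emptyset$). These $X_n$ are countable, pairwise disjoint, and $\bigcup_n X_n = \R$. Each $X_n$ is nonempty because each Levy-collapse factor adds fresh reals to the previous intermediate model, so no re-indexing is needed. (Should the reader prefer to start from an arbitrary $\FL$-witness, one may first replace $Y_n$ by the finite union $Y_0 \cup \cdots \cup Y_n$ to ensure the chain is increasing, using only that $\omega$ is well-orderable, and drop any empty difference terms.)

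The key step is the absence of a choice function on any infinite subsequence. Assume for contradiction that $(n_k)_{k \in \omega}$ is a strictly increasing sequence of indices and $g \colon \omega \to \R$ satisfies $g(k) \in X_{n_k}$ for all $k$. Then $\ran (g)$ is a countable set of reals lying in the Feferman-Levy model, so by the absorption property $\ran (g) \subseteq Y_N$ for some $N \in \omega$. But by the disjoint-difference construction, $X_{n_k} \cap Y_N = \emptyset$ as soon as $n_k > N$, which holds for all but finitely many $k$; this contradicts $g(k) \in X_{n_k}$. The main obstacle is invoking the absorption property with care, since it is not a consequence of \( \FL \) alone but rather of the symmetric-name calculus of the Feferman-Levy construction; everything else is just bookkeeping of the disjointification.
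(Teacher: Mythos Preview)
Your argument establishes the conclusion in the Feferman-Levy model, but the lemma is stated as a consequence of $\FL$ alone—that is, as a theorem of $\ZF + \FL$. You acknowledge this yourself: the absorption property you invoke is a feature of the symmetric-name calculus, not of the hypothesis $\FL$. So as written, your proposal does not prove the lemma; it proves a weaker statement (that the conclusion holds in one particular model of $\FL$), and this weakening would propagate to Proposition~\ref{prop:FefermanLevy}, which is likewise phrased as a theorem of $\ZF + \FL$ rather than as a fact about one model.

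The paper's proof closes exactly this gap by manufacturing the absorption property from $\FL$ alone, via a coding trick. Fix a bijection $\pi \colon \R \to \R^\omega$ and let $\pi_m(x)$ be the $m$-th coordinate of $\pi(x)$. Starting from any $\FL$-witness $(Y_n)$, one first closes each $Y_n$ under all the maps $\pi_m$; the closure of a countable set under these countably many functions is still countable, and no choice is needed since an explicit surjection from ${}^{<\omega}\omega \times \omega$ is written down. After disjointifying, suppose some infinite subsequence had a choice function, yielding a sequence $(x_n)_{n \in \omega}$ meeting infinitely many $X_n$'s. Let $x = \pi^{-1}\bigl((x_n)_n\bigr)$; then $x$ lies in some $Y_k$, and since $Y_k$ is closed under every $\pi_m$, each $x_n = \pi_n(x)$ lies in $Y_k \subseteq X_0 \cup \cdots \cup X_k$ as well—a contradiction. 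Thus the bijection $\R \approx \R^\omega$ plays, internally to the theory, the role that your finite-support argument plays in the metatheory.
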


\begin{proof}
Fix a bijection \( \pi \colon \R \to \R^\omega \), and for each \( m \in \omega \) let \( \pi_m \colon \R \to \R \) be defined as \( \pi_m ( x ) = \pi ( x )_m \). 
If \( Y \subseteq \R \) and \( f \colon \omega \to Y \) is surjective, then \( \tilde{Y} \), the closure of \( Y \) under the \( \pi_m \)s, is also countable, as 
\[
 \tilde{f} \colon {}^{< \omega } \omega \times \omega \to \tilde{Y} \qquad ( \seq{ n_0 , \dots , n_k } , m ) \mapsto \pi_{n_k} \circ \cdots \circ \pi_{n_0}\circ f ( m ) 
\]
is surjective.
By \( \FL \) let \( ( Y_n )_{ n \in \omega} \) be a sequence of countable sets such that \( \R = \bigcup_n Y_n \), and without loss of generality we may assume that each \( Y_n \) is closed under every \( \pi_m \).
Then let \( X_n = Y_n \setminus \bigcup_{m < n} Y_m \) for each \( n \in \omega \). 
If necessary, we can pass to a subsequence to get them to be nonempty. 
 
We claim that no infinite subsequence of \( ( X_n )_{ n \in \omega } \) has a choice function. 
Otherwise there would be an infinite sequence \( ( x_n )_{ n \in \omega } \in \R^\omega \) whose range intersects infinitely many \( X_n \)s. 
Let \( x \in \R \) be such that \( \pi ( x) = ( x_n )_{n \in \omega } \).
Then \( x \in X_k \subseteq Y_k \) for some \( k \in \omega \), and hence 
\[
\FORALL{n \in \omega } \left ( x_n = \pi_n ( x ) \in Y_k \subseteq X_0 \cup \dots \cup X_k \right )
\] 
as \( Y_k \) is closed under the \( \pi_n \)s.
But this contradicts the assumption that \( \setof{ x_n }{ n \in \omega } \) intersects infinitely many \( X_n \)s.
\end{proof}


\begin{proof}[Proof of Proposition~\ref{prop:FefermanLevy}]
Fix \( ( X_n )_{ n \in \omega } \) as in Lemma~\ref{lem:FefermanLevy}. 
Let \( A \subseteq \R \) such that \( \DC ( A ) \) holds, and let \( I = \setof{ n \in \omega }{ A \cap X_n \neq \emptyset} \).
If \( I \) is infinite then, by part~\ref{prop:basicpropertiesofDC&AComega-d} of Proposition~\ref{prop:basicpropertiesofDC&AComega}, \( \DC ( A ) \) would imply the existence of a choice function for the family \( \setof{ A \cap X_n}{ n \in I } \), which is, in particular, a choice function for \( \setof{ X_n }{ n \in I } \), against Lemma~\ref{lem:FefermanLevy}. 
So \( I \) must be finite, that is \( A \subseteq X_0 \cup \dots \cup X_k \) for some \( k \).
But the finite union of countable sets is countable, so \( A \) is countable.
\end{proof}

\subsection{Definability of the counterexample}
Theorem~\ref{th:Lorenzo} shows that the statement~\eqref{eq:mainquestion2} is consistent with \( \ZF \), that is to say, it is consistent that there is a set \( A \subseteq \R \) such that \( \DC ( A ) \) and \( \neg \AComega ( A ) \).
The set \( A \) constructed in the proof of Theorem~\ref{th:Lorenzo} is a set of Cohen reals, so it is not ordinal definable.
But what is the possible descriptive complexity of a set \( A \) as above?

By part~\ref{lem:propertiesofA-c} of Proposition~\ref{lem:propertiesofA}, the set \( A \) cannot contain a perfect set.
Recall that a set has the perfect set property if it is either countable or else it contains a perfect subset.
Assuming \( \AComega ( \R ) \), every Borel set has the perfect-set property.
In a choice-less context the situation becomes murky.
Assuming \( \FL \), every set of reals is \( \Fsigmasigma \) (i.e.~countable union of \( \Fsigma \) sets), and by taking complements it is also \( \Gdeltadelta \) (i.e.~countable intersection of \( \Gdelta \) sets), so every set is \( \bDelta^{0}_{4} \), as \( \Fsigma = \bSigma^{0}_{2} \subset \bPi^{0}_{3} \), and hence \( \Fsigmasigma \subseteq \bSigma^{0}_{4} \).
Therefore \( \FL \) collapses the Borel hierarchy at level \( 4 \).
Moreover \( \FL \) implies that there is an uncountable set in \( \bPi^{0}_{3} \) without a perfect subset~\cite{Miller:2009xy}*{Theorem 1.3}.

On the other hand A.~Miller has shown in \( \ZF \) that \( \bSigma^{0}_{3} \neq \bPi^{0}_{3} \)~\cite{Miller:2008ys}*{Theorem 2.1}, and that every set in \( \bSigma^{0}_{3} \) has the perfect-set property \cite{Miller:2009xy}*{Theorem 1.2}.

Recall that a subset of \( \R \) is \( \bPi^{1}_{n} \) if it is the complement of a \( \bSigma^{1}_{n} \), and it is 
\( \bSigma^{1}_{n} \) if it is the projection of a \( \bPi^{1}_{n - 1} \) set \( C \subseteq \R \times \R \), where \( \bPi^{1}_{0} \) is the collection of closed sets.
The lightface hierarchy \( \varSigma^1_n , \varPi^1_n \) is obtained by replacing \( \bPi^{1}_{0} \) with \( \varPi^1_0 \), the collection of recursively-closed sets, see~\cite{Kanamori:2003zk}*{Ch.~3, \S 12}.
Working in \( \ZF \), every \( \bSigma^{1}_{1} \) set has the perfect set property, and by a theorem of Mansfield and Solovay (see~\cite{Kanamori:2003zk}*{Ch.~3, Corollary 14.9}) every \( \bSigma^{1}_{2} \) set is either well-orderable, being included in \( \Ll [ a ] \) for some real \( a \), or else it contains a perfect set.

By part~\ref{lem:propertiesofA-c} of Lemma~\ref{lem:propertiesofA} we obtain:

\begin{corollary}
If \( A \subseteq \R \) is \( \bSigma^{0}_{3} \) or \( \bSigma^{1}_{2} \) and \( \DC ( A ) \) holds, then \( \AComega ( A ) \).
\end{corollary}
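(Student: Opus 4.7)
The plan is to handle the two cases separately, by reducing each to Lemma~\ref{lem:propertiesofA}\ref{lem:propertiesofA-c}, which says that if \( A \subseteq \R \) contains a nonempty perfect set and \( \DC ( A ) \) holds, then \( \AComega ( A ) \) holds. Thus in both cases it suffices to prove the dichotomy: either \( A \) contains a perfect set (and we invoke the lemma), or \( A \) is well-orderable (and then \( \AC ( A ) \) holds trivially, which implies \( \AComega ( A ) \)).

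First I would treat the \( \bSigma^{0}_{3} \) case. By Miller's theorem \cite{Miller:2009xy}*{Theorem 1.2}, which is provable in \( \ZF \), every \( \bSigma^{0}_{3} \) set of reals has the perfect set property, i.e.~is either countable or contains a nonempty perfect set. If \( A \) is countable then it is well-orderable and so \( \AC ( A ) \), hence \( \AComega ( A ) \), holds outright. Otherwise, \( A \) contains a nonempty perfect set and Lemma~\ref{lem:propertiesofA}\ref{lem:propertiesofA-c} applies to deliver \( \AComega ( A ) \).

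For the \( \bSigma^{1}_{2} \) case I would invoke the Mansfield--Solovay theorem (stated in the paragraph just above the corollary, with the reference \cite{Kanamori:2003zk}*{Ch.~3, Corollary 14.9}): every \( \bSigma^{1}_{2} \) set of reals either contains a nonempty perfect set, or else is contained in \( \Ll [ a ] \) for some real parameter \( a \). In the first subcase Lemma~\ref{lem:propertiesofA}\ref{lem:propertiesofA-c} again yields \( \AComega ( A ) \). In the second subcase, \( \Ll [ a ] \) carries a definable well-ordering, so its subset \( A \) is well-orderable, hence \( \AC ( A ) \), and in particular \( \AComega ( A ) \), holds.

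There is no real obstacle here; both cases are an immediate combination of a classical \( \ZF \)-provable perfect-set-property style dichotomy with the already established Lemma~\ref{lem:propertiesofA}\ref{lem:propertiesofA-c}. The only delicate point is making sure the dichotomy in the \( \bSigma^{1}_{2} \) case is applied correctly: in \( \ZF \) alone, without \( \AComega ( \R ) \), ``\( A \subseteq \Ll [ a ] \)'' is precisely what is needed, since it provides a definable well-ordering on \( A \) without any appeal to countable choice.
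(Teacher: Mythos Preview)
Your proposal is correct and follows exactly the route the paper takes: the corollary is stated immediately after the relevant dichotomies (Miller's \( \ZF \)-theorem for \( \bSigma^{0}_{3} \) and Mansfield--Solovay for \( \bSigma^{1}_{2} \)) and is deduced directly from Lemma~\ref{lem:propertiesofA}\ref{lem:propertiesofA-c}, with the well-orderable alternative handled just as you do. Your write-up simply spells out what the paper leaves implicit.
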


We conclude with a question.

\begin{question}
Is it consistent with \( \ZF \) that there is a \( \varPi^{1}_{2} \) set \( A \subseteq \R \) such that \( \DC ( A ) \) and \( \neg \AComega ( A ) \)?
\end{question}

\begin{bibdiv}
\begin{biblist}*{labels={alphabetic}}

\bib{Andretta:2022aa}{article}{
 author={Andretta, Alessandro},
 author={Motto Ros, Luca},
 title={Souslin quasi-orders and bi-embeddability of uncountable
 structures},
 journal={Mem. Amer. Math. Soc.},
 volume={277},
 date={2022},
 number={1365},
 pages={vii+189},
 issn={0065-9266},
 isbn={978-1-4704-5273-5}
}

\bib{Jech:1973gf}{book}{
 author={Jech, Thomas},
 title={The axiom of choice},
 series={Studies in Logic and the Foundations of Mathematics, Vol. 75},
 publisher={North-Holland Publishing Co., Amsterdam-London; American
 Elsevier Publishing Co., Inc., New York},
 date={1973}
}

\bib{Jech:2003pd}{book}{
 author={Jech, Thomas},
 title={Set theory},
 series={Springer Monographs in Mathematics},
 note={The third millennium edition, revised and expanded},
 publisher={Springer-Verlag, Berlin},
 date={2003},
 isbn={3-540-44085-2}
}

\bib{Kanamori:2003zk}{book}{
 author={Kanamori, Akihiro},
 title={The higher infinite},
 series={Springer Monographs in Mathematics},
 edition={2},
 note={Large cardinals in set theory from their beginnings;
 Paperback reprint of the 2003 edition},
 publisher={Springer-Verlag, Berlin},
 date={2009},
 isbn={978-3-540-88866-6}
}

\bib{Karagila:2019aa}{article}{
 author={Karagila, Asaf},
 title={Iterating symmetric extensions},
 journal={J. Symb. Log.},
 volume={84},
 date={2019},
 number={1},
 pages={123--159},
 issn={0022-4812}
}

\bib{KaragilaarXiv}{arXiv}{
 author={Karagila, Asaf},
 title={Iterated failures of choice},
 date={2021},
 eprint={1911.09285},
 archiveprefix={arXiv},
 primaryclass={math.LO}
}

\bib{Miller:2008ys}{article}{
 author={Miller, Arnold W.},
 title={Long Borel hierarchies},
 journal={MLQ Math. Log. Q.},
 volume={54},
 date={2008},
 number={3},
 pages={307--322},
 issn={0942-5616}
}

\bib{Miller:2009xy}{article}{
 author={Miller, Arnold W.},
 title={A Dedekind finite Borel set},
 journal={Arch. Math. Logic},
 volume={50},
 date={2011},
 number={1-2},
 pages={1--17},
 issn={0933-5846}
}

\bib{Monro:1974aa}{article}{
 author={Monro, G. P.},
 title={The cardinal equation $2m=m$},
 journal={Colloq. Math.},
 volume={29},
 date={1974},
 pages={1--5},
 issn={0010-1354}
}

\bib{Sageev:1975ff}{article}{
 author={Sageev, Gershon},
 title={An independence result concerning the axiom of choice},
 journal={Ann. Math. Logic},
 volume={8},
 date={1975},
 pages={1--184},
 issn={0003-4843}
}
 
\bib{MR296671}{book}{
 author={Semadeni, Zbigniew},
 title={Banach spaces of continuous functions. Vol. I},
 series={Monografie Matematyczne [Mathematical Monographs], Tom 55},
 publisher={PWN---Polish Scientific Publishers, Warsaw},
 date={1971},
 pages={584)}
}

\end{biblist}
\end{bibdiv}

\end{document}